\documentclass[a4paper,10pt,reqno]{amsart}
\usepackage{thmtools,enumerate,graphicx,xcolor}
\usepackage{etex,float}
\usepackage{ucs}
\usepackage{amsmath, amssymb, amscd, tabu, diagbox}
\usepackage{sseq}
\usepackage[latin1]{inputenc}
\usepackage{amsmath}
\usepackage{amssymb}
\usepackage{epsfig}
\usepackage{psfrag}

\makeatletter
\def\@tocline#1#2#3#4#5#6#7{\relax
  \ifnum #1>\c@tocdepth % then omit
  \else
    \par \addpenalty\@secpenalty\addvspace{#2}%
    \begingroup \hyphenpenalty\@M
    \@ifempty{#4}{%
      \@tempdima\csname r@tocindent\number#1\endcsname\relax
    }{%
      \@tempdima#4\relax
    }%
    \parindent\z@ \leftskip#3\relax \advance\leftskip\@tempdima\relax
    \rightskip\@pnumwidth plus4em \parfillskip-\@pnumwidth
    #5\leavevmode\hskip-\@tempdima
      \ifcase #1
       \or\or \hskip 2em \or \hskip 2em \else \hskip 3em \fi%
      #6\nobreak\relax
    \hfill\hbox to\@pnumwidth{\@tocpagenum{#7}}\par
    \nobreak
    \endgroup
  \fi}
\makeatother

\usepackage[all]{xy}

\makeatletter
\newcommand{\LeftEqNo}{\let\veqno\@@leqno}
\makeatother
\newcommand{\RH}[1]{\makebox[0.7em]{#1}}

\newcommand{\Pfeil}[1]{\makebox[0pt][l]{$\:%
\setlength{\unitlength}{5mm}%
\begin{picture}(0,0)\put(-0.2,0.18){\vector(1,3.5){#1}}\end{picture}$}}%
% Graphic inclusion
\newcommand{\khoca}{\texttt{khoca}}
\newlength{\lowerhalftmp}
\newcommand{\lowerhalfx}[1]{\settoheight{\lowerhalftmp}{#1}\addtolength{\lowerhalftmp}{-1.2ex}\raisebox{-.5\lowerhalftmp}{#1}}
\newcommand{\avcfig}[2][]{\lowerhalfx{\includegraphics[#1]{figs/#2.pdf}}}

\newcommand{\xfixed}[2][1cm]{\xrightarrow{\makebox[#1]{$\scriptstyle #2$}}}

% misc
\usepackage{microtype}

% Hyperref configuration
\usepackage{hyperref}
\usepackage[nameinlink]{cleveref}
\crefname{subsection}{subsection}{subsections}
\Crefname{subsection}{Subsection}{Subsections}
\definecolor{darkblue}{RGB}{0,0,96}
\definecolor{gray}{RGB}{127,127,127}
\definecolor{darkred}{RGB}{160,0,0}
\definecolor{lightyellow}{RGB}{255,255,128}
\hypersetup{colorlinks={true},linkcolor={darkblue},citecolor={darkblue},filecolor={darkblue},urlcolor={darkblue},pdfauthor={Lukas Lewark and Andrew Lobb},pdftitle={New quantum obstructions to sliceness}}

%
%
%
%

% Mathoperators
\DeclareMathOperator{\Gr}{Gr}
\DeclareMathOperator{\mult}{mult}

% Commands for bibliography style myamsalpha.bst
\newcommand{\qua}{\hskip 0.75em plus 0.15em \ignorespaces}
\def\arxiv#1{\relax\ifhmode\unskip\qua\fi
    \href{http://arxiv.org/abs/#1}%
{\tt arXiv:\penalty -100\unskip#1}}    

\def\MR#1{\relax\ifhmode\unskip\qua\fi
    \href{http://www.ams.org/mathscinet-getitem?mr=#1}{MR#1}}
\def\xox#1{\csname xx#1\endcsname}

% <--

\newtheorem{theorem}{Theorem}[section]
\newtheorem{definition}[theorem]{Definition}
\newtheorem{lemma}[theorem]{Lemma}
\newtheorem{prop}[theorem]{Proposition}
\newtheorem{proposition}[theorem]{Proposition}
\newtheorem{corollary}[theorem]{Corollary}

\theoremstyle{definition}
\newtheorem{remark}[theorem]{Remark}
\newtheorem{question}[theorem]{Question}

\renewcommand{\epsilon}{\varepsilon}

\hyphenation{com-pact-i-fi-cation}
\hyphenation{dim-en-sional}
\hyphenation{Uhlen-beck}
\hyphenation{mon-o-pole}
\hyphenation{man-i-fold}
\hyphenation{mo-no-pole}
\hyphenation{re-du-ci-ble}
\hyphenation{re-du-ci-bles}
\hyphenation{geo-me-tric}
\hyphenation{complex-geo-me-tric}
\hyphenation{de-cou-page}
\hyphenation{Kron-hei-mer}
\DeclareMathAlphabet{\mathpzc}{OT1}{pzc}{m}{it}
\usepackage{mathrsfs}

\newcommand{\Z}{\mathbb{Z}}
\newcommand{\C}{\mathbb{C}}

\newcommand{\R}{\mathbb{R}}

\renewcommand{\qed}{$\hfill \square$ \smallskip \\ }

\newcommand{\tensor}{\otimes}

\newcommand{\hash}{\#}

\renewcommand{\sl}{\mathfrak{sl}}

\newcommand{\F}{\mathcal{F}}

\newcommand{\cC}{\mathcal{C}}

\newcommand{\redH}{\widetilde{H}}
\newcommand{\redC}{\widetilde{C}}
\newcommand{\reds}{\widetilde{s}}

\newcommand{\eqva}{U(n)}

\begin{document}
\thispagestyle{empty}
\title{New quantum obstructions to sliceness}
\author{Lukas Lewark}
\address{Mathematisches Institut\\
Universit\"at Bern\\
Sidlerstrasse 5\\
3012 Bern \\
Switzerland}
\email{lukas@lewark.de}
\urladdr{http://www.lewark.de/lukas/}
\author{Andrew Lobb} 
\address{Mathematical Sciences\\
Durham University\\
South Road\\
DH1 3LE Durham\\
UK}
\email{andrew.lobb@durham.ac.uk}
\urladdr{http://www.maths.dur.ac.uk/users/andrew.lobb/}

\begin {abstract}
It is well-known that generic perturbations of the complex Frobenius algebra used to define Khovanov cohomology each give rise to Rasmussen's concordance invariant $s$.  This gives a concordance homomorphism to the integers and a strong lower bound on the smooth slice genus of a knot.  Similar behavior has been observed in $\sl(n)$ Khovanov-Rozansky cohomology, where a perturbation gives rise to the concordance homomorphisms $s_n$ for each $n \geq 2$, and where we have $s_2 = s$.

We demonstrate that $s_n$ for $n \geq 3$ does not in fact arise generically, and that varying the chosen perturbation gives rise both to new concordance homomorphisms as well as to new sliceness obstructions that are not equivalent to concordance homomorphisms.

\end {abstract}

\subjclass[2010]{57M25}
\maketitle

\tableofcontents

\section{Introduction}
\label{sec:intro}

\subsection{History}
\label{subsec:history}

In \cite{KR1} Khovanov and Rozansky gave a way of associating, for each $n \geq 2$, a finitely generated bigraded complex vector space to a knot $K$.  It arises as the cohomology of a cochain complex

\[ \cdots \rightarrow C^{i-1,j}_{x^n} (D) \rightarrow C^{i,j}_{x^n} (D) \rightarrow C^{i+1,j}_{x^n} (D) \rightarrow \cdots \]

\noindent defined from any diagram $D$ of $K$ which is invariant under Reidemeister moves up to cochain homotopy equivalence.  We write this vector space as

\[ H^{i,j}_{x^n}(K) {\rm ,} \]

\noindent and refer to $i \in \Z$ as the \emph{cohomological} grading and $j \in \Z$ as the \emph{quantum} grading.  This bigraded vector space exhibits as its graded Euler characteristic

\[ \sum_{i,j} (-1)^i q^j {\rm dim}_\C H^{i,j}_{x^n}(K) \]

\noindent the Reshetikhin-Turaev polynomial of $K$ associated to the fundamental irreducible representation of $\sl(n)$.

The reason for the subscript $x^n$ in the notation is that in the definition of $H$ a choice is made of a polynomial $w \in \C[x]$.  Khovanov-Rozansky took $w=x^{n+1}$ as their polynomial but what is important for the definition is really the first derivative of $w$, and that only up to multiplication by a non-zero complex number.  We record this renormalized first derivative in the subscript.

In fact there is a cohomology theory associated to each degree $n$ monic polynomial $\partial w \in \C[x]$ (we write $\partial w$ to remind of us of the connection with the first derivative) which we write as

\[ H^i_{\partial w}(K) {\rm .}\]

\noindent We refer to $\partial w$ as the \emph{potential} of the cohomology theory.  Note that the cohomology theory $H^i_{\partial w}$ keeps a cohomological grading but does not necessarily retain a quantum grading.  However, for any choice of $\partial w$ there is at least a \emph{quantum filtration} on the cohomology:

\[ \cdots \subseteq \F^{j-1}H^i_{\partial w}(K) \subseteq \F^{j}H^i_{\partial w}(K) \subseteq \F^{j+1}H^i_{\partial w}(K) \subseteq \cdots {\rm ,} \]

\noindent arising from a filtration on the cochain complex associated to a diagram

\[ \cdots \rightarrow \F^{j}C^{i-1}_{\partial w} (D) \rightarrow \F^{j}C^{i}_{\partial w} (D) \rightarrow \F^{j}C^{i+1}_{\partial w} (D) \rightarrow \cdots {\rm .}\]

\noindent The filtered cochain-homotopy type of the cochain complex was shown to be an invariant of $K$ by Wu \cite{Wu1}.

We write the bigraded vector space associated to the filtration as

\[ \Gr^j H^i (K) =  \F^{j}H^i_{\partial w}(K) / \F^{j-1}H^i_{\partial w}(K) \rm{.} \]

Gornik was the first to consider a choice of $\partial w$ different from $x^n$, he took $\partial w = x^n -1$.  In \cite{G}, Gornik showed that for any diagram $D$ of a knot, $H_{x^n -1}(D)$ is of dimension $n$ and is supported in cohomological degree $0$ and furthermore he observed that there is spectral sequence with $E_1$ page isomorphic to $H^{i,j}_{x^n}(K)$ and abutting to $\Gr^j H^{i}_{x^n -1}(D)$.  Given a diagram $D$, the $E_0$ page of the spectral sequence can in fact be identified with the standard Khovanov-Rozansky cochain complex

\[ \F^{j}C^{i}_{\partial w} (D) / \F^{j-1}C^{i}_{\partial w} (D) \equiv C^{i,j}_{x^n} (D) {\rm .} \]

This work of Gornik's can be considered a generalization of Lee's result in Khovanov cohomology \cite{Lee} which essentially proved this for the case $n=2$ (in work that predated the definition of Khovanov-Rozansky cohomology).

In works by the second author \cite{Lobb1} and by Wu \cite{Wu1}, this result of Gornik's was generalized to the case where $\partial w$ has $n$ distinct roots.  Furthermore the quantum gradings on the $E_\infty$ pages of the associated spectral sequences were shown to give rise to lower bounds on the smooth slice genus of a knot.

These results should be thought of as a generalization of Rasmussen's seminal work \cite{Ras1}.  This derived from Khovanov cohomology a combinatorial knot invariant $s(K) \in \Z$ and an associated lower bound $|s(K)|$ on the slice genus sufficiently strong to reprove Milnor's conjecture on the slice genus of torus knots (our normalization of $s$ differs from Rasmussen's).  We summarize:

\begin{theorem}[Gornik, Lobb, Wu]
\label{GLWthm}
Suppose $\partial w \in \C[x]$ is a degree $n$ polynomial which is a product of distinct linear factors and $K$ is a knot.  Then there is a spectral sequence, itself a knot invariant, with $E_1$ page $H^{i,j}_{x^n}(K)$ and abutting to $\Gr^j H^{i}_{\partial w}(K)$.

Furthermore $\Gr^j H^{i}_{\partial w}(K)$ is supported in cohomological degree $i=0$ and is of rank $n$.  We can write $j_1(K) \leq j_2(K) \leq \cdots \leq j_n(K)$ so that $\Gr^j H^{i}_{\partial w}(K)$ is isomorphic to the direct sum of $n$ $1$-dimensional vector spaces supported in bidegrees $(0,j_r)$.

If $K_0$ and $K_1$ are two knots connected by a connected knot cobordism of genus $g$ then

\[ | j_r(K_0) - j_r(K_1) | \leq 2(n-1)g \, \, \, {\rm for} \, \, \, 1 \leq r \leq n {\rm .} \]

\noindent It follows from this and knowing the cohomology of the unknot that we must have

\[ g_*(K) \geq \frac{1}{2(n-1)} | j_r(K) - 2r + n + 1 | \, \, \, {\rm for} \, \, \, 1 \leq r \leq n {\rm ,} \]

\noindent where we have written $g_*(K)$ for the slice genus of $K$.
\end{theorem}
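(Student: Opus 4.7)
The plan is to establish the three assertions in turn: existence and invariance of the spectral sequence, the structure of the abutment, and the cobordism inequality.

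The spectral sequence arises directly from the quantum filtration $\F^* C^*_{\partial w}(D)$ on the cochain complex. Standard homological algebra yields an associated spectral sequence, and the identification $\F^j C^i_{\partial w}(D)/\F^{j-1} C^i_{\partial w}(D) \equiv C^{i,j}_{x^n}(D)$ already noted in the excerpt shows that its $E_0$ page is the standard Khovanov-Rozansky complex. Taking cohomology gives $E_1^{i,j} = H^{i,j}_{x^n}(K)$, and the bounded, finitely generated nature of the filtration in each cohomological degree for a finite diagram guarantees convergence to $\Gr^j H^i_{\partial w}(K)$. Wu's filtered chain homotopy invariance \cite{Wu1} then promotes the spectral sequence to a knot invariant from the $E_1$ page onward.

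To compute $\Gr^* H^*_{\partial w}(K)$, I would first verify the unknot case: $H^*_{\partial w}(\text{unknot}) \cong \C[x]/(\partial w)$, and since $\partial w = \prod_{r=1}^n (x-\lambda_r)$ has distinct roots, the Chinese remainder theorem splits this as $\bigoplus_{r=1}^n \C\cdot e_r$ via the primitive idempotents, yielding rank $n$ concentrated in cohomological degree $0$. For general $K$, the strategy of Gornik and Wu is to construct $n$ canonical cycles in $C^*_{\partial w}(D)$, one for each root $\lambda_r$, obtained by labelling every Seifert circle by the idempotent $e_r$. One then checks that these are cycles, that for a connected knot only the $n$ monochromatic labellings survive as cohomology classes (non-monochromatic labellings are killed by consistency at the crossings), and that they are linearly independent. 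Since all these classes sit in cohomological degree $0$ and the spectral sequence is finite-dimensional, one reads off the quantum filtration levels $j_r(K)$ as the filtrations of these classes.

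The cobordism inequality exploits functoriality: a smoothly embedded, connected, oriented cobordism $\Sigma$ of genus $g$ between $K_0$ and $K_1$ induces a chain map $\phi_\Sigma \colon C^*_{\partial w}(D_0)\to C^*_{\partial w}(D_1)$, well-defined up to filtered homotopy, whose filtered quantum degree is $(n-1)\chi(\Sigma) = -2(n-1)g$. Because $\phi_\Sigma$ respects the idempotent decomposition of $\C[x]/(\partial w)$, it sends the $r$-th canonical generator for $K_0$ to a nonzero scalar multiple of the $r$-th canonical generator for $K_1$. Applying the same argument to the reversed cobordism produces the two-sided estimate $|j_r(K_0)-j_r(K_1)| \leq 2(n-1)g$. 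The slice genus bound follows by taking $K_1$ to be the unknot (for which the computation above gives $j_r(\text{unknot}) = 2r - n - 1$) and letting $\Sigma$ be a minimal-genus slice cobordism.

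The main obstacle is the rank-$n$, cohomological-degree-$0$ concentration of $\Gr^* H^*_{\partial w}(K)$ for general knots. There is no quick Mayer-Vietoris or skein argument that will force this; one has to exhibit the $n$ explicit idempotent cycles and verify their linear independence and spanning, typically through a careful analysis at each crossing using the matrix factorization (or foam) description of the theory and the way idempotents of $\C[x]/(\partial w)$ split off the local pictures. A parallel technical point is ensuring that cobordism-induced maps respect the idempotent decomposition on the nose, which is what guarantees the inequality holds for each $r$ separately rather than only for the filtered complex as a whole.
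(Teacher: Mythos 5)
The paper does not prove this statement itself---it is a summary attributed to Gornik, Lobb, and Wu---but the machinery recalled before \Cref{thm:nonstdredSS} and \Cref{thm:redconc} (Gornik's admissible decorations, his cocycle representatives, Gauss elimination, and filtered maps induced by handle attachments) is exactly the skeleton you describe, so the overall architecture matches. Two places warrant more care. Your claim that ``non-monochromatic labellings are killed by consistency at the crossings'' is not Gornik's mechanism: the cohomology is computed by Gauss elimination over the entire Khovanov--Rozansky cube, pairing off cancelling elements of $h_{\partial w}(\Gamma)$ across all resolutions $\Gamma$, and the $n$ idempotent-labelled cocycles in the oriented resolution are what remains after all cancellations. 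More substantively, controlling where $\phi_\Sigma$ sends the Gornik cocycle $g_\alpha$ bounds the filtration level of that particular cocycle representative, but the $j_r$ are defined by the support of $\Gr H_{\partial w}$, and these can genuinely differ (compare \Cref{prop:relation red unred}, where the discrepancy can be as large as $n-1$). The way to close this gap is to observe that $\phi_\Sigma$ is a filtered isomorphism on cohomology of degree $(1-n)\chi(\Sigma) = 2(n-1)g$, so $\dim\F^j H_{\partial w}(K_0) \leq \dim\F^{j + 2(n-1)g}H_{\partial w}(K_1)$; this dimension count, together with its mirror, yields $|j_r(K_0) - j_r(K_1)| \leq 2(n-1)g$ directly. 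Also, your filtered degree $(n-1)\chi(\Sigma) = -2(n-1)g$ has the opposite sign from the paper's $(1-n)\chi(M)$; make sure this is consistent with your convention for ``filtered of degree $d$''.
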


The corresponding result in Khovanov cohomology, which can be thought of as the case $n=2$ of Khovanov-Rozansky cohomology, admits a much neater formulation than that of \Cref{GLWthm}.  This is because of work by Mackaay, Turner, and Vaz \cite{MTV} who proved the following:

\begin{theorem}[Mackaay, Turner, Vaz]
\label{MTVthm}
Suppose we are in the situation of \Cref{GLWthm} with $n = 2$.  Then we have that $j_1(K) = 2s(K) - 1$ and $j_2(K) = 2s(K) +1$.
\end{theorem}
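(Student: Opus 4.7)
The plan is to reduce the general case to Rasmussen's original Lee perturbation $\partial w = x^2 - 1$, where the result follows essentially from the construction of the $s$-invariant. I exploit that the parameter space $B = \{(a, b) \in \C^2 : a \neq b\}$ of admissible perturbations $\partial w = (x-a)(x-b)$ is path-connected, and argue that the filtration degrees $j_1(K), j_2(K)$ are constant on $B$, forcing them to agree with their values in the Lee case.

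Fix a diagram $D$ of $K$. The underlying filtered vector space of $C_{\partial w}(D)$---the standard Khovanov complex with its quantum filtration---is the same for every $(a, b) \in B$; only the differential $d_{a,b}$ varies, polynomially in $(a, b)$. By \Cref{GLWthm}, the total cohomology $H^0_{\partial w}(K)$ is two-dimensional and $H^i_{\partial w}(K)$ vanishes for $i \neq 0$, uniformly on $B$. The spectral sequence of \Cref{GLWthm} has $E_1$ page equal to Khovanov cohomology $H^{i,j}_{x^2}(K)$ (independent of $(a,b)$) and converges to $\bigoplus_j \Gr^j H^0_{\partial w}(K)$.

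Next I would show that each function $(a, b) \mapsto \dim \Gr^j H^0_{\partial w}(K)$ is upper-semicontinuous on $B$. Inductively through the pages of the spectral sequence: $\dim E_1^{i,j}$ is constant, and at each subsequent page the differential $d_r$ is a linear map with entries polynomial in $(a, b)$, so its rank is lower-semicontinuous, making both kernel and cokernel dimensions upper-semicontinuous; these properties propagate through the finitely many pages until the sequence collapses. Now $\sum_j \dim \Gr^j H^0_{\partial w}(K) = 2$ is constant on $B$, and an elementary lemma gives that upper-semicontinuous integer-valued functions on a connected space with constant sum must each be individually constant---a jump up in one would force a jump down in another, violating the latter's upper-semicontinuity. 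Therefore $\{j_1(K), j_2(K)\}$ is constant on $B$, equal to its value at the Lee point $(a,b) = (1, -1)$, which by Rasmussen's original computation is $\{2s(K) - 1, 2s(K) + 1\}$.

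The main obstacle is rigorously verifying the upper-semicontinuity of successive subquotients in a family of filtered complexes: while morally a generic-flatness consequence, the subquotient vector spaces themselves vary with $(a, b)$, so one must carefully articulate what the $r$-th page of the spectral sequence means as a family over $B$. An alternative route, closer to what I suspect is Mackaay--Turner--Vaz's original strategy, would be to work with the universal deformation over $\C[a, b][(a-b)^{-1}]$ and exploit rigidity of the filtered cohomology over this base, thereby bypassing the pointwise semicontinuity analysis altogether.
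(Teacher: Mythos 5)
This theorem is stated in the paper as a cited result of Mackaay, Turner, and Vaz; the paper does not supply a proof of its own, so there is nothing to compare against directly. That said, the paper does contain, further on, the standard argument that makes the theorem almost immediate, and your route is a genuinely different (and, as you acknowledge, incomplete) one.

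The clean argument is the one the paper hints at in \Cref{subsec:comparison}: by \Cref{prop:linear_equivalence}, the substitution $\partial w_2(x) = a^{-n}\partial w_1(ax+b)$ (with $a\neq 0$) induces a \emph{filtered chain homotopy equivalence} $C_{\partial w_1}(D)\to C_{\partial w_2}(D)$. For $n=2$, any monic quadratic with distinct roots $\alpha\neq\beta$ can be written as $a^{-2}((ax+b)^2-1)$ by taking $a=2/(\alpha-\beta)$ and $b=-(\alpha+\beta)/(\alpha-\beta)$. So every degree-$2$ separable potential is KR-equivalent to $x^2-1$ (``all degree $2$ potentials are Gornik''), and the filtered homotopy type of $C_{\partial w}(D)$ is the Lee complex. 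The theorem then reduces to Rasmussen's identification of the surviving quantum gradings of the Lee spectral sequence with $s(K)\pm 1$. This buys you an \emph{isomorphism} of filtered complexes, not merely an equality of two integers, with no deformation-theoretic machinery at all.

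Your semicontinuity plan is the right shape of idea but the gap you flag in your last paragraph is a real one, not a formality, and I do not think the argument as written closes. The dimension of a filtration subquotient $\Gr^j H^0$ in a family of filtered complexes over a base is \emph{not} obviously semicontinuous: $\dim\Gr^j H^0 = \dim\F^j H^0 - \dim\F^{j-1}H^0$, and while each $\dim\F^j H^0$ can be massaged into a sum of ranks and corank-of-rank terms that are individually semicontinuous, the signs do not cooperate to give semicontinuity of the difference. Your inductive ``rank of $d_r$ is lower-semicontinuous'' step also quietly assumes that $E_r$ exists as a bundle over $B$ with $d_r$ a polynomial family of maps between fixed vector spaces, but $E_r$ is a subquotient of $E_1$ that itself varies with the parameter; making this a family in the needed sense is exactly the hard part. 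A rescue is possible (pass to the equivariant complex over $R_2[x]/p$, localize at the discriminant, and use generic flatness to control the fibrewise $E_\infty$ pages), but at that point you are carrying more machinery than the one-line affine substitution requires, and you still need to cite Rasmussen at the Lee point to anchor the constant value. I would recommend replacing the whole argument with the change-of-variables reduction.
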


It follows that in the case $n=2$ varying $\partial w$ among quadratics with two distinct roots does not change the invariant $\Gr^j H^{i}_{\partial w}(K)$, which is always equivalent to Rasmussen's invariant $s(K)$.

For Gornik's prescient choice of $\partial w$ the second author \cite{Lobb3} showed that a similar `neatness' result holds for general $n$.

\begin{theorem}[Lobb]
\label{Lobb3thm}
Taking $\partial w = x^n -1$ we have that $j_r = 2(n-1)s_n(K) - n + 2r - 1$ for some knot invariant $s_n(K)$.  Furthermore, $s_n$ is a homomorphism from the smooth concordance group of knots to the integers $\frac{1}{n-1} \Z$.
\end{theorem}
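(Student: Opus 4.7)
The strategy is to leverage the $\Z/n$-symmetry of the potential $\partial w = x^n - 1$ under $x \mapsto \zeta x$ (where $\zeta = e^{2\pi i/n}$), combined with the $A := \C[x]/(x^n - 1)$-module structure on $H_{x^n-1}(K)$ coming from multiplication by $x$ at a marked point on $K$, to show that the filtration on $H_{x^n-1}(K)$ is forced to be that of a free cyclic filtered $A$-module.

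In this module structure, multiplication by $x$ raises filtration by at most $2$. Since $A$ is semisimple, we get a decomposition into $x$-eigenspaces $H_{x^n-1}(K) = \bigoplus_{k=0}^{n-1} H_k(K)$, where $x$ acts on $H_k$ as $\zeta^k$, and a Gornik-style argument (of the sort already used to prove \Cref{GLWthm}) shows each $H_k(K)$ is $1$-dimensional. Next, since the substitution $x \mapsto \zeta x$ preserves both $\partial w$ and the quantum grading, it induces a filtration-preserving cochain automorphism $\phi_\zeta$ of order $n$; the resulting decomposition into $\phi_\zeta$-eigenspaces $H_{x^n-1}(K) = \bigoplus_k H^{(k)}(K)$ is by filtered subspaces, each $1$-dimensional and hence sitting in a unique filtration degree $d_k$. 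The jumps $\{j_r\}$ of the filtration are then the multiset $\{d_k\}$; and because $\phi_\zeta \circ x = \zeta \cdot x \circ \phi_\zeta$, the operator $x$ is a vector-space bijection $H^{(k)} \to H^{(k+1)}$ (indices cyclic), yielding $d_{k+1} \leq d_k + 2$.

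The crux is to show these inequalities are equalities for $k = 0, \ldots, n-2$, so that the $d_k$ form an arithmetic progression of common difference $2$. I would use two ingredients: the parity of quantum gradings in $H_{x^n}(K)$ (the $E_1$-page of the Gornik spectral sequence) is constant for a knot, so all $d_k$ share a common parity; and the non-degenerate Frobenius pairing on $A$ induces a Poincar\'e-type symmetry of the filtration. Combined with the telescoping identity $\sum_k (d_{k+1} - d_k) = 0$ around the cycle, these constraints should force the $d_k$ to form an arithmetic progression of step $2$ (with the wraparound $x \colon H^{(n-1)} \to H^{(0)}$ accounting for a single drop of $-2(n-1)$). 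I expect this to be the hardest part of the argument, since the weak upper bound $d_{k+1} \leq d_k + 2$ alone permits many non-progression patterns.

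Having obtained the arithmetic progression, define $s_n(K) := (d_0 + n - 1)/(2(n-1))$; the parity observation guarantees $s_n(K) \in \tfrac{1}{n-1}\Z$, and $j_r = 2(n-1)s_n(K) - n + 2r - 1$ as required. The cobordism bound $|j_1(K_0) - j_1(K_1)| \leq 2(n-1) g$ from \Cref{GLWthm} translates into $|s_n(K_0) - s_n(K_1)| \leq g$, proving concordance invariance. For additivity under connected sum, a K\"unneth-type identification $H_{x^n-1}(K_1 \# K_2) \cong H_{x^n-1}(K_1) \otimes_A H_{x^n-1}(K_2)$ --- the tensor product being over $A$ because the connected sum fuses marked points --- yields additivity of $d_0$, hence of $s_n$.
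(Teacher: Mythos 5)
Your framework is right---the $\Z/n$-symmetry $\phi_\zeta\colon x\mapsto\zeta x$ of the potential, the fact that $\phi_\zeta$ induces a degree-preserving automorphism of the cochain complex and hence a filtered automorphism of cohomology, the resulting filtered $1$-dimensional eigenspaces $H^{(k)}$, the cyclic action of $x$ with $d_{k+1}\leq d_k+2$, and the identification of the jump multiset $\{j_r\}$ with $\{d_k\}$. But the step you flag as hardest is genuinely where the argument breaks down, and the two ingredients you propose do not close it. Common parity plus the telescoping identity $\sum_k(d_{k+1}-d_k)=0$ still allows, for example, $d_0=d_1=\cdots=d_{n-1}$ (each difference is $0$, which is even, $\leq 2$, and sums to $0$). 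The ``Poincar\'e-type symmetry'' coming from the Frobenius pairing on $A$ also does not obviously supply what you want: the duality induced by mirroring relates $j_r(K)$ to $-j_{n-r+1}(\overline K)$, which constrains the pair $(K,\overline K)$ but gives no a priori symmetry of $\{j_r(K)\}$ for a single knot; the symmetry $j_r+j_{n-r+1}=\text{const}$ for a fixed $K$ is essentially the statement you are trying to prove.

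The missing ingredient is that the $\phi_\zeta$-eigenvalue of an eigenspace determines its quantum degree modulo $2n$, not just modulo $2$. On the $\Z$-graded cochain groups, $\phi_\zeta$ acts on a monomial of quantum degree $q$ by $\zeta^{(q-c)/2}$ for a fixed constant $c$, so the $\phi_\zeta$-eigenspace decomposition of the cochain complex coincides with the mod-$2n$ quantum grading coming from \Cref{prop:mod} with $m=n$. Hence $d_k\equiv 2k+c\pmod{2n}$, so each cyclic difference $d_{k+1}-d_k$ is both $\leq 2$ and $\equiv 2\pmod{2n}$, i.e.\ lies in $\{2,\,2-2n,\,2-4n,\ldots\}$. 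Telescoping then forces exactly one difference to equal $2-2n$ and the remaining $n-1$ to equal $2$, which is precisely the arithmetic progression with the single wraparound drop you describe. This mod-$2n$ refinement is the content that your parity observation would need to be upgraded to, and it is what actually drives the argument (compare the proof of \Cref{prop:fractionalsn} in the paper, which runs the analogous argument for $x^n-x$ with the $\Z/2(n-1)\Z$-grading). The remainder of your outline (definition of $s_n$, cobordism bounds for concordance invariance, K\"unneth over $A$ for additivity) is plausible once the progression is in hand, though you should check that the filtration on $H_{\partial w}(K_1)\otimes_A H_{\partial w}(K_2)$ is actually the shifted tensor-product filtration---this requires more than just the chain-level K\"unneth isomorphism, and one may instead deduce additivity from the genus-zero cobordisms realizing $K_1\#K_2\#\overline{K_1}\sim K_2$.
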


\noindent As in the case $n=2$, this \namecref{Lobb3thm} shows that $\Gr^j H^{i}_{x^n - 1}(K)$ is bigraded isomorphic to the cohomology of the unknot but shifted in the quantum direction by an integer $2(n-1)s_n(K)$.

Taken with computations in \cite{Wu1, Lobb1}, \Cref{Lobb3thm} demonstrates that $s_n$ is a \emph{slice-torus} invariant (in that it is a concordance homomorphism and its absolute value provides a bound on the smooth slice genus which furthermore is tight for all torus knots).  This establishes shared properties of $s_n$ with Rasmussen's invariant $s=s_2$ and with the invariant $\tau$ arising from knot Floer homology.
The first author showed that these invariants are not all equal \cite{lew2}, and in fact it seems probable that
$\{\tau, s_2, s_3, \ldots\}$ is an infinite family of linearly independent invariants.

However, the $s_n$ do \emph{not} comprise all slice genus bounds obtainable from separable potentials!  In the light of \Cref{MTVthm} it might be guessed that the integers $j_r(K, \partial w)$ of \Cref{GLWthm} are in fact each equivalent to the single integer $s_n(K)$ in the sense of \Cref{Lobb3thm}.  This guess is wrong.

In fact we shall see that for $n \geq 3$, two different degree $n$ separable potentials can induce different filtrations on the unreduced cohomology.  These filtrations do give rise to slice genus lower bounds, but not in general to concordance homomorphisms (see \Cref{q:quasihom}).  However, a separable potential $\partial w$ and a choice of a root $\alpha$ of that potential gives a \emph{reduced} cohomology theory from which one can extract a slice-torus concordance homomorphism.  In this way we shall recover the classical $s_n$ as well as a host of new invariants.

One may compare the results in this paper with the recent results due to Ozsv{\'a}th-Stipsicz-Szab{\'o} \cite{ossz} in which they determine that varying the filtration on Knot Floer homology gives rise to a number of different concordance homomorphisms.  One may consider their family of homomorphisms to be obtained by varying the slope of a linear function, while ours are obtained by varying all coefficients of a degree $n$ polynomial.

A relatively simple knot exhibiting interesting cohomologies for different choice of potential is the knot $10_{125}$.  We invite the reader to spend the next subsection exploring this knot.

\subsection{An appetizing example}
\label{subsec:an_example}
\begin{figure}[ht!]
	\centering
	\includegraphics[width=90mm]{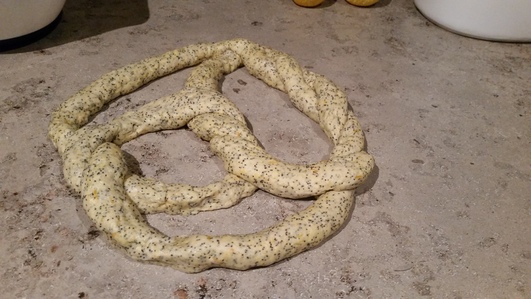}
	\caption{The pretzel knot $P(2,-3,5)$ prior to baking (thanks to Kate Horner and Lauren Scanlon for the image).}
\end{figure}
\begin{figure}[ht]
\avcfig{pretzel}
\caption{A less appetizing diagram of the pretzel knot $P(2,-3,5)$.}
\end{figure}

The pretzel knot $P(2,-3,5)$ appears in the knot table as $10_{125}$, and we shall refer to this knot as $P$ for the remainder of this subsection.

% For some reason, the usual \fcolorbox does not work in the tabu-environment.
\newlength{\oldfboxsep}
\newcommand{\myfcolorbox}[2]{\setlength{\oldfboxsep}{\fboxsep}\setlength{\fboxsep}{0pt}\fbox{\setlength{\fboxsep}{\oldfboxsep}\colorbox{#1}{#2}}}
\newcommand{\Rd}{\myfcolorbox{lightyellow}{1}\,}%
\newcommand{\RR}{\myfcolorbox{lightyellow}{2}\,}%
\newcommand{\Rz}{\phantom{\Rd}}
\begin{table}[ht]%
\centering
\extrarowsep=.57ex%
\begin{tabu}{|[gray]c|*{9}{c|[gray]}}%
\tabucline[gray]-%
\backslashbox{$q$}{$t$} & \RH{$-3$}  & \RH{$-2$}  & \RH{$-1$}  & \RH{0} & \RH{1} & \RH{2} & \RH{3} & \RH{4} & \RH{5} \\\hline
$-9$   &       &       &       &       &       &       &       &       &     1 \\\tabucline[gray]-
$-8$   &       &       &       &       &       &       &       &       &  \Rd  \\\tabucline[gray]-
$-7$   &       &       &       &       &       &       &       &       &       \\\tabucline[gray]-
$-6$   &       &       &       &       &       &       &       &  \Rd\Pfeil{0.7}  &       \\\tabucline[gray]-
$-5$   &       &       &       &       &       &       &     1 &     1 &       \\\tabucline[gray]-
$-4$   &       &       &       &       &       &       & \Rd   &       &       \\\tabucline[gray]-
$-3$   &       &       &       &       &       &     1 &       &       &       \\\tabucline[gray]-
$-2$   &       &       &       &       &       &  \RR \Pfeil{0.7} &       &       &       \\\tabucline[gray]-
$-1$   &       &       &       &       &       &     1 &       &       &       \\\tabucline[gray]-
 0     &       &       &       &       &  \Rd\Pfeil{0.7}  &       &       &       &       \\\tabucline[gray]-
 1     &       &       &       &     2 &     1 &       &       &       &       \\\tabucline[gray]-
 2     &       &       &       &  \RR  &       &       &       &       &       \\\tabucline[gray]-
 3     &       &       &       &     1 &       &       &       &       &       \\\tabucline[gray]-
 4     &       &       &  \Rd\Pfeil{0.7}  &       &       &       &       &       &       \\\tabucline[gray]-
 5     &       &     1 &     1 &       &       &       &       &       &       \\\tabucline[gray]-
 6     &       & \Rd   &       &       &       &       &       &       &       \\\tabucline[gray]-
 7     &       &       &       &       &       &       &       &       &       \\\tabucline[gray]-
 8     &  \Rd\Pfeil{0.7} &       &       &       &       &       &       &       &       \\\tabucline[gray]-
 9     &     1 &       &       &       &       &       &       &       &       \\\tabucline[gray]-
\end{tabu}\\[\baselineskip]
\caption{Unreduced and reduced (the latter printed in yellow boxes) $x^2$-cohomology of the $(2,-3,5)$-pretzel knot.
Non-trivial differentials on the second page of the spectral sequence associated to reduced cohomology with a separable
potential are drawn as arrows.}
\label{fig:pretzel2}%
\end{table}
\begin{table}[ht]%
\centering
\extrarowsep=.57ex%
\begin{tabu}{|[gray]c|*{9}{c|[gray]}}%
\tabucline[gray]-%
\backslashbox{$q$}{$t$} & \RH{$-3$}  & \RH{$-2$}  & \RH{$-1$}  & \RH{0} & \RH{1} & \RH{2} & \RH{3} & \RH{4} & \RH{5} \\\hline
$-18$  &       &       &       &       &       &       &       &       & \Rz 1 \\\tabucline[gray]-
$-16$  &       &       &       &       &       &       &       &       & \Rz 1 \\\tabucline[gray]-
$-14$  &       &       &       &       &       &       & \Rz 1 &       &\Rd  1 \\\tabucline[gray]-
$-12$  &       &       &       &       &       &       & \Rz 1 &       & \Rz 1 \\\tabucline[gray]-
$-10$  &       &       &       &       & \Rz 1 &       &\Rd  1 &       &       \\\tabucline[gray]-
$-8$   &       &       &       &       & \Rz 1 &       & \Rz 1 & \Rz 1 &       \\\tabucline[gray]-
$-6$   &       &       &       &       &\Rd  1 & \Rz 1 &       & \Rd 1 &       \\\tabucline[gray]-
$-4$   &       &       &       &       &       & \Rz 2 &       & \Rz 1 &       \\\tabucline[gray]-
$-2$   &       &       &       & \Rz 2 &       & \RR 2 &       & \Rz 1 &       \\\tabucline[gray]-
 0     &       &       &       & \Rd 3 &       & \Rz 2 &       &       &       \\\tabucline[gray]-
 2     &       & \Rz 1 &       & \RR 3 &       & \Rz 1 &       &       &       \\\tabucline[gray]-
 4     &       & \Rz 1 &       & \Rz 3 &       &       &       &       &       \\\tabucline[gray]-
 6     &       & \Rd 1 &       & \Rz 1 &\Rd  1 &       &       &       &       \\\tabucline[gray]-
 8     &       & \Rz 1 & \Rz 1 &       & \Rz 1 &       &       &       &       \\\tabucline[gray]-
 10    &       &       & \Rd 1 &       & \Rz 1 &       &       &       &       \\\tabucline[gray]-
 12    & \Rz 1 &       & \Rz 1 &       &       &       &       &       &       \\\tabucline[gray]-
 14    &\Rd  1 &       & \Rz 1 &       &       &       &       &       &       \\\tabucline[gray]-
 16    & \Rz 1 &       &       &       &       &       &       &       &       \\\tabucline[gray]-
 18    & \Rz 1 &       &       &       &       &       &       &       &       \\\tabucline[gray]-
\end{tabu}\\[\baselineskip]
\caption{Unreduced and reduced (the latter printed in yellow boxes) $x^5$-cohomology of the $(2,-3,5)$-pretzel knot.}
\label{fig:pretzel5}%
\end{table}

In the \Cref{fig:pretzel2,fig:pretzel5} we give the reduced and unreduced Khovanov-Rozansky cohomologies of $P$ for $n=2$ and $n=5$ (there is no particular reason to choose $5$ over some other integer, but we just want to be explicit).
We encourage the reader to get her hands dirty with a few spectral sequences starting from these cohomologies in order to appreciate something of the phenomena discussed in this paper.

Suppose, for example, that we want to apply \Cref{cor:stdredSS} in order to compute $s_2(P)$ and $s_5(P)$ from the reduced cohomologies.  We are looking for spectral sequences starting from $E_1$-pages the reduced cohomologies of \Cref{fig:pretzel2,fig:pretzel5}, and which have as their final pages 1-dimensional cohomologies supported in cohomological degree $0$.  The differentials on the page $E_i$ increase the cohomological grading by $1$ and decrease the quantum grading by $2i$.

In \Cref{fig:pretzel2} we give the only possible spectral sequence from $E_1 = \redH_{x^2}(K)$ to a 1-dimensional $E_\infty$ page supported in cohomological degree $0$, but in the other Figure the reader will discover two such \emph{a priori} possible $E_\infty$ pages starting from $E_1 = \redH_{x^5}(K)$.

There is better luck to be had in using the unreduced spectral sequences of \Cref{Lobb3thm}.  In the unreduced case the final page is again supported in cohomological degree $0$, but now it is of dimension $n$ (so 2 or 5 in the cases under consideration).  Furthermore, the only non-trivial differentials in the spectral sequence decrease the quantum grading by multiples of $2n$.

From this we can observe that $s_2(P) =1$ and $s_5(P) =1/4$.
The question arises: how far is it accidental that we were unable to compute $s_5(P)$ merely from looking at $\redH_{x^5}(K)$?  It turns out that this failure was inevitable once we determined that $s_5(P)$ is non-integral, as we shall see later in \Cref{subsec:example_redux}.

We ask the reader to return to the unreduced cohomology of \Cref{fig:pretzel5}.  Now look for spectral sequences from this $E_1$ page in which all non-trivial differentials decrease the quantum grading by multiples of $2(n-1) = 8$, and in which the final page is again of dimension $n=5$ supported in cohomological degree $0$.  Whichever spectral sequence of this kind one finds, the final page never has the appearance of a shifted unknot as in \Cref{Lobb3thm}.  Such a spectral sequence would arise from the potential $\partial w = x^5 - x$ (demonstrating for example the non-validity of the \namecref{Lobb3thm} for this new choice of separable potential).

Finally, consider again the reduced cohomology of \Cref{fig:pretzel5}, and look for a spectral sequence in which all non-trivial differentials decrease the quantum grading by multiples of $2(n-1) = 8$ and the final page is of dimension $1$ and is supported in cohomological degree $0$.  There is exactly one such spectral sequence for the knot in question.

In general, given a choice of degree $n$ separable potential $\partial w$ and a root $\alpha$ of that potential, there is a corresponding spectral sequence from reduced $\sl(n)$ cohomology to a 1-dimensional final page supported in cohomological degree $0$.  In this particular case, the spectral sequence corresponds to the separable potential $x^5 - x$ and the choice of root $x=0$.

Furthermore the surviving quantum degree, written as $2(n-1)\widetilde{s}_{x^5 - x,0}(K)$, gives a slice-torus knot invariant $\widetilde{s}_{x^5 - x,0}$ generalizing $s_5$.  Note that for the knot in question we have

\[ \widetilde{s}_{x^5 - x, 0}(P) = 0 \not= \frac{1}{4} = s_5(P) {\rm .} \]

We shall revisit the knot $P$ in \Cref{subsec:example_redux} where we shall shine more light on the concrete phenomena observed above.

\subsection{Summary}
In \Cref{sec:lowerbounds}, we give the definitions and prove the basic properties of
the slice genus lower bounds coming from separable potentials; in particular, it is
shown that not only unreduced, but also reduced Khovanov-Rozansky cohomologies
induce lower slice genus bounds, which are actually more well-behaved than the unreduced bounds:
they are all concordance homomorphisms (in particular slice-torus invariants).
We close by reanalyzing the example of the pretzel knot $P=P(2,-3,5)$ in the light of the properties we have established.
We expect that these results generalize to slice genus bounds for multi-component links, in the appropriate sense;
but for the sake of simplicity we restrict ourselves to knots.

\Cref{subsec:comparison} introduces the notion of KR-equivalent potentials:
potentials inducing homotopy equivalent filtered cochain complexes for all links.
We show that there are at most countably many KR-equivalence classes, and that one of them is generic.
By analyzing the cohomology of the trefoil, we establish that there are at least $n-1$ KR-equivalence classes.

\Cref{subsec:further_egs} exhibits further characteristics of the sliceness obstructions,
which are much more complex than one would have reasonably guessed from what was previously known.

\Cref{sec:calc} discusses the simple form of the cochain complexes of bipartite knots, and the
program \khoca{} (\textbf{k}not \textbf{ho}mology \textbf{ca}lculator) that calculates
their Khovanov-Rozansky cohomologies.

\subsection{Conventions}
For the most part we shall follow the conventions of \cite{KR1}.  These amount to choosing the degree of the variable $x$ to be $2$, and deciding in which cohomological degrees the complex associated to a positive crossing \includegraphics{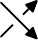} is supported (in degrees $0$ and $1$).  These choices have the consequence that the cohomology of the positive trefoil is supported in non-negative cohomological degrees but in \emph{negative} quantum degrees.  This negative quantum support is in contrast to the situation of the normalization of standard Khovanov cohomology.  Since we encounter Khovanov cohomology only as the case $n=2$ of Khovanov-Rozansky cohomology, we are going to be normalizing the Rasmussen invariant $s=s_2$ so that it is negative on the positive trefoil.

\subsection{Acknowledgments}
The second author wishes to thank Daniel Krasner for bequeathing him bipartite knots and their simple cochain complexes. 
The first author thanks everybody with whom stimulating discussions were had, in particular
Mikhail Khovanov, Paul Turner, Emmanuel Wagner, Andrin Schmidt, and Hitoshi Murakami.
Both authors were funded by EPSRC grant EP/K00591X/1. While working on this paper, the first author was also supported
by the Max Planck Institute for Mathematics Bonn and the SNF grant 137548.

\section{The slice genus lower bounds from separable potentials}
\label{sec:lowerbounds}
\subsection{Reduced cohomology and slice-torus invariants}
\label{sec:red_slice_torus}

Given a knot $K$ with marked diagram $D$, the Khovanov-Rozansky cohomology $H_{\partial w}(K)$ has the structure of a module over the ring $\C[x]/ \partial w$.  In fact, it is the cohomology of a cochain complex $C_{\partial w}(D)$ of free modules over $\C[x]/\partial w$.

This statement is best visualized by cutting the diagram $D$ open at a point marked with the decoration $x$ and thus presenting $D$ as a $(1,1)$-tangle.  Using MOY moves, each cochain group can then be identified with finite sums of quantum-shifted matrix factorizations corresponding to the crossingless $(1,1)$-tangle.  Closing all of these trivial tangles gives the complex associated to the uncut diagram $D$, and each circle now appearing corresponds to a copy of $\C[x]/\partial w$.

This module structure seems at first sight as if it may have some dependence on the choices of diagram and of marked point.  However, if $T$ is a tangle with endpoints labeled $x_1, x_2, \ldots, x_r$, then the Khovanov-Rozansky functor gives a complex of (vectors of) matrix factorizations over the ring $\C[x_1, x_2, \ldots, x_r]$.  Reidemeister moves on $T$ give homotopy equivalent complexes via homotopy equivalences respecting the ground ring.  As a consequence, the $\C[x]/ \partial w$-module structure on the Khovanov-Rozansky cohomology is invariant under Reidemeister moves performed on the (1,1)-tangle.  Finally it is an exercise for the reader to see that if $D$ and $D'$ are two Reidemeister-equivalent diagrams, (each with a marked point on corresponding link components), then $D$ and $D'$ can be connected by a sequence of Reidemeister moves that take place away from the marked points and which take the marked point of $D$ to that of $D'$.

In the case of standard Khovanov-Rozansky cohomology with $\partial w = x^n$, the action of $x$ on the cochain complex preserves the cohomological grading and raises the quantum grading by $2$.  For explicitness we make a definition.

\begin{definition}
We define the reduced Khovanov-Rozansky cohomology $\redH_{x^n}(K)$ of a knot $K$ to be the cohomology of the cochain complex $(x^{n-1}) C_{x^n}(D) [1 - n]$, where the closed brackets denote a shift in quantum filtration.
\end{definition}

The reduced Khovanov-Rozansky cohomology $\redH_{x^n}$ has as its graded Euler characteristic the Reshetikhin-Turaev $\sl(n)$ polynomial normalized so that the unknot is assigned the polynomial $1 \in \Z[q^{\pm 1}]$.

\begin{remark}
We note that in the literature the reduced Khovanov cohomology, for example, is often defined in such a way that its graded Euler characteristic is the Jones polynomial with a surprising normalization: the unknot is assigned polynomial $q^{-1} \in \Z[q^{\pm 1}]$.  We consider our convention to be possibly a little more natural.
\end{remark}

We now wish to give a good definition for a reduced Khovanov-Rozansky cohomology of a knot $K$ using a separable potential $\partial w$, i.e. a potential that is the product of distinct linear factors:

\[ \partial w = \prod_{i=1}^n (x - \alpha_i) {\rm .}\]

\noindent For any marked diagram $D$ of $K$, $H_{\partial w}(K)$ is the cohomology of a cochain complex $C_{\partial w}(D)$ of free $(\C [x] / \partial w)$-modules, inducing a $(\C [x] / \partial w)$-module structure on $H_{\partial w}(K)$.  In fact, we know that $H_{\partial w}(K)$ is $n$-dimensional and that the action of $x$ on $H_{\partial w}(K)$ splits the cohomology into $n$ $1$-dimensional eigenspaces with eigenvalues $\alpha_1, \alpha_2, \ldots, \alpha_n$.  In other words, $H_{\partial w}(K)$ is a free rank $1$ module over the ring $\C [x] / \partial w$.  The reader should note, however, that the quantum filtration of $H_{\partial w}(K)$ need not correspond to an overall shift of the usual filtration on $\C[x] / \partial w$.

\begin{definition}
Suppose $\alpha$ is a root of the degree $n$ monic separable polynomial $\partial w$.  We define $\redH_{\partial w, \alpha}(K)$ (the $(\partial w, \alpha)$\emph{-reduced cohomology} of the knot $K$ with marked diagram $D$) to be the cohomology of the cochain complex

\[ \redC_{\partial w, \alpha}(D) := \left( \frac{\partial w}{x - \alpha} \right) C_{\partial w}(D) [1-n] {\rm ,} \]

\noindent where the square brackets denote a shift in the quantum filtration.

\end{definition}

First note that $\redH_{\partial w, \alpha}(K)$ is certainly a knot invariant, this follows from a similar, but not totally isomorphic, discussion to that appearing at the start of this section:  the cochain complex $\redC_{\partial w, \alpha}(D)$ is a quantum-shifted subcomplex of $C_{\partial w}(D)$.  If $D$ and $D'$ are marked-Reidemeister-equivalent marked diagrams (and the Reidemeister moves take place away from the marked point) then $C_{\partial w}(D)$ and $C_{\partial w}(D')$ are cochain homotopy equivalent $\C[x] / \partial w$-cochain complexes (where the $x$ corresponds to the marked point).  The homotopy equivalences can then be restricted to the subcomplexes $\redC_{\partial w, \alpha}(D)$ and $\redC_{\partial w, \alpha}(D')$ with no modification (this is really an argument for a general ring $R$ about subcomplexes of $R$-complexes given by the action of an ideal of $R$).

We note that one should \emph{not} expect, in general, that $\redH_{\partial w, \alpha}(K)$ is filtered-isomorphic to $\left( \frac{\partial w}{x - \alpha} \right) H_{\partial w}(K) [1-n]$ (which is also a knot invariant).  Indeed, we shall see examples where it certainly differs.

The shift in the quantum degree is to ensure that $\redH_{\partial w, \alpha}(U)$ has Poincar\'e polynomial $1$ for $U$ the unknot and any choice of $(\partial w, \alpha)$.  We shall show

\begin{theorem}
\label{thm:nonstdredSS}
For any knot $K$ and for each separable choice of $(\partial w, \alpha)$, the reduced cohomology $\redH_{\partial w, \alpha}(K)$ is 1-dimensional.  Furthermore, there exists a spectral sequence with $E_1$-page $\redH^{i,j}_{x^n}(K)$ and $E_\infty$-page $\Gr^j \redH^i_{\partial w, \alpha}(K)$.
\end{theorem}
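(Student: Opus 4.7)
The plan is to exploit the semisimplicity of $R := \C[x]/\partial w$. Since $\partial w$ is separable, the Chinese Remainder Theorem gives $R \cong \prod_{i=1}^n \C$, with the $\alpha$-factor corresponding to the Lagrange idempotent $e_\alpha = \prod_{\beta \neq \alpha}(x-\beta)/(\alpha-\beta)$. In particular, $p := \partial w/(x-\alpha) = \partial w'(\alpha)\cdot e_\alpha$, and $\partial w'(\alpha)\neq 0$. Hence $pC_{\partial w}(D) = e_\alpha C_{\partial w}(D)$ is a direct summand as a complex of $R$-modules, complementary to $(1-e_\alpha)C_{\partial w}(D)$.

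The dimension statement follows immediately: taking cohomology of the idempotent gives $H^*(e_\alpha C_{\partial w}(D)) = e_\alpha H_{\partial w}(K)$. By \Cref{GLWthm}, $H_{\partial w}(K)$ is $n$-dimensional and splits as a direct sum of one-dimensional $x$-eigenspaces indexed by the roots of $\partial w$, so $e_\alpha H_{\partial w}(K)$ is exactly the $\alpha$-eigenspace, hence one-dimensional. The quantum shift $[1-n]$ does not affect dimension.

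For the spectral sequence, endow $\redC_{\partial w,\alpha}(D) = pC_{\partial w}(D)[1-n]$ with the quantum filtration inherited from $C_{\partial w}(D)$; the crux is to identify its associated graded with $\redC_{x^n}(D) = (x^{n-1})C_{x^n}(D)[1-n]$. For each $R$-generator $g_i$ of $C_{\partial w}(D)$ sitting in quantum filtration $d_i$, the $R$-submodule $pRg_i = \C\cdot pg_i$ is one-dimensional over $\C$. Because $p = x^{n-1} + (\text{lower-order terms in }x)$, the element $pg_i$ has exact quantum filtration $d_i+2(n-1)$ with leading term $x^{n-1}g_i$. Therefore $\Gr(pC_{\partial w}(D))$, viewed as a subcomplex of $\Gr C_{\partial w}(D) = C_{x^n}(D)$, coincides with $x^{n-1}C_{x^n}(D)$, and after the shift $[1-n]$ this equals $\redC_{x^n}(D)$ by definition. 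The induced differential on the associated graded is the restriction of the standard Khovanov-Rozansky differential of $C_{x^n}(D)$ to the subcomplex $x^{n-1}C_{x^n}(D)$, which is precisely the differential of $\redC_{x^n}(D)$.

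Standard filtered-complex machinery then yields a spectral sequence with $E_0 = \redC_{x^n}(D)$, hence $E_1 = \redH_{x^n}(K)$, abutting to $\Gr\redH_{\partial w,\alpha}(K)$; convergence is automatic since the quantum filtration is bounded in each cohomological degree. The main technical obstacle is the third step: one must carefully distinguish the inherited filtration on $pC_{\partial w}(D)$ (defined by intersection with $\F^j C_{\partial w}(D)$) from alternatives such as pushing a filtration forward along $p$, and verify that neither the graded pieces nor the induced differentials pick up lower-order corrections beyond those encoded by the reduced standard $\sl(n)$ cochain complex.
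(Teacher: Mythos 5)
Your proof is correct, and the spectral-sequence half runs on essentially the same rails as the paper's: both identify the associated graded of the filtered subcomplex $pC_{\partial w}(D)$ with $x^{n-1}C_{x^n}(D)$ via the filtered free-module decomposition $C_{\partial w}(D)\cong\bigoplus_i (\C[x]/\partial w)[a_i]$ coming from MOY simplification, and then invoke standard filtered-complex machinery. You state the decomposition without proof, whereas the paper derives it explicitly (and supplements it with the two-periodic matrix factorization picture to control leading terms); that is the ``main technical obstacle'' you correctly flag, and it is exactly what the paper spends its words on. The dimension half is genuinely different: the paper goes back into Gornik's machinery, selecting the $1/n$ of Gornik's generators decorated by $\alpha$ at the marked point and Gauss-eliminating them down to the single cocycle $g^D_\alpha$, whereas you bypass the cochain-level analysis by noting $pC_{\partial w}(D)=e_\alpha C_{\partial w}(D)$ is a direct summand and invoking the known $\C[x]/\partial w$-module structure of $H_{\partial w}(K)$ from \Cref{GLWthm}. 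Your route is cleaner but relies on \Cref{GLWthm} as a black box; the paper's route is more self-contained and, as a bonus, exhibits the explicit generating cocycle $g^D_\alpha$ that is reused later (in the slice genus bound argument of \Cref{thm:redconc}), so the Gornik-generator approach earns its keep beyond this one theorem. One small addendum worth including: the invariance of the resulting spectral sequence under choice of diagram is automatic since it is the spectral sequence of the filtered complex $\redC_{\partial w,\alpha}(D)$, whose filtered homotopy type is diagram-independent; the paper states this explicitly and you should too.
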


Taking Gornik's choice of potential we obtain a corollary.

\begin{corollary}
\label{cor:stdredSS}
For any knot $K$ there exists a spectral sequence with $E_1$-page $\redH_{x^n}(K)$ such that the $E_\infty$-page is 1-dimensional and has Poincar\'e polynomial $q^{2(n-1)s_n(K)} \in \Z[q,t]$.
\end{corollary}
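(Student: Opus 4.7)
The plan is to specialize Theorem~\ref{thm:nonstdredSS} to Gornik's potential $\partial w = x^n - 1$ with an arbitrary chosen root $\alpha$ (say $\alpha = 1$). This immediately yields the required spectral sequence with $E_1 = \redH_{x^n}(K)$ whose $E_\infty$ page is $1$-dimensional, so the remaining task is to identify the quantum degree of the unique surviving generator as $2(n-1)s_n(K)$.

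To do this I would bootstrap from Theorem~\ref{Lobb3thm}, which describes the unreduced spectral sequence: $\Gr H_{\partial w}(K)$ has $n$ generators at quantum degrees $j_r = 2(n-1)s_n(K) - n + 2r - 1$ for $1 \leq r \leq n$; in particular the top filtration level is $j_n = 2(n-1)s_n(K) + n - 1$. Setting $p(x) = (x^n-1)/(x-\alpha)$, a monic polynomial of quantum degree $2(n-1)$, the reduced cochain complex $\redC_{\partial w,\alpha}(D) = p(x)\,C_{\partial w}(D)[1-n]$ is a filtered subcomplex of $C_{\partial w}(D)[1-n]$, and the inclusion induces a morphism from the reduced spectral sequence to the (quantum-shifted) unreduced one. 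Since $H_{\partial w}(K)$ is a rank-$1$ free module over $\C[x]/\partial w$ whose generator $v$ sits at filtration $j_1$ with $x$ raising filtration by $2$, the element $p(x)\,v = x^{n-1}v + (\text{lower-order terms in } x)\cdot v$ sits at filtration $j_n$. Hence the $1$-dimensional image $p(x)\,H_{\partial w}(K)$ is supported in filtration degree $j_n$, which after the $[1-n]$ shift becomes $j_n - (n-1) = 2(n-1)s_n(K)$.

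The main obstacle will be verifying that the induced morphism of spectral sequences is non-zero on the $E_\infty$ page, so that the $1$-dimensional $\Gr \redH_{\partial w,\alpha}(K)$ maps isomorphically onto the $\alpha$-eigenspace of $\Gr H_{\partial w}(K)[1-n]$ rather than collapsing to a lower filtration level. This reduces to showing that a cocycle representative of the survivor on the reduced side is still a non-coboundary inside the unreduced complex. The rank-$1$ $\C[x]/\partial w$-module structure on $H_{\partial w}(K)$ makes this plausible: if $v \in C_{\partial w}(D)$ is a cocycle whose class generates $H_{\partial w}(K)$, then $p(x)\,v$ is a cocycle in the reduced subcomplex and its class in $H_{\partial w}(K)$ is the non-zero generator of the $\alpha$-eigenspace, so the induced map on cohomology is non-zero; tracking the filtration through the morphism of spectral sequences should then place the reduced survivor at quantum degree exactly $2(n-1)s_n(K)$, yielding the stated Poincar\'e polynomial $q^{2(n-1)s_n(K)}$.
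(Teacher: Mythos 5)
Your plan has a genuinely different shape from the paper's proof, but it contains two gaps that the paper's route is specifically designed to avoid.

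The first gap is in the filtration analysis of the module structure. You assert that $H_{\partial w}(K)$ has a $\C[x]/\partial w$-module generator $v$ sitting at filtration level $j_1$, with $x$ raising filtration by exactly $2$, so that $p(x)v$ sits at filtration $j_n$. The paper explicitly warns twice against assuming such compatibility: ``the quantum filtration of $H_{\partial w}(K)$ need not correspond to an overall shift of the usual filtration on $\C[x]/\partial w$,'' and one should \emph{not} expect $\redH_{\partial w,\alpha}(K)$ to be filtered-isomorphic to $\left(\frac{\partial w}{x-\alpha}\right)H_{\partial w}(K)[1-n]$. In fact the eigenvector basis $\{[g_{\alpha_i}]\}$ is emphatically not adapted to the filtration for the Gornik potential, since by symmetry all $[g_{\alpha_i}]$ sit at the \emph{same} filtration degree. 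So a careful choice of generator and a genuine argument would be needed where your proposal treats this as given.

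The second gap is more fundamental: even granting the claimed filtration degree of $p(x)v$, your reasoning only delivers one inequality. The inclusion $\redC_{\partial w,\alpha}\hookrightarrow C_{\partial w}[1-n]$ is filtered of degree $0$, so the filtration degree of the image of the reduced survivor is at most (not equal to) the filtration degree of the reduced survivor itself. Identifying the image's filtration degree as $2(n-1)s_n(K)$ therefore gives $\tilde{s}_{x^n-1,\alpha}(K)\geq s_n(K)$ and nothing more. Your remark about ``tracking the filtration through the morphism of spectral sequences'' does not supply the converse inequality. The paper obtains both bounds at once in \Cref{prop:relation red unred} by summing over all roots and using the reverse map $C_{\partial w}\to\redC_{\partial w,\alpha}[1-n]$ given by multiplication by $\partial w/(x-\alpha)$, which is also filtered and bijective on cohomology. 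It then combines $\left|j_i - 2(n-1)\tilde{s}_{\partial w,\alpha_i}\right|\leq n-1$ with two further inputs: (a) the rotational symmetry of the roots of $x^n-1$, which forces $\tilde{s}_{x^n-1,\alpha}$ to be independent of $\alpha$, and (b) Lobb's computation $j_n-j_1=2(n-1)$. These pin the common value down uniquely. Neither (a) nor the reverse map appears in your outline, and they are what replace the module-theoretic filtration claim that you rely on.
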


This corollary is not surprising, and may even be considered `folklore', but as far as we know there is no proof in the literature.

Since the reduced cohomology for a separable potential is always $1$-dimensional we can make another definition.

\begin{definition}
	For a knot $K$ and $(\partial w, \alpha)$ as above, we define the $(\partial w, \alpha)$ \emph{reduced slice genus bound} $\reds_{\partial w, \alpha} (K) \in \frac{1}{(n-1)}\Z$ to be $1/2(n-1)$ times the $j$-grading of the support of the 1-dimensional vector space $\Gr^j\redH^0_{\partial w, \alpha}(K)$
\end{definition}

We have taken this choice of normalization so that we have

\[ \reds_{\partial w, \alpha} (T(2,3)) = s_n (T(2,3)) = -1 {\rm ,} \]

\noindent for any choice of $n$ and of $(\partial w, \alpha)$ where we write $T(2,3)$ for the positive trefoil.

\begin{definition}[{cf. \cite{livingston,lew2}}]
Let $S: \cC \rightarrow \R$ be a homomorphism from the smooth concordance group of oriented knots to the reals.  We say that $S$ is a \emph{slice-torus} invariant if

\begin{enumerate}
	\item $g_*(K) \geq |S(K)|$ for all oriented knots $K$, where we write $g_*(K)$ for the smooth slice genus of $K$.
	\item $S(T(p,q)) = \frac{-(p-1)(q-1)}{2}$ for $T(p,q)$ the $(p,q)$-torus knot.
\end{enumerate}
\end{definition}

\begin{theorem}
	\label{thm:redconc}
Suppose $\alpha$ is a root of the degree $n$ monic separable polynomial $\partial w$.  Then we have that $\reds_{\partial w, \alpha}$ defines a map

\[  \reds_{\partial w, \alpha} : \mathcal{C} \longrightarrow \frac{1}{2(n-1)}\Z \]

\noindent which is a slice-torus invariant.
\end{theorem}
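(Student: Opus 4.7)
The plan is to prove, in succession: (i) a cobordism inequality for $\widetilde{s}_{\partial w, \alpha}$, which yields both concordance invariance and the slice genus bound in one go; (ii) additivity of $\widetilde{s}_{\partial w, \alpha}$ under connected sum; and (iii) the correct value on positive torus knots.

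For (i), I would appeal to the functoriality of Khovanov--Rozansky cohomology under cobordisms, as developed by Wu and the second author. A connected genus-$g$ cobordism $\Sigma \colon K_0 \to K_1$ induces a chain map $F_\Sigma$ on $C_{\partial w}$ shifting quantum filtration by $(n-1)\chi(\Sigma) = -2(n-1)g$. Choosing the marked arc in the cobordism exterior ensures that $F_\Sigma$ commutes with the $x$-action and hence restricts to a map $\widetilde{F}_\Sigma$ between the 1-dimensional reduced cohomologies guaranteed by \Cref{thm:nonstdredSS}. Non-triviality of $\widetilde{F}_\Sigma$ is checked by composing with $F_{\overline\Sigma}$ to obtain a genus-$2g$ cobordism from $K_0$ to itself, whose induced action on $\widetilde{H}_{\partial w, \alpha}(K_0)$ is scalar multiplication by a nonzero polynomial in $\alpha$, as one sees by running Gornik-style eigenvalue analysis. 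Comparing quantum degrees of the generators in source and target gives $|\widetilde{s}_{\partial w, \alpha}(K_0) - \widetilde{s}_{\partial w, \alpha}(K_1)| \leq g$; setting $g = 0$ gives concordance invariance, and setting $K_1 = U$ gives the slice genus bound.

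For (ii), I would establish a filtered homotopy equivalence $\widetilde{C}_{\partial w, \alpha}(K_0 \# K_1) \simeq \widetilde{C}_{\partial w, \alpha}(K_0) \otimes_{\C} \widetilde{C}_{\partial w, \alpha}(K_1)$ by starting from the standard tensor-product formula $C_{\partial w}(K_0 \# K_1) \simeq C_{\partial w}(K_0) \otimes_{\C[x]/\partial w} C_{\partial w}(K_1) [n-1]$ coming from cutting at the connect-sum site. Applying $(\partial w/(x-\alpha))(\cdot)[1-n]$ to both sides, and using the identity $((\partial w/(x-\alpha))M) \otimes_R N = ((\partial w/(x-\alpha))M) \otimes_\C (N/(x-\alpha)N)$ for $R = \C[x]/\partial w$, the shift $[-2(n-1)]$ relating the $\alpha$-eigenspace (a subspace) to the $\alpha$-quotient (a quotient) conspires with the outer shifts $[n-1]$ and $[1-n]$ to cancel everything out. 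Additivity of the unique surviving quantum degree of the 1-dimensional reduced cohomology of a knot then gives the homomorphism property.

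For (iii), the cobordism bound supplies $|\widetilde{s}_{\partial w, \alpha}(T(p,q))| \leq (p-1)(q-1)/2$. For the opposite inequality, represent $T(p,q)$ as a positive braid closure on $p$ strands with $(p-1)q$ crossings. The reduced complex is then supported in non-negative cohomological degrees, and the oriented resolution contributes a distinguished class in cohomological degree $0$; tracking writhe-based quantum shifts exactly as in the proofs of \Cref{Lobb3thm} and \cite{Lobb1, Wu1} places this class in quantum filtration $-(n-1)(p-1)(q-1)$. Since there are no chain elements in negative cohomological degrees, the class cannot be killed by any differential and survives to a non-trivial element of $\widetilde{H}^0_{\partial w, \alpha}(T(p,q))$; this gives $\widetilde{s}_{\partial w, \alpha}(T(p,q)) \leq -(p-1)(q-1)/2$, and combined with the upper bound yields equality. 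The main technical obstacle is verifying the non-degeneracy of the reduced cobordism map for a general separable potential, which requires carefully extending Gornik's eigenspace analysis beyond the unknot while tracking quantum filtration shifts through the $\alpha$-eigenspace decomposition.
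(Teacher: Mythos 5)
Your proposal follows the same overall strategy as the paper: (i) cobordism maps of controlled filtered degree restricted to the reduced subcomplexes give the slice-genus bound, (ii) a tensor-product analysis at the connect-sum site gives additivity, and (iii) a positive diagram gives the torus-knot values. The substance is correct, but two of your steps deviate in ways worth commenting on. In (i), the detour through $F_{\overline\Sigma}\circ F_\Sigma$ buys nothing: on a one-dimensional space, knowing that the genus-$2g$ self-cobordism acts by a nonzero scalar is logically equivalent to knowing $\widetilde{F}_\Sigma\neq 0$, so you still need the real input, namely that the \emph{unreduced} cobordism map carries the Gornik cocycle $g^{D^0}_\alpha$ to a nonzero multiple of $g^{D^1}_\alpha$ (the Lobb--Wu fact), after which the reduced map inherits nonvanishing simply because it is a restriction. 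You correctly flag this as the main technical obstacle, but the paper dispenses with the composition entirely and cites the Lobb--Wu preservation statement directly. In (ii), you invoke the filtered homotopy equivalence $C_{\partial w}(K_0\#K_1)\simeq C_{\partial w}(K_0)\otimes_{\C[x]/\partial w}C_{\partial w}(K_1)[n-1]$ and then track three separate quantum shifts plus the sub-versus-quotient shift for the $\alpha$-eigenspace; that bookkeeping does close, but the paper avoids needing the $\otimes_R$ equivalence as a black box by analyzing the $1$-handle map $\Phi$ directly: on the oriented-resolution summands $\Phi$ is the multiplication map $\C[x_1,x_2]/(\partial w(x_1),\partial w(x_2))\to\C[x]/\partial w$, and its restriction to $\widetilde{C}_{\partial w,\alpha}(D_1)\otimes\widetilde{C}_{\partial w,\alpha}(D_2)$ is visibly a filtered degree-$0$ isomorphism with filtered degree-$0$ inverse, from which additivity of $\widetilde{s}_{\partial w,\alpha}$ is immediate. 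The paper's route sidesteps the sub/quotient shift discussion entirely, which is where your argument is most compressed and most likely to go wrong in the writing-up.
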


Before proving Theorems \ref{thm:nonstdredSS} and \ref{thm:redconc}, we remind the reader of the work of Gornik's \cite{G} which established cocycle representatives for cohomology with a separable potential.  (In fact Gornik considered only the potential $\partial w = x^n - 1$, but his arguments apply to all separable potentials without critical change.)

Fix any separable $\partial w$ with roots $\alpha_1, \ldots , \alpha_n$, and let $\Gamma$ be a MOY graph.  The cohomology of $\Gamma$, which we shall write simply as $h_{\partial w}(\Gamma)$, is a filtered complex vector space.  If $\Gamma$ occurs as a resolution of some link diagram $D$, then $h_{\partial w}(\Gamma)$ appears on a corner of the Khovanov-Rozansky cube as a cochain group summand of $C_{\partial w}(D)$.

A basis for $h_{\partial w}(\Gamma)$ is given by all \emph{admissible decorations} of $\Gamma$, that is, all decorations of the thin edges of $\Gamma$ with roots of $\partial w$ satisfying the admissibility condition.  This condition is the requirement that at each thick edge two distinct roots decorate each entering thin edge, and the same two roots decorate the exiting thin edges.

If we let $\Gamma$ vary over all resolutions of a diagram $D$ we thus obtain a basis for each cochain group $C_{\partial w}^i(D)$.  By considering how the Khovanov-Rozansky differential acts on the bases, Gornik showed that a basis for the cohomology $H_{\partial w}(D)$ is given by cocycles corresponding to decorations that arise in the following way.  Starting with a decoration by roots of $\partial w$ of the components of $D$, take the oriented resolution at crossings where the roots agree and the thick-edge resolution at crossings where the roots differ.  For each such decoration of components of $D$ by roots, this produces a resolution $\Gamma$ and a cocycle in $h_{\partial w}(\Gamma) \subseteq C_{\partial w}(D)$ surviving to cohomology.  In the case that $D$ is a diagram of a ($1$-component) knot, it follows that Gornik's cocycle representatives live in the summand of the cohomological degree $0$ cochain group corresponding to the oriented resolution of $D$.

Essentially, Gornik's argument proceeded by Gauss elimination, grouping other basis elements into canceling pairs and leaving only the generating cocycles described above.

\begin{lemma}[Gauss Elimination as in \cite{fastcompu}]
\label{lem:gauss}
In an additive category with an isomorphism $h$, the cochain complex
\[
\xymatrix@=1.2cm{
\ldots \ar[r] & P \ar[r]^-{\left(\begin{smallmatrix} * \\ g \end{smallmatrix}\right)} &  Q\oplus R
\ar[r]^-{\left(\begin{smallmatrix} h & i \\ j & k\end{smallmatrix}\right)} & S\oplus T
\ar[r]^-{\left(\begin{smallmatrix} * & \ell\end{smallmatrix}\right)} & U \ar[r] & \ldots
}
\]
is homotopy equivalent to
\[
\xymatrix@=1.7cm{
\ldots \ar[r] & P \ar[r]^{g} &  R
\ar[r]^{k\, -\, j h^{-1} i} & T
\ar[r]^{\ell} & U\ar[r] & \ldots.
}
\]
\end{lemma}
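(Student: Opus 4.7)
My plan is to prove the homotopy equivalence by a direct change-of-basis argument, showing that after conjugating the middle differential by suitable block automorphisms, the big complex becomes the direct sum of the small complex and the manifestly contractible two-term complex $Q \xrightarrow{h} S$.

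First I would unpack what the cocycle condition $d\circ d = 0$ imposes on the two morphisms marked $\ast$. Writing $\ast_1\colon P \to Q$ for the entry above $g$ and $\ast_2\colon S \to U$ for the entry to the left of $\ell$, the vanishing of the two relevant compositions at $Q\oplus R$ and at $S\oplus T$ give $h\ast_1 + i g = 0$ and $\ast_2 h + \ell j = 0$. Since $h$ is invertible, this forces $\ast_1 = -h^{-1} i g$ and $\ast_2 = -\ell j h^{-1}$.

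Next I would replace $Q\oplus R$ by itself via the automorphism $A = \left(\begin{smallmatrix} 1 & -h^{-1} i \\ 0 & 1\end{smallmatrix}\right)$, and $S\oplus T$ by itself via $B = \left(\begin{smallmatrix} 1 & 0 \\ j h^{-1} & 1\end{smallmatrix}\right)$, leaving the objects at all other positions unchanged; this is an isomorphism of cochain complexes. A short matrix computation using the identities from the previous step shows that $A^{-1}\left(\begin{smallmatrix}\ast_1\\g\end{smallmatrix}\right) = \left(\begin{smallmatrix}0\\g\end{smallmatrix}\right)$, that $B^{-1}\left(\begin{smallmatrix} h & i\\ j & k\end{smallmatrix}\right) A = \left(\begin{smallmatrix}h & 0\\ 0 & k - j h^{-1} i\end{smallmatrix}\right)$, and that $(\ast_2, \ell)\, B = (0, \ell)$.

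At this point the cochain complex manifestly decomposes as the direct sum of the desired complex and the two-term complex $0 \to Q \xrightarrow{h} S \to 0$ sitting in the middle two positions; since $h$ is an isomorphism, the latter is null-homotopic with explicit contracting homotopy $h^{-1}\colon S \to Q$, and the inclusion of and projection onto the complementary summand are mutually inverse homotopy equivalences between the two complexes in the statement. The main point to watch is the first step, correctly reading off what the cocycle condition says about the $\ast$ entries; once these are in hand the remainder is matrix arithmetic that works in any additive category.
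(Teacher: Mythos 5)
Your proof is correct. Note that the paper itself gives no proof of this lemma; it simply cites Bar-Natan's \emph{Fast Khovanov homology computations}, so there is no ``paper's own proof'' to compare against directly.

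That said, your route and Bar-Natan's differ in presentation. Bar-Natan writes down explicit chain maps $F\colon C_{\text{big}} \to C_{\text{small}}$ and $G\colon C_{\text{small}} \to C_{\text{big}}$ (with blocks built from $-jh^{-1}$ and $-h^{-1}i$) together with a homotopy whose only nonzero block is $h^{-1}\colon S\to Q$, and verifies $FG = \mathrm{id}$ and $GF \simeq \mathrm{id}$ by hand. You instead conjugate the differentials by the block-unitriangular automorphisms $A$ at $Q\oplus R$ and $B$ at $S\oplus T$, reducing the middle differential to diagonal form and splitting off the acyclic complex $Q\xrightarrow{h} S$. The two are really the same computation viewed from different angles -- your $A$ and $B$ are exactly the ingredients of Bar-Natan's $F$, $G$, and the homotopy is again $h^{-1}$ -- but the change-of-basis formulation makes the direct-sum decomposition, and hence the ``why it works,'' more transparent, at the cost of having to be a bit careful about which side of the differential each automorphism acts on. All your individual steps check out: the reading of $d^2=0$ that forces $\ast_1 = -h^{-1}ig$ and $\ast_2 = -\ell j h^{-1}$; the three block computations $A^{-1}\binom{\ast_1}{g}=\binom{0}{g}$, $B^{-1}\bigl(\begin{smallmatrix}h&i\\ j&k\end{smallmatrix}\bigr)A=\bigl(\begin{smallmatrix}h&0\\ 0& k-jh^{-1}i\end{smallmatrix}\bigr)$, $(\ast_2,\ell)B=(0,\ell)$; and the final observation that the two-term complex with differential an isomorphism is contractible. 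One small thing worth saying explicitly (you omit it): the untouched differential $\cdots\to P$ composes to zero with $\binom{0}{g}$ because $d^2=0$ already forces $gf=0$, so the big complex genuinely splits as a direct sum of complexes, not just of graded objects.
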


More explicitly, we describe the situation in the case of knots.  Let $D$ be any knot diagram, and let $O(D)$ be the oriented resolution of $D$.  We define

\[ h_{\partial w}(O(D)) : = \Big(\bigotimes_c \C [x_c] / \partial w (x_c) [1-n] \Big) [(1-n)w(D)] \]

\noindent where the tensor product is taken over all components $c$ of $O(D)$ and $w(D)$ stands for the writhe of $D$.  Now $h_{\partial w}(O(D))$ is naturally a summand of the cochain group $C_{\partial w}^0(D)$.
Let

\[ \partial w = \prod_{i=1}^n (x - \alpha_i) {\rm ,} \]

\noindent then for any knot diagram $D$ there exist $n$ linearly independent cocycles

\[ g^D_{\alpha_i} \in h_{\partial w}(O(D)) \subseteq   C^0_{\partial w}(D) \quad {\rm for} \, \, 1 \leq i \leq n \]

\noindent given by

\[ g^D_{\alpha_i} = \bigotimes_c \frac{1}{\partial w '(\alpha_i)} \left( \frac{\partial w (x_c)}{x_c - \alpha_i}  \right) .\]

\begin{theorem}[Gornik \cite{G}]
These cocycles $g^D_{\alpha_i}$ descend to give a basis for the cohomology $H^0_{\partial w}(D)$.  Furthermore, there is a spectral sequence with $E_1$-page $H^{i,j}_{x^n}(D)$ abutting to $Gr^j H^i_{\partial w}(D)$.
\end{theorem}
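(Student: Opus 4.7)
The statement has two parts: producing the spectral sequence and identifying the surviving cocycle representatives, so I would treat them separately, with most of the work going into the latter.

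For the spectral sequence, the plan is simply to invoke the general machinery of filtered cochain complexes applied to the quantum filtration $\F^{\bullet}C_{\partial w}(D)$. The key input is already recorded earlier in the paper: the associated graded identifies with the standard Khovanov--Rozansky complex, $\F^{j}C^{i}_{\partial w}(D)/\F^{j-1}C^{i}_{\partial w}(D) \cong C^{i,j}_{x^{n}}(D)$. Hence the standard spectral sequence of a filtered complex has $E_{0}^{i,j} = C^{i,j}_{x^{n}}(D)$ with $d_{0}$ equal to the $\partial w = x^{n}$ differential, so $E_{1}^{i,j} = H^{i,j}_{x^{n}}(D)$, and it abuts to $\Gr^{j}H^{i}_{\partial w}(D)$. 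One only has to check that the filtration is bounded on each cochain group (which it is, as each $C^{i}_{\partial w}(D)$ is a direct sum of finitely many shifted copies of $\C[x_{1},\ldots,x_{r}]/\partial w$-modules of bounded quantum degree), and convergence follows.

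The heart of the argument is showing that the $g^{D}_{\alpha_{i}}$ descend to a basis of $H^{0}_{\partial w}(D)$, and that $H^{i}_{\partial w}(D) = 0$ for $i \neq 0$. My plan is to run the local Gauss elimination of Lemma \ref{lem:gauss} crossing-by-crossing. For each crossing of $D$, the two resolutions $\Gamma_{0}$ (oriented) and $\Gamma_{1}$ (singular, thick-edge) appear in the relevant cochain groups, with bases given by admissible decorations of their thin edges. I would pair off admissible decorations as follows: given any admissible decoration of $\Gamma_{0}$ in which the two outgoing strands at the crossing are labeled by \emph{distinct} roots $\alpha_{a}, \alpha_{b}$, there is a matching admissible decoration of $\Gamma_{1}$ obtained by inserting a thick edge decorated with the pair $\{\alpha_{a},\alpha_{b}\}$. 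Via the MOY decomposition, the differential between these paired basis vectors is an isomorphism (up to a nonzero scalar coming from $\partial w'(\alpha_{a}) - \partial w'(\alpha_{b})$ and the Vandermonde-type denominators in the MOY formulas), so Gauss elimination cancels them. What remains uncanceled at $\Gamma_{0}$ are precisely the admissible decorations in which the two strands at that crossing receive the \emph{same} root.

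Iterating over all crossings and keeping track of what survives, the only admissible decorations not killed are those of the oriented resolution $O(D)$ in which, at every crossing, the two strands carry the same root; equivalently, decorations obtained by labeling each component of $O(D)$ (equivalently of $D$) by a single root. This gives $n$ surviving generators in $h_{\partial w}(O(D)) \subseteq C^{0}_{\partial w}(D)$, and zero surviving generators in all other cohomological degrees. To identify these survivors with the explicit cocycles $g^{D}_{\alpha_{i}}$, I would verify that the formula $g^{D}_{\alpha_{i}} = \bigotimes_{c} \frac{1}{\partial w'(\alpha_{i})}\bigl(\partial w(x_{c})/(x_{c}-\alpha_{i})\bigr)$ is indeed a cocycle (using that $x_{c}$ acts by $\alpha_{i}$ on this element modulo the other terms) and that its image under the Gauss-elimination projection onto the residual complex is nonzero; linear independence then follows from $n$ survivors in an $n$-dimensional space. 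The main obstacle is the bookkeeping: verifying that the local cancellations at different crossings can be carried out coherently (i.e., the Gauss eliminations commute up to homotopy, so the order does not matter), which is where one needs the MOY graph formalism to behave well with respect to the filtration and with respect to the specific scalar factors in $g^{D}_{\alpha_{i}}$.
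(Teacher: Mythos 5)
The paper does not reprove this theorem (it cites \cite{G}), but it does describe Gornik's method as Gauss elimination of the admissible-decoration basis; your spectral-sequence argument matches what the paper indicates ($E_0$ of the quantum filtration is $C_{x^n}(D)$), and that part is fine. For the basis statement you have the right skeleton, but the scalar you claim makes the paired differential invertible, $\partial w'(\alpha_a) - \partial w'(\alpha_b)$, can vanish even for distinct roots: with $\partial w = x^3 - x$, $\alpha_a = 1$, $\alpha_b = -1$, one has $\partial w'(1) = \partial w'(-1) = 2$, and your proposed cancellation would fail for that potential. The quantities that are genuinely nonzero by separability are the differences of roots $\alpha_a - \alpha_b$ and the products $\partial w'(\alpha_i) = \prod_{j \neq i}(\alpha_i - \alpha_j)$ that normalize the $g^D_{\alpha_i}$, not differences of values of $\partial w'$.

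The more serious issue is the coherence of the iterated cancellations, which you relegate to bookkeeping; this is actually the content of the proof and you supply no argument for it. Applying \Cref{lem:gauss} at one crossing replaces the remaining differentials by $k - j h^{-1} i$, and the correction term $j h^{-1} i$ couples admissible decorations across crossings, so the clean local pairing you set up at the other crossings is no longer the differential of the surviving complex. You cannot iterate crossing-by-crossing and simply add the conclusions. What is needed is either a single global direct-sum decomposition of the whole cube complex, with respect to which one block-triangular Gauss elimination performs all cancellations simultaneously, or a leading-term argument with respect to an auxiliary order under which the corrections $j h^{-1} i$ are strictly subordinate to the pairing. The paper attributes to Gornik exactly such a global viewpoint --- the surviving cocycles are indexed by global decorations of the components of $D$ by roots, with the canceling pairs organized across the entire cube --- and without an argument of this kind the crossing-by-crossing sketch is a genuine gap.
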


Gornik's proof of the existence of the spectral sequence relied on identifying the $E_0$-page of the spectral sequence associated to the filtered complex $C_{\partial w}(D)$ with the Khovanov-Rozansky cochain complex $C_{x^n}(D)$.

Now we have enough to prove the theorems stated earlier.

\begin{proof}[Proof of {\protect\Cref{thm:nonstdredSS}}]
	First note that Gornik's proof that the unreduced cohomology $H_{\partial w}(K)$ is $n$-dimensional also demonstrates that the reduced $\redH_{\partial w, \alpha}$ cohomology is $1$-dimensional.  To see this, observe that the reduced cohomology is the cohomology of a subcomplex $\redC_{\partial w, \alpha}(D)$ of $C_{\partial w}(D)[1-n]$ where $D$ is a marked diagram for $K$.  The subcomplex is spanned by exactly $\frac{1}{n}$ of Gornik's generators for $C_{\partial w}(D)[1-n]$ (those with the decoration $\alpha$ at the marked thin edge).  These generators can be Gauss-eliminated following Gornik's recipe, leaving just one cocycle
	
	\[ g^D_\alpha \in \redC_{\partial w, \alpha}(D) \subseteq C_{\partial w}(D) [1-n] \]
		
	\noindent which generates the cohomology.
	
	To show the existence of the spectral sequence we wish to see that the $E_0$-page associated to $\redC_{\partial w, \alpha}(D)$ corresponds exactly to the complex $x^{n-1}C_{x^n}(D)$ which computes standard reduced cohomology.  The invariance of the spectral sequence under choice of diagram is automatic: the spectral sequence in question is just that associated to the filtered complex $\redC_{\partial w, \alpha}(D)$, whose filtered homotopy type we know is independent of the diagram.
	
	We shall proceed by using the restriction of the correspondence between the $E_0$-page of the spectral sequence associated to the filtered complex $C_{\partial w}(D)$ with the Khovanov-Rozansky cochain complex $C_{x^n}(D)$.
	
	Let $\Gamma$ be a resolution of $D$, inheriting the marked point of $D$.  Then $h_{x^n}(\Gamma)$ is a $\C[x]/x^n$-module, and $h_{\partial w}(\Gamma)$ is a $\C[x]/\partial w$-module.  To see the module structures, perform MOY decompositions away from the marked point until you arrive at a single marked circle.  This gives the module structures explicitly as
	
	\[ h_{x^n}(\Gamma) = \bigoplus_{i} (\C[x]/x^n) [a_i] \]
	
	\noindent and
	
	\[ h_{\partial w}(\Gamma)= \bigoplus_{i} (\C[x]/\partial w) [a_i]\]
	
	\noindent for a finite sequence of integers $\{ a_i \}$ where the first decomposition is as graded modules, and the second as filtered modules.
	
	The conclusion we want is almost clear from this, we just need to say a bit more about the module structures.
	
	From the definition by matrix factorizations, $h_{\partial w}(\Gamma)$ is the cohomology of a 2-periodic complex of free $\C[x]$-modules where the modules are graded and the differentials are filtered.  On the other hand, $h_{x^n}(\Gamma)$ arises as the cohomology of the 2-periodic complex with the same cochain groups but where just the top-degree components of the differential are retained.
		
	For any $\psi \in h_{\partial w}(\Gamma)$, write $\psi' \in h_{x^n}(\Gamma)$ for the associated graded element.  Then observe that if $q(x)$ is a polynomial in $x$ with leading term $x^r$, we have $(q(x) \psi)' = x^r \psi'$ if and only if they are of the same grading.
		
	Now it is clear that the associated graded vector space to $\frac{\partial w}{x - \alpha} h_{\partial w}(\Gamma)$ is exactly $x^{n-1} h_{x^n}(\Gamma)$.\end{proof}

Now we prove the second theorem.

\begin{proof}[Proof of \protect\Cref{thm:redconc}]
	Firstly we show that $|\reds_{\partial w, \alpha}(K)|$ is a lower bound for the slice genus of $K$.  We make use of the arguments already given in the unreduced case by Lobb and Wu, which we briefly summarize here.
	
	To each Morse move (otherwise known as handle attachment), from a diagram $D$ to a diagram $D'$, there is associated a cochain map $C_{\partial w}(D) \rightarrow C_{\partial w}(D')$.  This cochain map is filtered of degree $n-1$ in the case of a 1-handle attachment, and of degree $1-n$ for 0- and 2-handle attachments.  Taking these together with the homotopy equivalences between Reidemeister-equivalent cochain complexes gives a way to associate a filtered cochain map to a representation of a link cobordism.  Specifically, given a movie $M$ of a cobordism between diagrams $D^0$ and $D^1$, by composing the cochain maps we already have, we can thus associate a filtered cochain map
	
	\[ M_* : C_{\partial w}(D^0) \rightarrow C_{\partial w}(D^1) {\rm .} \]
	
	\noindent Adding up the contributions from the various maps, we observe that this map is filtered of degree $(1-n) \chi(M)$ where we write $\chi(M)$ for the Euler characteristic of the surface represented by $M$.
	
	Finally one shows that if $M$ is a movie of a connected cobordism between the two knot diagrams $D^0$ and $D^1$, then $M_*(g_{D^0_\alpha})$ is a non-zero multiple of $g_{D^1_\alpha}$ for $\alpha$ any root of $\partial w$.  Hence, we see that $M_*$ induces an isomorphism on cohomology.
	
	The slice genus bound statements in \Cref{GLWthm} follow immediately.
	
	For the reduced statement, we note that the cochain maps on the unreduced cochain complexes induced by handle moves and Reidemeister moves on marked diagrams respect the $\C[x]/\partial x$-structure of the cochain groups.  Hence by restriction they also induce maps on the reduced cochain complexes.
	
	We now take a marked movie $M$ (which can be thought of as describing a cobordism together with an embedded arc without critical points) between the marked diagrams $D^0$ and $D^1$.  This induces a map
	
	\[ M_* : \redC_{\partial w, \alpha}(D^0) \rightarrow \redC_{\partial w, \alpha}(D^1) {\rm ,} \]
	
	\noindent again of filtered degree $(1-n) \chi(M)$.  Note furthermore that the generator of the cohomology is preserved since this map is just a restriction of the unreduced map which does preserve the generator.
	
	This dispenses with the question of whether $|\reds_{\partial w, \alpha}(K)|$ is a lower bound for the slice genus of $K$.  That this bound is tight for torus knots (and in fact for all positive knots), is immediate from consideration of a positive diagram.
	
	Finally we show that $\reds_{\partial w, \alpha}$ is a concordance homomorphism.  From the 0-crossing diagram of the unknot $U$ we know that $\reds_{\partial w, \alpha}(U) = 0$, it therefore remains to show that $\reds_{\partial w, \alpha}(K_1 \# K_2) = \reds_{\partial w, \alpha}(K_1) + \reds_{\partial w, \alpha}(K_2)$, where we write $\#$ for the connect-sum operation.
	
	Let then $D_1$ and $D_2$ be two marked diagrams, and let $D = D_1 \# D_2$ be the marked diagram formed by the connect sum, with the connect sum taking place at the marked point.
	
	We write $\Phi$ for the map 
	
	\[ \Phi : C_{\partial w}(D_1) \otimes_{\C} C_{\partial w}(D_2) \rightarrow C_{\partial w}(D) \]
	
	\noindent induced by 1-handle addition to the diagram $D_1 \sqcup D_2$.  Let $\Gamma_1$ and $\Gamma_2$ be MOY resolutions of $D_1$ and $D_2$ respectively, and let $\Gamma = \Gamma_1 \# \Gamma_2$ be the corresponding resolution of $D$.
	
	By repeated MOY simplification away from the marked points we can reduce $\Gamma_1 \sqcup \Gamma_2$ to the disjoint union of two marked circles $U_1$ and $U_2$.  Performing the same MOY simplification to $\Gamma$ we can reduce to the marked circle $U_1 \# U_2$.  Thus for $\{a_i\}$ being a finite sequence of integer shifts we see that $\Phi$ restricted to the cochain group summand $S : = h_{\partial w}(\Gamma_1) \otimes_{\C} h_{\partial w}(\Gamma_2)$ is the map
	
	\[ \Phi|_S : \bigoplus_{i} \left( \C[x_1 , x_2]/(\partial w(x_1), \partial w(x_2)) \right) [a_i] \rightarrow \bigoplus_i \left( \C[x]/\partial w(x)  \right) [a_i] \]
	
	\noindent given by `multiplication' or, in other words, the identification $x_1 = x_2 = x$.
	
	Restricting $\Phi$ to the shifted subcomplex $\redC_{\partial w, \alpha}(D_1) \otimes \redC_{\partial w, \alpha}(D_2)$, the restriction to the corresponding summand
	
	\[ \widetilde{S} : = \frac{\partial w(x_1)}{x_1-\alpha} h_{\partial w}(\Gamma_1) \otimes \frac{\partial w(x_2)}{x_2-\alpha} h_{\partial w}(\Gamma_2) [2-2n] \]
	
	\noindent is projectively the map
	
	\begin{eqnarray*}
	\Phi |_{S'} &:& \bigoplus_{i} \frac{\partial w(x_1)}{x_1 - \alpha}\frac{\partial w(x_2)}{x_2 - \alpha}\left( \C[x_1 , x_2]/(\partial w(x_1), \partial w(x_2)) \right) [2 - 2n + a_i] \\ &\longrightarrow& \bigoplus_i \frac{\partial w(x)}{x - \alpha}\left( \C[x]/\partial w(x)  \right) [1 - n + a_i]
	\end{eqnarray*}
	
	\noindent again given by multiplication.
	
	This is a filtered degree $0$ isomorphism of vector spaces with filtered degree $0$ inverse, and hence $\Phi$ restricts to an isomorphism of filtered cochain complexes $\redC_{\partial w, \alpha}(D_1) \otimes \redC_{\partial w, \alpha}(D_2)$ and $\redC_{\partial w, \alpha}(D)$, and we are done.
	
\end{proof}

To deduce \Cref{cor:stdredSS} we first prove some results of independent interest that relate unreduced cohomology to the reduced concordance homomorphisms.

\begin{prop}
\label{prop:red<>unred}
Let $D$ be a diagram of the knot $K$, let $\alpha$ be a root of $\partial w$, and let $g_\alpha \in C^0_{\partial w}(D)$ be the Gornik cocycle corresponding to $\alpha$.

Then the filtration degree $q$ of $[g_\alpha] \in H^0_{\partial w}(D)$ satisfies

\[ q \leq s_{\partial w, \alpha}(K) + n - 1 {\rm .} \]
\end{prop}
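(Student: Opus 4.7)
The plan is to exploit the fact that the Gornik cocycle $g_\alpha$ lies not only in $C^0_{\partial w}(D)$ but already inside the reduced subcomplex $\redC^0_{\partial w,\alpha}(D)$ once the quantum-filtration shift is accounted for, and then to read off the bound from the already-computed filtration degree of the generator of reduced cohomology.

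First I would observe that the explicit product formula
\[ g_\alpha \;=\; \bigotimes_c \frac{1}{\partial w'(\alpha)} \, \frac{\partial w(x_c)}{x_c - \alpha} \]
displays $g_\alpha$ as an element of $\frac{\partial w}{x-\alpha}\, C^0_{\partial w}(D) \subseteq C^0_{\partial w}(D)$, the factor at the marked component providing the required witness. By the proof of \Cref{thm:nonstdredSS}, when viewed as a cocycle in $\redC^0_{\partial w,\alpha}(D)$ this same element descends to the generator of the one-dimensional reduced cohomology $\redH^0_{\partial w,\alpha}(K)$.

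Next I would use the definition of $\reds_{\partial w,\alpha}$, reading $s_{\partial w,\alpha}(K) = 2(n-1)\reds_{\partial w,\alpha}(K)$ as the reduced quantum filtration degree of $[g_\alpha]_{\mathrm{red}} \in \redH^0_{\partial w,\alpha}(K)$. Consequently there exist a cocycle $\tilde g_\alpha \in \redC^0_{\partial w,\alpha}(D)$ of reduced filtration at most $s_{\partial w,\alpha}(K)$, together with an element $h \in \redC^{-1}_{\partial w,\alpha}(D)$, such that $\tilde g_\alpha - g_\alpha = dh$. Since $\redC^{\bullet}_{\partial w,\alpha}(D)$ sits inside $C^{\bullet}_{\partial w}(D)$ as a subcomplex (up to the quantum-filtration shift), the very same $h$ witnesses $[g_\alpha] = [\tilde g_\alpha]$ in the unreduced cohomology $H^0_{\partial w}(D)$.

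Finally I would unwind the shift: since $\redC^\bullet_{\partial w,\alpha}(D) = \frac{\partial w}{x-\alpha} C^\bullet_{\partial w}(D)[1-n]$, the inclusion into $C^\bullet_{\partial w}(D)$ raises quantum filtration by exactly $n-1$, so $\tilde g_\alpha$ has unreduced filtration degree at most $s_{\partial w,\alpha}(K) + n - 1$, and therefore $q \leq s_{\partial w,\alpha}(K) + n - 1$. The main obstacle is essentially notational, namely keeping the shift conventions straight; the conceptual content --- that the reduced generator is literally the same cocycle as one of the unreduced Gornik generators, just sitting in a filtered subcomplex --- is immediate from Gornik's construction, and no new spectral-sequence input is needed.
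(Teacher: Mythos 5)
Your proposal is correct and takes essentially the same approach as the paper's proof: the paper phrases the argument in terms of a containment of cosets $g_\alpha + \tfrac{\partial w}{x-\alpha}\,d(C^{-1}(D)) \subseteq g_\alpha + d(C^{-1}(D))$ rather than chasing explicit representatives $\tilde g_\alpha$ and $h$, but the content --- that $g_\alpha$ lives in and generates the reduced subcomplex, that the reduced coboundaries are contained in the unreduced ones, and that the inclusion shifts the quantum filtration by $n-1$ --- is identical.
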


\begin{proof}
The degree $q$ is the smallest filtration degree among elements of the coset $g_\alpha + d(C^{-1}(D))$ (where we write $d$ for the differential in the Khovanov-Rozansky cochain complex).  But the reduced cohomology also has $g_\alpha$ as a cocycle representative.  From this it follows that $s_{\partial w, \alpha}(K) + n - 1$ is the smallest filtration degree among elements of the coset $g_\alpha + [\partial w / (x - \alpha)] d(C^{-1}(D))$.  This latter coset is a subset of the former coset, from which the result follows.
\end{proof}

We can also compare the filtration on the entire unreduced cohomology with the collection of $n$ slice genus bounds corresponding to each root of $\partial w$.  The next proposition implies in particular that the bounds arising from unreduced cohomology and the unreduced bounds differ by at most 1.
\begin{proposition}\label{prop:relation red unred}
Let $K$ be a knot. We have $H_{\partial w}(K) = q^{j_1(K)} + \ldots + q^{j_n(K)}$ with
$j_1(K)\leq \ldots\leq j_n(K)$ as in \Cref{GLWthm}.
Sort the roots $\alpha_1, \ldots, \alpha_n$ of $\partial w$ such that
$\widetilde{s}_{\partial w, \alpha_1}(K) \leq \ldots \leq
\widetilde{s}_{\partial w, \alpha_n}(K)$.
Then we have 
\[
\left|j_i(K) - 2(n-1)\widetilde{s}_{\partial w, \alpha_i}(K)\right| \leq n-1.
\]
\end{proposition}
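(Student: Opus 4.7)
The strategy is to play off two natural filtered cochain maps between the unreduced and the reduced complexes. For each root $\alpha$ of $\partial w$, the subcomplex inclusion $\redC_{\partial w,\alpha}(D)\hookrightarrow C_{\partial w}(D)$ (undoing the $[1-n]$ shift of \Cref{sec:red_slice_torus}) induces a map $\mu^\alpha\colon \redH_{\partial w,\alpha}(K)\to H_{\partial w}(K)$ of filtered degree $n-1$. Dually, multiplication by $\partial w/(x-\alpha)$ commutes with the differential of $C_{\partial w}(D)$ and has image precisely the subcomplex defining $\redC_{\partial w,\alpha}(D)[n-1]$; composing with the shift yields a map $\mu_\alpha\colon H_{\partial w}(K)\to \redH_{\partial w,\alpha}(K)$ also of filtered degree $n-1$ (the $x$-degree $2(n-1)$ of $\partial w/(x-\alpha)$ minus the shift). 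Letting $H^{(\alpha)}$ denote the eigenspace on which $x$ acts by $\alpha$, the endomorphism $\partial w/(x-\alpha)$ acts on $H^{(\alpha)}$ by the nonzero scalar $\partial w'(\alpha)$, so the composition $\mu_\alpha\circ\mu^\alpha$ equals $\partial w'(\alpha)\cdot\id$ on the one-dimensional $\redH_{\partial w,\alpha}(K)$. Consequently $\mu^\alpha$ is injective with image $H^{(\alpha)}=\C\cdot[g^D_\alpha]$, and $\mu_\alpha$ is surjective with kernel $\bigoplus_{\beta\neq\alpha}H^{(\beta)}$.

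Set $\widetilde{j}_i:=2(n-1)\reds_{\partial w,\alpha_i}(K)$, so that $\redH_{\partial w,\alpha_i}(K)$ is supported at filtration $\widetilde{j}_i$. Applying $\mu^{\alpha_i}$ to its generator shows that the eigenspace $H^{(\alpha_i)}$ lives at some filtration $q_i\le \widetilde{j}_i+(n-1)$ inside $H_{\partial w}(K)$; this is essentially \Cref{prop:red<>unred}. Since the eigenspaces $H^{(\alpha_1)},\ldots,H^{(\alpha_n)}$ span $H_{\partial w}(K)$ in direct sum, $\dim\F^j H_{\partial w}(K)\ge|\{i:q_i\le j\}|$ for every $j$, which on comparing cumulative counts forces $j_i\le q_{(i)}$ for the increasingly-sorted sequences. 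Combined with the elementary observation that sorting preserves componentwise upper bounds, we conclude $j_i\le \widetilde{j}_i+(n-1)$.

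The opposite inequality uses $\mu_{\alpha_i}$. Let $j^*_i:=\min\{j:\mu_{\alpha_i}(\F^jH_{\partial w}(K))\neq 0\}$; filtered degree $n-1$ forces $j^*_i\ge \widetilde{j}_i-(n-1)$. The combined map $\Psi:=\bigoplus_i\mu_{\alpha_i}\colon H_{\partial w}(K)\to\bigoplus_i\redH_{\partial w,\alpha_i}(K)$ is, up to diagonal rescaling, the Gornik eigenspace decomposition and hence an isomorphism; applying $\Psi$ to $\F^jH_{\partial w}(K)$ yields $\dim\F^jH_{\partial w}(K)=\dim\Psi(\F^jH_{\partial w}(K))\le|\{i:j^*_i\le j\}|$, which gives $j^*_{(i)}\le j_i$ after sorting. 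Coupled with the per-root bound, this produces $j_i\ge \widetilde{j}_i-(n-1)$; adding the two halves finishes the proof. The main subtlety is identifying the two complementary filtered-degree-$(n-1)$ maps $\mu^\alpha$ and $\mu_\alpha$ and noticing that Gornik's decomposition globalises $\Psi$ to an honest isomorphism; everything else reduces to a twofold application of the standard dimension count in a filtered vector space together with the elementary rearrangement-style fact that $|a_\alpha-b_\alpha|\le D$ survives simultaneous increasing sorts of the two sides.
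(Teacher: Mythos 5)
Your proposal is correct and takes essentially the same route as the paper: both proofs play off the inclusion $\redC_{\partial w,\alpha}(D) \hookrightarrow C_{\partial w}(D)$ and the multiplication-by-$\partial w/(x-\alpha)$ map $C_{\partial w}(D) \to \redC_{\partial w,\alpha}(D)$, each filtered of degree $n-1$, and both observe that summing over the roots yields filtered bijections on cohomology (you verify this via $\mu_\alpha\circ\mu^\alpha = \partial w'(\alpha)\cdot\id$ and Gornik's eigenspace decomposition, while the paper simply notes that generating cocycles go to generating cocycles). The dimension-count and sorting arguments you spell out are exactly what the paper's phrase "summing over $i$ yields a bijective filtered map" leaves implicit.
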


\begin{proof}
Multiplication by $\partial w / (x-\alpha_i)$ gives a filtered map of degree $2n-2$
\[
C_{\partial w}(D) \to \frac{\partial w}{x-\alpha_i} C_{\partial w}(D),
\]
in other words a filtered map
\[
C_{\partial w}(D) \to \widetilde{C}_{\partial w, \alpha_i}(D)[1-n].
\]
Summing over $i$ yields a filtered map
\[
C_{\partial w}(D) \to \bigoplus_{i=1}^n \widetilde{C}_{\partial w, \alpha_i}(D)[1-n].
\]
This induces a bijective filtered map on cohomology (the inverse is not necessarily filtered, too)
\[
H_{\partial w}(K) \to \bigoplus_{i=1}^n \widetilde{H}_{\partial w,\alpha_i}(K)[1-n],
\]
since it takes generating cocycles to generating cocycles.  Hence we get for each $i\in\{1,\ldots,n\}$:
\[
j_i(K) \geq 2(n-1)\widetilde{s}_{\partial w, \alpha_i}(K) + 1 - n \]
or equivalently
\[ 2(n-1)\widetilde{s}_{\partial w, \alpha_i}(K) - j_i(K) \leq n - 1 {\rm .}
\]
To complete the proof, one could now resort to taking the mirror image of $D$.
But in fact, it suffices to consider the inclusion map
\[
\frac{\partial w}{x-\alpha_i} C_{\partial w}(D) \to C_{\partial w}(D),
\]
which is filtered, and sum over $i$ to produce a bijective filtered map
\[
\bigoplus_{i=1}^n \widetilde{C}_{\partial w, \alpha_i}(D)[n-1]
\to
C_{\partial w}(D),
\]
which also gives a bijective filtered map on cohomology.  Thus we have
\[
2(n-1)\widetilde{s}_{\partial w, \alpha_i}(K) + n - 1 \geq j_i(K) \quad\implies\quad
j_i(K) - 2(n-1)\widetilde{s}_{\partial w, \alpha_i}(K) \leq n - 1.
\]
and the proof is complete.
\end{proof}

Finally it is now a simple matter to deduce \Cref{cor:stdredSS}.

\begin{proof}[Proof of {\protect\Cref{cor:stdredSS}}]
	The homomorphism $\tilde{s}_{x^n - 1, \alpha}$ is independent of the choice of root $\alpha$ since there exist linear maps of $\C$ which cyclically permute the roots.  By \Cref{prop:relation red unred}, it follows that $2(n-1)\tilde{s}_{x^n - 1, \alpha}(K)$ is a value at distance at most $n-1$ from each of $j_1(K), \ldots, j_n(K)$.  Since $j_n(K) - j_1(K) = 2(n-1)$, this determines $\tilde{s}_{x^n - 1, \alpha}(K)$ completely and we have
	
	\[ \tilde{s}_{x^n - 1, \alpha}(K) = \frac{j_n(K) - j_1(K)}{4(n-1)} = s_n(K) {\rm .} \] 
\end{proof}

\subsection{Unreduced cohomology}
\label{subsec:basic_unred}
In this subsection we consider the unreduced theory which is in a sense richer than the reduced theory.  We still obtain slice genus lower bounds, but in general we give up the property of defining a concordance homomorphism, although we shall be able to define a concordance quasi-homomorphism.

We fix a separable potential $\partial w$ and recall the definition of the integers $j_i (K)$ from \Cref{GLWthm} describing the filtration on $H_{\partial w}(K)$. Now, since the complex associated to the mirror image of a diagram is the dual complex,
the invariants are still well-behaved with respect to the mirror image, that is to say:
\begin{prop}\label{prop:mirror}
For all $i\in\{1,\ldots,n\}$ we have
\[
j_i(\overline{K}) = -j_{n-i}(K).
\]
\end{prop}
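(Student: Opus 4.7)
The plan is to exploit the hint given just before the statement: the Khovanov--Rozansky cochain complex $C_{\partial w}(\overline{D})$ of a mirror diagram is the $\C$-linear dual of $C_{\partial w}(D)$. My first step would be to make this precise at the level of matrix factorizations. The factorization attached to a negative crossing is the dual of the one attached to a positive crossing, and assembling this over the cube of resolutions (together with the shift by $[1-n]$ on negative crossings, the shift by $[n-1]$ on positive crossings, and the global writhe correction, all of which flip sign symmetrically under mirroring) one obtains a filtered cochain isomorphism
\[
C_{\partial w}(\overline{D}) \;\cong\; \Hom_{\C}\bigl(C_{\partial w}(D),\,\C\bigr),
\]
in which dualization reverses the cohomological grading and negates every filtration degree.

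Given this duality, the rest of the argument is formal. All cochain groups involved are finite-dimensional over $\C$, so $\Hom_{\C}(-,\C)$ is exact and commutes with the (filtered) cohomology functor. One thereby obtains a filtered isomorphism
\[
H_{\partial w}(\overline{K}) \;\cong\; H_{\partial w}(K)^{*},
\]
where the right-hand filtration places a functional dual to a class of quantum degree $j$ in quantum degree $-j$.

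Now apply \Cref{GLWthm}: the associated graded of $H_{\partial w}(K)$ is concentrated in cohomological degree $0$ and consists of $n$ one-dimensional pieces in quantum degrees $j_1(K) \leq \cdots \leq j_n(K)$. Dualizing produces $n$ one-dimensional pieces in quantum degrees $-j_n(K) \leq \cdots \leq -j_1(K)$, sitting again in cohomological degree $0$. Sorting these in non-decreasing order and invoking the definition of $j_i(\overline{K})$ gives the asserted identity.

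The main (and essentially only) obstacle is the duality step: one has to track the several shifts entering the Khovanov--Rozansky cube --- the $[n-1]$ and $[1-n]$ shifts attached to crossings of each sign, the writhe correction, and the cohomological grading --- and confirm that they reorganize coherently under $\Hom_{\C}(-,\C)$ without producing a residual global shift. Since all of these ingredients are symmetric with respect to mirroring (in particular the writhe itself flips sign), no offset survives and the verification reduces to unwinding definitions.
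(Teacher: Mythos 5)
Your approach is exactly the paper's own: the paper gives no proof beyond the sentence preceding the proposition (that the mirror complex is the $\C$-linear dual), and your write-up simply fills in the standard details of pushing that duality to the filtered cohomology and to the associated graded.

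One point worth flagging: your own bookkeeping does \emph{not} in fact give the identity as printed. Dualizing the sorted list $j_1(K) \leq \cdots \leq j_n(K)$ yields $-j_n(K) \leq \cdots \leq -j_1(K)$, whose $i$-th entry is $-j_{n+1-i}(K)$, so the correct statement is $j_i(\overline{K}) = -j_{n+1-i}(K)$. The paper's formula $-j_{n-i}(K)$ is an off-by-one typo (at $i=n$ it would reference the undefined $j_0$), and one can cross-check the corrected version against the amphichiral examples in the paper's Outlook section (e.g.\ for $n=3$ one needs $j_2 = -j_2$, hence $j_2 = 0$, and for $n=4$ one needs $j_2 = -j_3$, both of which match $n+1-i$ and not $n-i$). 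So you should not assert that sorting ``gives the asserted identity'' --- it gives the corrected one, and it is worth saying so explicitly rather than silently reproducing the error.
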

However, the filtration on the unreduced cohomology $H_{\partial w} (K_1 \# K_2)$ is not in general determined by those on $H_{\partial w}(K_1)$ and on $H_{\partial w}(K_2)$, see \Cref{q:sumdet}. Still, some bounds can be given:

\begin{prop}
\label{prop:quasihomomorphisms}
For knots $K_1$ and $K_2$ we have
\begin{eqnarray*}
j_1(K_1) + j_1(K_2) + 1 - n \leq j_i (K_1 \# K_2) \leq j_n(K_1) + j_n(K_2) - 1 + n
\end{eqnarray*}
\end{prop}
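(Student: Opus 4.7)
The plan is to exploit the two 1-handle cobordisms that connect $K_1 \sqcup K_2$ and $K_1 \# K_2$. The merge cobordism $K_1 \sqcup K_2 \to K_1 \# K_2$ induces a cochain map $\Phi : C_{\partial w}(D_1) \otimes_\C C_{\partial w}(D_2) \to C_{\partial w}(D)$ of filtered degree $n-1$, exactly as already used in the proof of \Cref{thm:redconc}. The split cobordism $K_1 \# K_2 \to K_1 \sqcup K_2$ induces an analogous map $\Psi : C_{\partial w}(D) \to C_{\partial w}(D_1) \otimes_\C C_{\partial w}(D_2)$ in the opposite direction, also of filtered degree $n-1$. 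Both cobordisms are connected, so the same generator-preservation argument that underpins the slice genus bounds shows that $\Phi_*$ sends $g^{D_1}_\alpha \otimes g^{D_2}_\alpha$ to a nonzero multiple of $g^D_\alpha$, while $\Psi_*$ sends $g^D_\alpha$ to a nonzero multiple of $g^{D_1}_\alpha \otimes g^{D_2}_\alpha$. It follows that $\Phi_*$ is surjective and $\Psi_*$ is injective.

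The next step is the filtered identification $H_{\partial w}(K_1 \sqcup K_2) \cong H_{\partial w}(K_1) \otimes_\C H_{\partial w}(K_2)$, which follows from the tensorial character of the MOY calculus applied to disjoint diagrams together with a filtered Künneth argument over $\C$. In particular, every non-zero element of $H_{\partial w}(K_1 \sqcup K_2)$ has filtration degree lying in the interval $[j_1(K_1) + j_1(K_2), \, j_n(K_1) + j_n(K_2)]$.

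For the upper bound, I would pick a class $v \in H_{\partial w}(K_1 \# K_2)$ of filtration degree exactly $j_i(K_1 \# K_2)$ and choose a $\Phi_*$-preimage $u$. Since $u$ has filtration degree at most $j_n(K_1) + j_n(K_2)$ and $\Phi_*$ raises filtration by at most $n-1$, this yields $j_i(K_1 \# K_2) \leq j_n(K_1) + j_n(K_2) + n - 1$. For the lower bound, I would apply $\Psi_*$ to the same $v$: by injectivity $\Psi_*(v) \neq 0$, so its filtration degree is at least $j_1(K_1) + j_1(K_2)$, while $\Psi_*$ being of filtered degree $n-1$ gives that it is also at most $j_i(K_1 \# K_2) + n - 1$; combining yields $j_i(K_1 \# K_2) \geq j_1(K_1) + j_1(K_2) + 1 - n$.

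The main technical point to justify is the filtered identification of $H_{\partial w}(K_1 \sqcup K_2)$ with the tensor product of the cohomologies of its components; while the underlying chain-level identification is essentially a definition, checking that the induced cohomological filtration really is the tensor-product filtration requires choosing filtered bases on each factor and observing that their pure tensors provide a compatible filtered basis of the product.
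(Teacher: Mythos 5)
Your proof is correct and agrees with the paper's argument for the upper bound: the merge $1$-handle cobordism gives a filtered-degree-$(n-1)$ surjection $\Phi_*\colon H_{\partial w}(K_1)\otimes H_{\partial w}(K_2)\to H_{\partial w}(K_1\#K_2)$, and together with the filtered Künneth identification of $H_{\partial w}(K_1\sqcup K_2)$ with the tensor product this forces $j_n(K_1\#K_2)\leq j_n(K_1)+j_n(K_2)+n-1$, hence the bound for all $j_i$.

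For the lower bound you diverge from the paper. The paper simply applies the mirror symmetry of Proposition~\ref{prop:mirror} to $\overline{K_1}\#\overline{K_2}$ and thereby turns the already-established upper bound into the lower bound; this is a one-line deduction. You instead build the split $1$-handle cobordism $K_1\#K_2\to K_1\sqcup K_2$ and argue that the induced map $\Psi_*$ is a filtered-degree-$(n-1)$ injection, which pins down the lower bound directly. Both routes work. Yours is more self-contained (it does not need Proposition~\ref{prop:mirror}), but it leans on the assertion that the Gornik-generator argument applies to a connected cobordism from a knot to a two-component \emph{link}: you need that $\Psi_*(g^D_\alpha)$ is a nonzero multiple of $g^{D_1}_\alpha\otimes g^{D_2}_\alpha$, and hence that the $n$ image classes are linearly independent. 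This is indeed standard -- the cobordism is connected, so the decoration by the root $\alpha$ propagates through it -- but the paper's quoted generator-preservation statement is phrased only for cobordisms between knot diagrams, so it would be worth spelling out that the same proof covers the link case. Apart from that one point to make explicit, the proposal is sound.
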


\begin{proof}
The $1$-handle cobordism $K_1 \sqcup K_2 \rightarrow K_1 \# K_2$ induces a surjection on unreduced cohomology (this is part of the proof that unreduced cohomology gives slice genus lower bounds) which is filtered of degree $n-1$.  Furthermore we have the isomorphism as filtered vector spaces $H_{\partial w}(K_1 \sqcup K_2) = H_{\partial w} (K_1) \otimes H_{\partial w}(K_2)$.

Hence we have $j_i(K_1 \# K_2) + 1 - n \leq j_n(K_1 \# K_2) + 1 - n \leq j_n(K_1) + j_n(K_2)$.

The other inequality follows from the same argument applied to the mirrors of $K_1$ and $K_2$.
\end{proof}

Such boundedness results suggest that one should at least be able to extract from unreduced cohomology \emph{quasi-homomorphisms} from the knot concordance group to the reals.  For example, one could make the definition

\[ s_{\partial w} (K) : = \frac{j_1(K) + \cdots + j_n(K)}{2n(n-1)} {\rm .} \]

\noindent The absolute value of $s_{\partial w}$ certainly gives a lower bound on the slice genus which is tight for torus knots (since $s_{\partial w}$ is the average of $n$ functions with these properties), and furthermore one has

\begin{prop}
\label{prop:quasihomeg}
The function $s_{\partial w}$ is a quasi-homomorphism.
\end{prop}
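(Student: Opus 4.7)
The plan is to exhibit $s_{\partial w}$ as a bounded perturbation of an honest concordance homomorphism, and then observe that any such function is automatically a quasi-homomorphism. Concretely, define
\[ T(K) := \frac{1}{n} \sum_{i=1}^n \widetilde{s}_{\partial w, \alpha_i}(K). \]
By \Cref{thm:redconc} each $\widetilde{s}_{\partial w, \alpha_i}$ is a concordance homomorphism, and hence so is the average $T$. The whole strategy is to show that $s_{\partial w}$ and $T$ differ by a uniformly bounded amount.

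The central input is \Cref{prop:relation red unred}. After sorting the roots so that $\widetilde{s}_{\partial w, \alpha_1}(K) \leq \cdots \leq \widetilde{s}_{\partial w, \alpha_n}(K)$, this proposition gives the pointwise bound
\[ \bigl| j_i(K) - 2(n-1) \widetilde{s}_{\partial w, \alpha_i}(K) \bigr| \leq n-1 \]
for each $i \in \{1,\dots,n\}$. The key observation is that although the identification between the index $i$ of $j_i$ and the index of the root $\alpha_i$ depends on $K$ (through a $K$-dependent permutation), this ambiguity is invisible after summing over $i$, since we are summing over the full index set. Consequently
\[ \sum_{i=1}^n j_i(K) \;=\; 2(n-1) \sum_{i=1}^n \widetilde{s}_{\partial w, \alpha_i}(K) \;+\; \varepsilon(K), \qquad |\varepsilon(K)| \leq n(n-1). \]
Dividing by $2n(n-1)$ yields
\[ \bigl| s_{\partial w}(K) - T(K) \bigr| \;\leq\; \tfrac{1}{2} \quad \text{for every knot } K. \]

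With this bounded-difference inequality in hand, the quasi-homomorphism property falls out immediately. Since $T(K_1 \# K_2) - T(K_1) - T(K_2) = 0$, the triangle inequality applied three times gives
\[ \bigl| s_{\partial w}(K_1 \# K_2) - s_{\partial w}(K_1) - s_{\partial w}(K_2) \bigr| \;\leq\; \tfrac{3}{2}, \]
a uniform bound, which is exactly what it means for $s_{\partial w}$ to be a quasi-homomorphism.

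There is no real obstacle here: the conceptual work was already done in \Cref{prop:relation red unred} and \Cref{thm:redconc}. The only point requiring a moment of care is the permutation issue --- one has to notice that the $K$-dependent sorting of the roots drops out upon summing over all indices, so that the sum $\sum_i \widetilde{s}_{\partial w, \alpha_i}(K)$ is unambiguously defined and homomorphic in $K$.
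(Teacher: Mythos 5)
Your proof is correct and amounts to the same argument as the paper's: both rest on \Cref{prop:relation red unred} to compare $\sum_i j_i$ with $2(n-1)\sum_i \widetilde{s}_{\partial w,\alpha_i}$ (where the sort drops out after summing) and on the fact, from \Cref{thm:redconc}, that each $\widetilde{s}_{\partial w,\alpha_i}$ is a concordance homomorphism. The paper manipulates the triple difference $s_{\partial w}(K_1 \# K_2) - s_{\partial w}(K_1) - s_{\partial w}(K_2)$ directly in one chain of inequalities, whereas you isolate the honest homomorphism $T = \frac{1}{n}\sum_i \widetilde{s}_{\partial w,\alpha_i}$ and show $|s_{\partial w} - T| \leq \tfrac12$ uniformly; this is a purely presentational reorganization that yields the same bound of $\tfrac32$.
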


\begin{proof}
\begin{eqnarray*}
 &      & | s_{\partial w}(K_1 \# K_2) - s_{\partial w} (K_1) - s_{\partial w}(K_2) | \\
 & =    & \left| \frac{1}{2n(n-1)}\sum_{i = 1}^n (j_i(K_1 \# K_2) - j_i(K_1) - j_i(K_2))\right| \\
 & \leq & \frac{1}{2n(n-1)} \bigg( 3n(n-1) + \sum_{\substack{\alpha \, : \\ \partial w(\alpha) = 0}} 2(n-1)| \tilde{s}_{\alpha}(K_1 \# K_2) - \tilde{s}_{\alpha}(K_1) - \tilde{s}_{\alpha}(K_2)| \bigg)\\
 & =    & \frac{3}{2}
\end{eqnarray*}

\noindent directly from \Cref{prop:relation red unred}.
\end{proof}

We now give a definition which will enable us to be briefer in the sequel.

\begin{definition}
We put a partial order on Laurent polynomials in $q$ with non-negative integer coefficients by writing $F_1(q) \geq F_2(q)$ if and only if $F_1(q) - F_2(q)$ is expressible as a sum of polynomials

\[ F_1(q) - F_2(q) = \sum_i q^{u_i} - q^{v_i} \]

\noindent where $u_i \geq v_i$ for all $i$.
\end{definition}

\begin{lemma}
\label{positivesum}
Let $\partial w$ be a separable potential, $K$ a knot, and $P$ a positive knot.  Then we have
\[ \Gr^j H^i_{\partial w} (K \# P) = \Gr^{j} H^i_{\partial w} (K)[2(n-1)s_n(P)] , \]
where the square brackets denote a shift in the quantum grading.  In other words, taking connect sum with a positive knot $P$ has the effect of an overall shift equal to the genus of $P$.
\end{lemma}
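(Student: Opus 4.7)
The strategy is to reduce, via a connect-sum formula for Khovanov-Rozansky cohomology, to a structural statement about the filtered $\C[x]/\partial w$-module $H_{\partial w}(P)$ alone.

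First I would establish, using the (1,1)-tangle / matrix factorization picture at the marked point (as in the opening paragraphs of \Cref{sec:red_slice_torus}) together with the filtered degree $n-1$ of the 1-handle attachment, a filtered cochain equivalence
\[
C_{\partial w}(D_K \# D_P) \;\simeq\; C_{\partial w}(D_K) \otimes_{\C[x]/\partial w} C_{\partial w}(D_P)\,[n-1],
\]
with the tensor product taken over the $\C[x]/\partial w$-action coming from the marked points. This is the unreduced counterpart of the tensor-product calculation appearing in the proof of \Cref{thm:redconc} (cf.\ the description of $\Phi|_S$ and $\Phi|_{S'}$ there). Passing to cohomology,
\[
H_{\partial w}(K \# P) \;\cong\; H_{\partial w}(K) \otimes_{\C[x]/\partial w} H_{\partial w}(P)\,[n-1].
\]

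The problem then reduces to showing that, for any positive knot $P$,
\[
H_{\partial w}(P) \;\cong\; \tfrac{\C[x]}{\partial w}\bigl[\,2(n-1)s_n(P) + 1 - n\,\bigr]
\]
as a filtered $\C[x]/\partial w$-module. Here one combines: (i) $H_{\partial w}(P)$ is free of rank one over $\C[x]/\partial w$, by Gornik's analysis; (ii) the proof of \Cref{thm:redconc} remarks that the slice-torus bound is tight on all positive knots, so $\widetilde{s}_{\partial w, \alpha}(P) = s_n(P)$ for every root $\alpha$ of $\partial w$, and \Cref{prop:relation red unred} then confines every $j_r(P)$ to a window of width $2(n-1)$ centered at $2(n-1)s_n(P)$, which contains exactly $n$ integer positions of the correct parity; (iii) choosing a cyclic generator $\eta$ of minimum filtration $j_1(P)$, the basis $\eta, x\eta, \ldots, x^{n-1}\eta$ has filtrations at most $j_1(P), j_1(P)+2, \ldots, j_1(P)+2(n-1)$, and a cyclicity obstruction (no cyclic generator of a rank-one $\C[x]/\partial w$-module can be an eigenvector of $x$ when $n > 1$) forces the filtrations to increase strictly at each step, pinning down the filtered isomorphism with $j_r(P) = 2(n-1)s_n(P) + 2r - n - 1$.

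Combining the two pieces,
\[
H_{\partial w}(K \# P) \;\cong\; H_{\partial w}(K) \otimes_A A\bigl[\,j_1(P)\,\bigr]\,[n-1] \;=\; H_{\partial w}(K)\,\bigl[\,2(n-1)s_n(P)\,\bigr],
\]
which is the lemma. The delicate part is step (iii): the window constraint alone leaves room for filtrations to pile up at intermediate positions, and ruling this out requires iterating the cyclicity obstruction, with the positivity of $P$ entering essentially through the uniform alignment of all the reduced invariants at $s_n(P)$, which is precisely what makes the window have exactly enough room for the cyclic basis to fit.
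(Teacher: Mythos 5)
There is a genuine gap, and it lies in the overall strategy rather than in a fixable detail. You reduce the lemma to a statement about the filtered $\C[x]/\partial w$-module $H_{\partial w}(P)$ alone, but that data cannot determine the effect of connect sum: $\Gr^j H$ of a tensor product of filtered complexes is governed by the filtered \emph{homotopy type} of each factor, not by the filtered modules $H(C_i)$ (an acyclic filtered summand whose differential strictly drops filtration still interacts with the spectral sequence of the tensor product, and nothing in your argument rules such a summand out of $C_{\partial w}(D_P)$). The paper makes this failure concrete in \Cref{subsec:further_egs}: for $\partial w = x^5-x$ the knot $K'=P(5,-3,2)^{\#4}$ has $H_{\partial w}(K') = q^{-8}\cdot[5]$, a perfectly shifted unknot, yet $H_{\partial w}(K'\#K') = q^{-4}+q^{-17}\cdot[4]$ is not the corresponding shift of $H_{\partial w}(K')$. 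So even if you establish $H_{\partial w}(P)\cong (\C[x]/\partial w)[2(n-1)s_n(P)+1-n]$ as a filtered module, the lemma does not follow; positivity must enter at the cochain level. The paper's proof does exactly this: one inequality comes from the genus $-s_n(P)$ cobordism and \Cref{GLWthm}; for the other, it takes a \emph{positive diagram} $D_P$, uses $C^{-1}_{\partial w}(D_P)=0$ to conclude that the filtration level of a cocycle in $C^0$ equals that of its cohomology class, exhibits the cocycle $h$ corresponding to $1\in\C[x]/\partial w$ (a combination of all Gornik generators with nonzero coefficients) at the minimal level $1-n+2(n-1)s_n(P)$, and then pushes $k\otimes[h]$ through the filtered degree $n-1$ one-handle map to bound the filtration grading of a nonzero image class.

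There is also a secondary problem in your step (iii): a cyclic generator of $H_{\partial w}(P)$ realizing the minimal filtration $j_1(P)$ need not exist (the minimal filtration subspace could be spanned by a single eigenvector of $x$, which never generates a free rank-one module for $n>1$), and the stated ``cyclicity obstruction'' does not force the filtration levels of $\eta, x\eta, \ldots, x^{n-1}\eta$ to increase strictly, so pile-ups are not excluded. Your step (i), the filtered K\"unneth identification of $C_{\partial w}(D_K\#D_P)$ with a tensor product over $\C[x]/\partial w$, is essentially sound and is implicit in the paper's proof of \Cref{thm:redconc}; it simply cannot carry the weight you place on it once you pass to cohomology.
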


\begin{proof}

Writing

\[ F_L(q) = \sum_j {\rm dim}(\Gr^{j} H^0_{\partial w}(L))q^j \]

\noindent for the Poincar\'e polynomial of a knot $L$, we will be done if we can show that

\[ F_{K \hash P}(q) = q^{2(n-1)s_n (P)}F_K(q) {\rm .} \]

First note that since $P$ is positive we have that $s_n(P)$ is non-positive and furthermore $-s_n(P)$ is equal to the slice genus of $P$.  Hence there is a genus $-s_n(P)$ cobordism from $K$ to $K \hash P$,  and we can conclude from \Cref{GLWthm} that

\[ F_{K \# P}(q) \geq q^{2(n-1)s_n (P)}F_K(q) {\rm .} \]

To establish the reverse inequality, let $D_K$, $D_P$, and $D_{K \hash P}$  be a diagram for $K$, a positive diagram for $P$, and the diagram of $K \hash P$ formed by a 1-handle addition between $D_K$ and $D_P$, respectively.

Now consider the cochain complex $C^i_{\partial w}(D_P)$.  The cochain complex is supported in non-negative homological degrees, and the cohomology of the cochain complex is supported in degree $0$.  It follows that if $g \in C^0_{\partial w}(D_P)$ is a cocycle, then the filtration grading of $[g] \in H^0_{\partial w}(D_P)$ agrees with the filtration grading of $g$.

Let $U_+$ be a positive diagram of the unknot whose oriented resolution $O(U_+)$ has the same number of components as $O(D_P)$.  We know that we have a filtered isomorphism of $\C[x]/\partial w(x)$-modules:

\[ H^0_{\partial w}(U_+) \equiv (\C [x] / \partial w (x)) [1-n] {\rm .} \]

The argument above tells us that we can identify the cohomology of $D_P$ with that of $U_+$ up to an overall shift, hence we have

\begin{eqnarray*} 
H^0_{\partial w}(D_P) &\equiv& (\C [x] / \partial w (x)) [(n-1)(\vert O(D_P) \vert - w(D_P) - 1) ] \\
&\equiv& (\C [x] / \partial w (x)) [1 - n + 2(n-1) s_n (P)]
\end{eqnarray*}

\noindent where we write $\vert O(D_P) \vert$ for the number of components of $O(D_P)$.

Under this identification, each generator $[g^{D_P}_i] \in H^0_{\partial w}(D_P)$ corresponds to a non-zero multiple of an $\alpha_i$-eigenvector of the action of $x$, in other words, to a non-zero multiple of the element

\[ \frac{\partial w(x)}{x - \alpha_i} \in \C [x] / \partial w (x)  [1 - n + 2(n-1) s_n (P)] {\rm .} \]

Now note that we have

\[ 1 = \sum_i \frac{1}{\prod_{j \not = i} (\alpha_i - \alpha_j)} \frac{\partial w(x)}{x - \alpha_i} \in \C [x] / \partial w (x)  [1 - n + 2(n-1) s_n (P)] {\rm ,} \]

\noindent so that we see that there is a cocycle $h \in C^0(D_P)$ such that the filtration grading of $[h] \in H^0_{\partial w}(D_P)$ agrees with that of $h$ and is $1-n + 2(n-1) s_n (P)$.  Furthermore, note that $h$ is a linear combination of the generators $g^{D_P}_i$ with each coefficient non-zero.

There is a map

\[ \Phi : H^0_{\partial w}(K) \otimes H^0_{\partial w}(P) \rightarrow H^0_{\partial w}(K \hash P)\]

\noindent induced by the 1-handle addition Morse move from $D_K \sqcup D_P$ to $D_{K \hash P}$.  This map is filtered of degree $n-1$.  If $0 \not= k \in H^0_{\partial w}(K)$ then we have that $0 \not= \Phi ( k \tensor [h]) \in H^0_{\partial w}(K \hash P)$, since there exists a cocycle representative for $k$ expressible as a linear combination of the generators $g^{D_K}_i$ and $h$ is a linear combination of the generators $g^{D_P}_i$ with each coefficient non-zero.

Writing ${\rm gr}$ for the filtration grading we see that

\[ {\rm gr} (\Phi ( k \tensor [h])) \leq {\rm gr}(k) + {\rm gr}([h]) + n -1 = {\rm gr}(k) + 2(n-1)s_n(P) {\rm ,} \]
and this completes the proof.
\end{proof}

Along with the usual mirror argument establishing the corresponding result for connect sum with negative knots, this is enough to deduce that the $j_i(K)$ share many properties of slice-torus invariants.  Some of these properties are described in \cite{lobbineq} by the second author.  The arguments there are specific to the situation of Khovanov cohomology, but there are topological proofs due to unpublished work by Kawamura and \cite{lew2} by the first author.

We summarize the structure of the topological arguments and how they apply in the situation of a separable potential $\partial w$.  Given a diagram $D$, one constructs cobordisms to positive and negative diagrams $D^+$ and $D^-$ respectively.  By \Cref{GLWthm}, one establishes lower bounds on $j_i(D)$ from the first cobordism and upper bounds on $j_i(D)$ from the second.  If one can make good choices for $D$, $D^+$, and $D^-$, then one can make the upper and lower bounds agree, determining each $j_i(D)$ completely.  

In such a good situation, because the cohomologies $H_{\partial w}(D^+)$ and $H_{\partial w}(D^-)$ are just shifted copies of the cohomologies of the unknot, it follows that $H_{\partial w}(D)$ is also a shifted version of the unknot.  Note that nowhere in this process have we relied on the particular choice of $\partial w$.  Hence we have an isomorphism as filtered vector spaces $H_{\partial w}(D) \equiv H_{x^n - 1}(D)$.  Furthermore, in such a good situation, the same argument applies in the reduced case so that $\tilde{s}_{\partial w, \alpha} = s_n$ for any choice of root $\alpha$.

Moreover, one can take the obvious cobordisms between diagrams $D' \# D$ and $D' \# D^+$ and between $D' \# D$ and $D' \# D^-$.  In the case of a good situation as above, it follows from the resulting inequalities that $H_{\partial w}(D' \# D)$ is a shift of $H_{\partial w}(D')$ by $2(n-1)s_n(D)$.

We give some known classes of knots which have such a good situation:

\begin{theorem}
\label{thm:homogeneousetc}
If $\partial w$ is separable and $\alpha$ is a root of $\partial w$, $K$ is a  quasi-positive, quasi-negative, or homogeneous knot (included in these categories are positive, negative, and alternating knots, but not all quasi-alternating knots) and $K'$ is a knot, we have that

\begin{itemize}
\item $\tilde{s}_{\partial w, \alpha}(K) = s_n(K)$,
\item $H_{\partial w}(K) \equiv H_{x^n - 1}(K)$ as filtered vector spaces,
\item $H_{\partial w}(K' \# K) \equiv H_{\partial w}(K')[2(n-1) s_n(K)]$. \qed
\end{itemize}
\end{theorem}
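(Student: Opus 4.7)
The plan is to execute the sandwich argument indicated in the paragraph immediately above the theorem. Given a knot $K$ in one of the listed classes, I would fix a distinguished diagram $D$ of $K$ and produce two auxiliary diagrams: a positive diagram $D^+$ together with a genus-$g^+$ cobordism to $D$, and a negative diagram $D^-$ together with a genus-$g^-$ cobordism to $D$. Applying \Cref{GLWthm} to the first cobordism and, via \Cref{prop:mirror}, to the second, I obtain two-sided bounds
\[
2(n-1)s_n(K^+) - 2(n-1)g^+ + (2i - n - 1) \leq j_i(K) \leq 2(n-1)s_n(K^-) + 2(n-1)g^- + (2i - n - 1),
\]
where I have used \Cref{positivesum} (and its mirror) to identify $H_{\partial w}(K^\pm)$ with shifted unknot cohomology. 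Everything outside the middle quantity is independent of $\partial w$, so if the diagrams and cobordisms are chosen well enough that the two sides agree, each $j_i(K)$ gets pinned to a common value equal to $2(n-1)s_n(K) + (2i - n - 1)$. This forces the filtered isomorphism $H_{\partial w}(K) \equiv H_{x^n - 1}(K)$. All of these cobordism maps restrict compatibly to the reduced subcomplexes, as in the proof of \Cref{thm:redconc}, so the single surviving generator of $\redH_{\partial w, \alpha}(K)$ is pinned to the same quantum degree, giving $\reds_{\partial w, \alpha}(K) = s_n(K)$.

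For the third bullet, I would run the same sandwich on the connect sums $K' \hash K$, $K' \hash K^+$, and $K' \hash K^-$, using the boundary-connect-summed cobordisms obtained from $\Sigma^\pm$ together with the trivial cobordism on $D'$. By \Cref{positivesum} and its mirror, $H_{\partial w}(K' \hash K^\pm) \equiv H_{\partial w}(K')[2(n-1)s_n(K^\pm)]$, and the same pinching step now forces every filtration level of $H_{\partial w}(K' \hash K)$ to agree with the corresponding level of $H_{\partial w}(K')[2(n-1)s_n(K)]$, yielding the third bullet.

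The combinatorial content, and where I expect the main obstacle to lie, is the construction of $D^+$ and $D^-$ in each of the three classes. For a quasi-positive knot presented as $K = \widehat{w_1 \sigma_{i_1} w_1^{-1} \cdots w_k \sigma_{i_k} w_k^{-1}}$, take $D^+$ to be the closure of the positive braid $\sigma_{i_1} \cdots \sigma_{i_k}$; Rudolph's standard ribbon construction supplies a cobordism of genus $g_*(K) = -s_n(K)$, and a matching cobordism to a negative diagram is obtained by running the same construction on $\overline{K}$ and translating back through \Cref{prop:mirror}. The quasi-negative case is symmetric. For a homogeneous knot one chooses a homogeneous diagram $D$ and uses Stallings's block decomposition to slide each homogeneous region into a uniformly positive or uniformly negative configuration, producing $D^+$ and $D^-$ whose genera and $s_n$-values are controlled by the Seifert genus of $D$. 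Verifying tightness of the sandwich is the only nontrivial step in each case; however, the bookkeeping is formally identical to the $n = 2$ arguments of Kawamura and \cite{lew2,lobbineq}, since all Frobenius-algebra-specific inputs are packaged inside \Cref{GLWthm} and \Cref{positivesum}, so those arguments carry over without change.
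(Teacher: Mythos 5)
Your argument is precisely the sandwich argument that the paper sketches in the paragraphs immediately preceding the theorem (and then cites out to Kawamura, \cite{lew2}, and \cite{lobbineq} for the class-by-class construction of $D^\pm$), so the approach is the same; if anything you supply more detail than the paper does. Two small notational points: $\Sigma^\pm$ appears in your third-bullet argument without ever having been introduced (it should be the cobordisms from $D^\pm$ to $D$), and when you invoke \Cref{positivesum} to conclude $H_{\partial w}(K^\pm)$ is a shifted unknot you are implicitly specializing that lemma to $K = U$ — worth saying explicitly since the lemma is stated for a connect sum.
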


There is an observation exploited by Livingstone \cite{livingston} that says if $K^+$ and $K^-$ are knots related by a crossing change, then there is a genus $1$ cobordism between $K^+$ and $K^- \# T_{2,3}$ where we write $T_{2,3}$ for the positive trefoil (this is specific example of the general observation that two intersection points of opposite sign in a connected knot cobordism can be exchanged for a single piece of genus).  The resulting inequality gives immediately

\begin{prop}
\label{prop:crossingchange}
If $K_+$ and $K_-$ are knots with diagrams that are related by a crossing change, then
\[
0 \leq j_i(K_-) - j_i(K_+) \leq 2(n-1) {\rm .}
\] \qed
\end{prop}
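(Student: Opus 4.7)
The plan is to combine the slice-genus inequality of \Cref{GLWthm} applied to two different genus-$1$ cobordisms connecting $K_+$ to (a relative of) $K_-$.

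For the upper bound, note that $K_+$ and $K_-$ differ by a single crossing change, which gives an immersed concordance in $S^3 \times [0,1]$ between them with a single double point; resolving the double point by tubing yields an embedded genus-$1$ cobordism between $K_+$ and $K_-$. \Cref{GLWthm} then immediately gives $|j_i(K_+) - j_i(K_-)| \leq 2(n-1)$, which supplies the upper bound $j_i(K_-) - j_i(K_+) \leq 2(n-1)$.

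For the lower bound, invoke the Livingston observation recalled in the paragraph preceding the proposition: there is a genus-$1$ cobordism between $K_+$ and $K_- \# T_{2,3}$. Applying \Cref{GLWthm} yields $|j_i(K_+) - j_i(K_- \# T_{2,3})| \leq 2(n-1)$. The positive trefoil satisfies $s_n(T_{2,3}) = -1$, so by \Cref{positivesum} we have $j_i(K_- \# T_{2,3}) = j_i(K_-) - 2(n-1)$. Substituting converts the bound into $|j_i(K_+) - j_i(K_-) + 2(n-1)| \leq 2(n-1)$, whose left-hand inequality rearranges to $j_i(K_+) \leq j_i(K_-)$, establishing the lower bound.

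The only non-routine input is Livingston's cobordism, whose brief geometric justification (that a connected singular knot cobordism with two opposite-sign double points can be replaced by an embedded cobordism of one higher genus) is stated in the paragraph immediately preceding the proposition; the remainder is direct arithmetic combining \Cref{GLWthm} with the positive-connect-sum shift formula of \Cref{positivesum}.
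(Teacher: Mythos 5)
Your argument is correct and follows exactly the route the paper leaves implicit: the direct genus-$1$ cobordism from tubing the double point supplies $\lvert j_i(K_+) - j_i(K_-)\rvert \le 2(n-1)$, while Livingston's cobordism together with the connected-sum shift $j_i(K_-\# T_{2,3}) = j_i(K_-) - 2(n-1)$ supplies $j_i(K_+) \le j_i(K_-)$. One small slip: in the last step it is the \emph{right-hand} side of $\lvert j_i(K_+) - j_i(K_-) + 2(n-1)\rvert \le 2(n-1)$, namely $j_i(K_+) - j_i(K_-) + 2(n-1) \le 2(n-1)$, that rearranges to $j_i(K_+)\le j_i(K_-)$ (the left-hand side gives only the weaker $-4(n-1)\le j_i(K_+)-j_i(K_-)$); the arithmetic and conclusion are otherwise sound.
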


\subsection{Appetizing example revisited}
\label{subsec:example_redux}

We return to our example of \Cref{subsec:an_example}, the pretzel knot $P = 10_{125}$, and reanalyze its cohomology in light of what we now know.  We start with an easy proposition in which we do not require that our potential $\partial w$ is separable.

\begin{definition}
A page $E_i$ (where $i\geq 1$) of a spectral sequence $E$ is called \emph{significant} if
it is not isomorphic to $E_{i-1}$ as a doubly graded vector space. Otherwise, it is called \emph{insignificant}.
\end{definition}

\begin{prop}\label{prop:mod}
Let $\partial w = \sum_i a_i x^i\in\mathbb{C}[x]$ be a potential of degree $n > 0$, and $K$ a knot.
Suppose that  $\exists m \geq 1: i\not\equiv n \pmod{m} \implies a_i = 0$.
Then there is a $\Z/2m\Z$-grading on the cohomology $H_{\partial w}(K)$ which is respected by the spectral sequence.  In particular, all pages $E_i$ of the spectral sequence arising from the filtered homotopy class of complexes corresponding to $K$ and the potential $\partial w$
with $i \not\equiv 1 \pmod{m}$ are insignificant.
\end{prop}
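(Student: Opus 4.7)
My plan is to establish the $\Z/2m\Z$-grading directly on the cochain complex $C_{\partial w}(D)$ associated to a marked diagram $D$ of $K$, and then transport it along the quantum filtration to the cohomology and to each page of the spectral sequence. The first step is the following structural observation. The quantum grading on $C_{\partial w}(D)$ is set up so that the part of the KR differential coming from the top monomial $x^n$ of $\partial w$ preserves the grading; each lower-order monomial $a_i x^i$ with $a_i \neq 0$ then contributes a piece of the differential that lowers the quantum grading by $2(n-i)$. Under the hypothesis, $i \equiv n \pmod{m}$ for every nonzero coefficient, so $2(n-i) \in 2m\Z$, and the total differential decomposes as a sum of pieces each of which shifts the quantum grading by a multiple of $2m$. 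In particular $d$ preserves the quantum grading modulo $2m$, and $C_{\partial w}(D)$ splits as a direct sum of subcomplexes $\bigoplus_{k \in \Z/2m\Z} C^{(k)}_{\partial w}(D)$ indexed by the residue class of the quantum grading.

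Taking cohomology, and using that this splitting is compatible with the quantum filtration (each filtered piece $\F^j C_{\partial w}(D)$ inherits the decomposition because the filtration is defined pointwise in the quantum grading), one obtains the claimed $\Z/2m\Z$-grading on $H_{\partial w}(K)$, and the full spectral sequence decomposes literally as a direct sum of spectral sequences indexed by $\Z/2m\Z$. In particular every page $E_r$ inherits a $\Z/2m\Z$-grading respected by all page differentials, which is the first assertion.

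For the insignificance claim, note that within the summand $C^{(k)}_{\partial w}(D)$ the quantum grading takes values only in the coset $k + 2m\Z$; the same is true on every page of the corresponding subsidiary spectral sequence. The page differential $d_r$ decreases quantum grading by exactly $2r$, so for $d_r$ to be nonzero on this summand one needs $-2r \in 2m\Z$, forcing $r \equiv 0 \pmod{m}$. Hence $d_{i-1} = 0$ and $E_i \cong E_{i-1}$ as doubly graded vector spaces whenever $i \not\equiv 1 \pmod{m}$. The only real content of the argument is the first step — verifying by unraveling the matrix factorization that the KR differential on $C_{\partial w}(D)$ truly decomposes into pieces shifting quantum grading by multiples of $2m$; everything else is formal spectral sequence bookkeeping.
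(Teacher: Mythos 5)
Your proof is correct and follows essentially the same route as the paper's: observe that the KR differential preserves the quantum grading modulo $2m$ (since every nonzero lower-order coefficient $a_i$ contributes a shift of $2(n-i)$, a multiple of $2m$ by hypothesis), split the complex along the resulting $\Z/2m\Z$-grading, and note that the page-$r$ differential shifts quantum degree by $2r$ and hence must vanish unless $m\mid r$, which gives insignificance of $E_i$ whenever $i\not\equiv 1\pmod m$. You spell out more explicitly than the paper does why the differential preserves the filtration degree modulo $2m$, but the idea is the same.
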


\begin{proof}
If $D$ is a diagram of $K$, note that the differential of $C_{\partial w}(D)$ preserves the filtration
degree modulo $2m$, thus giving a $\Z/2m\Z$-grading on the cohomology.  Splitting the complex along this cyclic grading, we see that the differentials of the spectral sequence must also respect the grading.
The differential on the $i$-th page has $q$-degree $2i$, and thus is $0$ if
$m$ does not divide $i$.
\end{proof}

We can use the cyclic grading on the cohomology to deduce consequences which are manifested in the example of the knot $P$ and the spectral sequences that we analyzed in \Cref{subsec:an_example}.  For example:

\begin{theorem}
\label{prop:integraltildesn}
The concordance homomorphism $\tilde{s}_{x^n - x, 0}$ factors through the integers.
\end{theorem}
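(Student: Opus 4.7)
The approach is to apply \Cref{prop:mod} to the potential $\partial w = x^n - x$, whose only nonzero coefficients occur at exponents $n$ and $1$. Since $1 \equiv n \pmod{n-1}$, we may take $m = n-1$, obtaining a $\Z/2(n-1)\Z$-grading on $H^*_{\partial w}(K)$. This grading arises from decomposing $C_{\partial w}(D)$ according to the residue modulo $2(n-1)$ of the quantum grading (the cochain groups coincide with those of the graded complex $C_{x^n}(D)$ and hence inherit a $\Z$-grading; the $\partial w$-differential does not respect this $\Z$-grading but does respect it modulo $2(n-1)$).

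The first step is to check that this grading descends to the reduced subcomplex $\redC_{\partial w, 0}(D) = (x^{n-1}-1)\cdot C_{\partial w}(D)\,[1-n]$. Multiplication by $(x^{n-1}-1)$ is the sum of multiplication by $x^{n-1}$ (shifting the $\Z$-grading by $2(n-1) \equiv 0$) and multiplication by $-1$ (preserving it), so it respects the $\Z/2(n-1)\Z$-decomposition; the outer shift $[1-n]$ merely relabels residues by the constant $1-n \equiv n-1$. By \Cref{thm:nonstdredSS}, $\redH^0_{\partial w, 0}(K)$ is one-dimensional, so it is concentrated in a single residue class $r \in \Z/2(n-1)\Z$.

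Next, I compute $r$ from the Gornik cocycle representative $g^D_0 = \bigotimes_c (1-x_c^{n-1}) \in \redC^0_{\partial w, 0}(D)$ (noting $\partial w'(0)=-1$). Expanding, each summand is a monomial $\pm\prod_{c\in S} x_c^{n-1}\prod_{c\notin S} 1$ indexed by a subset $S$ of the Seifert circles of $D$. Using the per-factor shift $[1-n]$ and the writhe shift $[(1-n)w(D)]$ appearing in the formula for $h_{\partial w}(O(D))$, such a monomial lies in $\Z$-grading $(n-1)(2|S|-|O(D)|-w(D))$ inside $C_{\partial w}(D)$; modulo $2(n-1)$ the term $2(n-1)|S|$ vanishes, leaving the common residue $(n-1)(|O(D)|+w(D))$. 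After adding the outer shift $1-n \equiv n-1$, the residue $r$ of $g^D_0$ in $\redC$ is $(n-1)(|O(D)|+w(D)+1) \pmod{2(n-1)}$.

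It remains to invoke the parity identity $|O(D)|+w(D) \equiv 1 \pmod 2$ for every knot diagram $D$. This follows from two standard observations: the Seifert algorithm yields a surface of integral genus $(\text{number of crossings of } D - |O(D)|+1)/2$, so the number of crossings and $|O(D)|$ have opposite parity; and the writhe has the same parity as the number of crossings. Hence $|O(D)|+w(D)+1$ is even and $r = 0$, so the filtration degree of $\redH^0_{\partial w, 0}(K)$ is a multiple of $2(n-1)$, giving $\reds_{x^n-x, 0}(K) \in \Z$. No serious obstacle arises; the only care needed is in tracking the several filtration shifts entering the reduced complex and the sign from $\partial w'(0) = -1$.
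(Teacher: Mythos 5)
Your proof is correct and follows the same route as the paper's: invoke \Cref{prop:mod} with $m = n-1$ to get the $\Z/2(n-1)\Z$-grading, and then determine the cyclic degree of the Gornik cocycle $g^D_0 = \bigotimes_c (1 - x_c^{n-1})$ inside the (shifted) reduced complex. The paper's version of this proof (which runs together with that of \Cref{prop:fractionalsn}) is terser --- it simply asserts that $g$ is homogeneous of cyclic grading $n-1$ in the unreduced complex --- and leaves tacit both the fact that the cyclic decomposition restricts to the subcomplex $\redC_{\partial w, 0}(D)$ and the parity identity $|O(D)| + w(D) \equiv 1 \pmod 2$; you have usefully made both of these explicit.
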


Observe that if $s_n(K)$ is not an integer, then we therefore have $s_n(K) \not= \tilde{s}_{x^n - x, 0}(K)$.  We note that this proposition implies the existence of two distinct spectral sequences from $\redH_{x^n}(K)$ abutting to $1$-dimensional $E_\infty$ pages supported in cohomological degree $0$, specifically one way these spectral sequences differ is in the quantum grading of the support of their $E_\infty$ pages.

In fact in this situation the unreduced cohomology must also change with the potential.

\begin{prop}
\label{prop:fractionalsn}
Suppose $K$ is such that

\[ s_n(K) \in \frac{1}{n-1}\Z \setminus \Z {\rm .} \]

Then as a filtered vector space we have

\[ H_{x^n - x} (K) \not\equiv H_{x^n - 1} (K) {\rm .}\]
\end{prop}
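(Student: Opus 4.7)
The plan is to argue by contradiction: assume $H_{x^n-x}(K) \equiv H_{x^n-1}(K)$ as filtered vector spaces, and derive $s_n(K) \in \Z$. By \Cref{Lobb3thm} the filtration degrees of $H_{x^n-x}(K)$ would then be $j_i = 2(n-1)s_n(K) + 2i - n - 1$ for $i = 1, \ldots, n$. The potential $x^n - x$ satisfies the hypothesis of \Cref{prop:mod} with $m = n-1$, so there is a $\Z/2(n-1)\Z$-grading $H_{x^n-x}(K) = \bigoplus_k H_k$ in which $H_k$ consists of the classes of filtration degree $\equiv k \pmod{2(n-1)}$. Since $j_n - j_1 = 2(n-1)$ and the $j_i$ form an arithmetic progression of step $2$, the residue $r_0 := j_1 \bmod 2(n-1)$ has multiplicity $2$, while the residues $r_0 + 2, \ldots, r_0 + 2(n-2)$ each have multiplicity $1$.

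The crucial actor is the Gornik cocycle $g_0 = \bigotimes_c (1 - x_c^{n-1})$. The identity $x(1 - x^{n-1}) = x - x^n = 0$ in $\C[x]/(x^n-x)$ shows that $x \cdot g_0 = 0$, so $[g_0]$ generates the $0$-eigenspace of the $x$-action on $H_{x^n-x}(K)$. Inspecting the $q$-degrees of the monomial terms of $g_0$ (using the shift $(1-n)(N + w(D))$, together with the standard fact that $N + w(D)$ is odd for any knot diagram) shows that every monomial term of $g_0$ lies in $q$-degree $\equiv n - 1 \pmod{2(n-1)}$, so $[g_0] \in H_{n-1}$. On the other hand, the congruence $r_0 \equiv n - 1 \pmod{2(n-1)}$ is readily seen to be equivalent to $2(n-1)s_n(K) \equiv 0 \pmod{2(n-1)}$, i.e., to $s_n(K) \in \Z$. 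Hence under our assumption that $s_n(K) \notin \Z$, the residue $n - 1$ has multiplicity $1$, and $H_{n-1}$ is spanned by $[g_0]$.

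For the contradiction, take any non-zero root $\omega$ of $x^n - x$ and consider the $\omega$-eigenvector $v = [g_\omega]$ of the $x$-action. Since multiplication by $x$ on a marked circle is a strict $q$-degree $+2$ chain map, the $x$-action on cohomology restricts to maps $H_k \to H_{k+2}$. Decomposing $v = \sum_k v_k$ with $v_k \in H_k$, the eigenvector equation $xv = \omega v$ yields $x v_k = \omega v_{k+2}$ in each graded piece; since $\omega \neq 0$ and $v \neq 0$, a short cyclic argument (using $v_{k + 2(n-1)} = v_k$) shows that the support of $v$ is a full parity class in $\Z/2(n-1)\Z$. As the hit residues also form a parity class of size $n - 1$, the two classes coincide, and in particular $v_{n-1} \neq 0$. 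Writing $v_{n-1} = c [g_0]$ with $c \neq 0$, we have $x v_{n-1} = c \, x [g_0] = 0$, contradicting $x v_{n-1} = \omega v_{n+1} \neq 0$. The main technical step is checking that the $x$-action is strictly $q$-degree $+2$ (so that it descends to graded degree $+2$ on cohomology) and locating $[g_0]$ precisely in the graded piece labelled $n-1$; both reduce to careful bookkeeping with the $q$-degree shifts in the definition of $C_{\partial w}(D)$.
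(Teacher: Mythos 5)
Your proof is correct, and although it starts from the same ingredients as the paper's --- the $\Z/2(n-1)\Z$-grading of \Cref{prop:mod}, the Gornik generators, and the observation that the root-$0$ generator is homogeneous of cyclic degree $n-1$ --- the mechanism for the contradiction is genuinely different. The paper pins down the dimension of the cyclic-degree-$(n-1)$ piece of $H_{x^n-x}(K)$ directly, by invoking an explicit homogeneous basis for the span of the generators attached to the nonzero roots (Lemma 2.4 of \cite{Lobb3}) together with \Cref{reducedsymmetry} and \Cref{prop:relation red unred}; it concludes that this piece is $2$-dimensional, so in a shifted-unknot shape the doubled residue $2(n-1)s_n(K)+n-1$ must equal $n-1$, forcing $s_n(K)\in\Z$. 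You instead exploit the $\C[x]/\partial w$-module structure: multiplication by $x$ shifts the cyclic degree by $2$, so an eigenvector for a nonzero eigenvalue must be supported on a full parity class of residues, while the $0$-eigenvector $[g_0]$ is concentrated in degree $n-1$; under the assumption $s_n(K)\notin\Z$ the degree-$(n-1)$ piece would be spanned by $[g_0]$ alone and hence killed by $x$, contradicting the eigenvector relation. This is more self-contained (no appeal to the external lemma or to the reduced invariants) and in effect reproves the relevant part of that lemma. Two minor points of phrasing: multiplication by $x$ is \emph{not} strictly of $q$-degree $+2$ for the potential $x^n-x$ (the relation $x\cdot x^{n-1}=x$ drops the filtration degree by $2(n-1)$); it is filtered of degree $2$ and homogeneous of degree $2$ only with respect to the cyclic grading, which is all your argument uses. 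Likewise $H_k$ is better described as the cohomology of the cyclic-degree-$k$ summand of the cochain complex than as ``the classes of filtration degree $\equiv k$''; the identity $\dim H_k = \#\{i : j_i \equiv k \pmod{2(n-1)}\}$ that you actually use is nonetheless correct.
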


\begin{proof}[{Proof of \protect\Cref{prop:integraltildesn} and \protect\Cref{prop:fractionalsn}}]
First observe that the potential $x^n - x$ is of the form considered in \Cref{prop:mod}, so that the cohomology is $\Z/2(n-1)\Z$-graded.

The roots of the potential are $0, \xi^0, \xi^1, \xi^2, \ldots, \xi^{n-2}$ where $\xi = e^{2\pi i/(n-1)}$.  If we are given a diagram $D$, then corresponding to these roots are the Gornik cocycles which generate the cohomology -- we shall write these as $g, g_0, g_1, \ldots, g_{n-2}$.

Each Gornik generator is an element of the cochain group summand $S$ corresponding the oriented resolution of $D$.  If the oriented resolution of $D$ has $c$ components then, ignoring the overall quantum shift, the corresponding cochain group summand is the vector space

\[ S = \frac{\C[x_1]}{x_1^n -x_1} \otimes \cdots \otimes \frac{\C[x_c]}{x_c^n - x_c} {\rm .} \]

The generators $g_i$ are given by

\[ g_i = \frac{(x_1^n - x_1) \cdots (x_c^n - x_c)}{(x_1 - \xi^i) \cdots (x_c - \xi^i)} {\rm .}\]

We claim that the vector space $G = \langle g_0, \ldots, g_{n-2} \rangle$ has a basis $h_0, h_1, \ldots, h_{n-2}$ where $h_i$ is a homogeneous polynomial of degree $2i$ with respect to the cyclic grading $\Z/2(n-1)\Z$ inherited from the usual $\Z$-grading on $\C[x_1, \ldots, x_c]$.  The proof is a straightforward check and an explicit argument is given \emph{mutatis mutandis} in Lemma 2.4 of \cite{Lobb3}.

Putting back in the overall quantum shift, it follows that the associated $\Z/2(n-1)\Z$-graded vector space to the 1-codimensional subspace
\[
\langle [g_0], \ldots, [g_{n-2}] \rangle \subset H_{x^n - x}(D)
\]
is 1-dimensional in each even grading if $n$ is odd and in each odd grading if $n$ is even.  Furthermore, using \Cref{reducedsymmetry} we see that $s_{x^n - x, \xi^i}$ is independent of choice of $i$.  This implies via \Cref{prop:relation red unred} that the associated graded vector space to the 1-codimensional subspace above has Poincar\'{e} polynomial $q^r(1 + q^2 + \cdots + q^{2n-4})$ for some integer $r$.

The generator $g$ is homogeneous of grading $n-1$ with respect to the $\Z / 2(n-1)\Z$-grading on the cochain complex.  By the definition of $\tilde{s}_{x^n - x, 0}$, it follows immediately that $\tilde{s}_{x^n - x, 0}$ is always integral.

Moreover, since the cyclic grading $n-1$ is 2-dimensional in unreduced cohomology, we see that if the Poincar\'{e} polynomial of the associated graded vector space to $H_{x^n - x}(K)$ is of the form $q^{2r}(q^{1-n} + q^{3-n} + \cdots + q^{n-1})$ for some integer $r$, then we must have that $2(n-1)$ divides $r$.

On the other hand, the Poincar\'{e} polynomial of the associated graded vector space to $H_{x^n - 1}(K)$ is exactly $q^{2(n-1)s_n(K)}(q^{1-n} + q^{3-n} + \cdots + q^{n-1})$.  Hence if $s_n(K)$ is not an integer, we must have

\[ H_{x^n - x}(K) \not\equiv H_{x^n - 1}(K) {\rm .} \]
\end{proof}

\section{Comparing different potentials}
\label{subsec:comparison}
\subsection{The KR-equivalence classes}

\begin{definition}
We call two potentials $\partial w$ and $\partial w'$
\emph{KR-equivalent over a link $L$},
denoted by $\partial w \sim_L \partial w'$,
if
$C_{\partial w}$ and $C_{\partial w'}$ are cochain homotopy equivalent over $\mathbb{C}$.
Furthermore, we call those potentials \emph{KR-equivalent}, denoted by $\partial w \sim \partial w'$,
if they are KR-equivalent over all links.
\end{definition}
This section is devoted to investigating the space of KR-equivalence classes.  In this paper we restrict ourselves to the unreduced case, but the reduced case is also interesting.  In particular, note that unreduced cohomologies with KR-equivalent potentials are filtered isomorphic,
but the corresponding reduced cohomologies need not be.

Throughout this section, we will frequently use the following graded rings:
\begin{align*}
R_n = \mathbb{C}[a_0, \ldots, a_{n-1}],\quad & \deg a_i = 2(n-i), \\
R_n[x],\quad & \deg x = 2, \\
R_n[x]/p,\quad & p = x^n + a_{n-1}x^{n-1} + \ldots + a_0.
\end{align*}
\begin{theorem}[\cite{krasnerEquivariant}, cf. also \cite{wuequivariant}]
There is an \emph{equivariant $\sl_n$-cohomology} theory as follows:
to a marked diagram $D$ of a link $L$, a finite-dimensional graded cochain complex $C_{\eqva}(D)$ of free $R_n[x]/p$-modules is associated,
such that complexes of equivalent marked diagrams are homotopy equivalent over $R_n[x]/p$.
We will denote its cohomology by $H_{\eqva}(L)$.
Evaluating by $e: R_n\to\mathbb{C}$ gives a
filtered cochain complex of free $\mathbb{C}[x]/\partial w$-modules
with $\partial w = x^n + e(a_{n-1})x^{n-1} + \ldots + e(a_0)$,
that is the usual $\sl_n$-complex with potential $\partial w$.
\end{theorem}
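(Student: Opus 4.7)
The plan is to follow the matrix factorization construction of Krasner, working universally over the graded polynomial ring $R_n$. The central object is the universal potential $w\in R_n[x]$ of degree $2n+2$ characterized by $\partial w/\partial x = p$; explicitly,
\[
w = \tfrac{1}{n+1}x^{n+1} + \tfrac{1}{n}a_{n-1}x^n + \cdots + a_0 x.
\]
To each elementary piece of a marked diagram I would associate, exactly as in \cite{KR1}, a finite-rank matrix factorization with potential a signed sum of copies of $w$ evaluated at the endpoint variables, but taken over $R_n[x_1,\ldots,x_r]$ rather than $\mathbb{C}[x_1,\ldots,x_r]$. Composing these factorizations along the diagram and taking the cube of resolutions produces the desired cochain complex $C_{\eqva}(D)$ of free $R_n[x]/p$-modules, once all endpoint variables except the one at the marked point have been closed off; finite-dimensionality follows because each cube vertex is a tensor product of finitely many Koszul-type factorizations.

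The bulk of the work, and the step I expect to be the main obstacle, is verifying Reidemeister invariance. The strategy is to lift the explicit MOY-type local isomorphisms of \cite{KR1} to the equivariant setting. Those isomorphisms are established by direct matrix computations and Gauss elimination (\Cref{lem:gauss}); the essential observation is that they are polynomial identities in the coefficients of the potential, so they remain valid after base change to $R_n$ provided no division by non-unit elements ever enters. Since only numerical factors independent of the $a_i$ appear in Krasner's proofs, the arguments lift cleanly, but considerable bookkeeping is required to confirm that every explicit homotopy used is $R_n$-linear and that every normalizing denominator is a nonzero constant of $\mathbb{C}$, not a polynomial in the $a_i$ that could vanish under specialization.

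For the specialization statement, any $\mathbb{C}$-algebra map $e\colon R_n\to\mathbb{C}$ makes $\mathbb{C}$ into an $R_n$-module, and the functor $-\otimes_{R_n}\mathbb{C}$ sends $C_{\eqva}(D)$ to a complex of $\mathbb{C}[x]/\partial w$-modules with $\partial w = p|_{a_i\mapsto e(a_i)}$. By construction this is precisely the Khovanov--Rozansky complex for the potential $\partial w$, since both sides arise from the same cube of matrix factorizations after evaluating the universal potential $w$ at the chosen coefficients. Because $e$ generally does not preserve the grading (the image values $e(a_i)\in\mathbb{C}$ carry no intrinsic degree), the internal $R_n$-grading on $C_{\eqva}(D)$ descends only to a quantum filtration on $C_{\partial w}(D)$, recovering exactly the filtered cochain-homotopy type used throughout the preceding sections.
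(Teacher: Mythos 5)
The paper states this theorem as a citation to \cite{krasnerEquivariant,wuequivariant} and gives no proof of its own, so there is no internal argument to compare your sketch against. Your plan follows exactly the route taken in those references: lift the Khovanov--Rozansky matrix-factorization construction to the base ring $R_n$ with the universal potential $w$ satisfying $\partial w/\partial x = p$, verify that the local MOY isomorphisms and the Reidemeister homotopy equivalences continue to hold over $R_n$, and observe that applying $-\otimes_{R_n}\C$ along $e$ recovers the ordinary $\sl_n$-complex with its internal grading degraded to a quantum filtration (because $e$ is filtered rather than graded). That is a correct proof outline and the same approach as the cited sources.

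Two small points are worth sharpening. First, ``finite-dimensional'' in the statement must be read as finite rank over $R_n[x]/p$ (equivalently, finitely generated free over $R_n$ after forgetting the action of $x$); it is not finite $\C$-dimension, so your phrase ``finite-dimensionality follows\ldots'' should be adjusted. Second, to see that $C_{\eqva}(D)$ is a complex of free $R_n[x]/p$-modules --- not merely $R_n$-modules --- one needs the equivariant MOY decompositions performed away from the marked point, which reduce each vertex of the cube to a direct sum of quantum-shifted copies of $R_n[x]/p$. Those decompositions over $R_n$ are exactly what is recorded in \Cref{lem:moy}; this is the nontrivial input you correctly flag as ``the main obstacle,'' and it is where the actual work of \cite{krasnerEquivariant} is concentrated. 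Your sketch is therefore complete in structure, but the bookkeeping you defer is precisely the mathematical content that makes the theorem a theorem rather than an observation.
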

Equivariant cohomology is in a sense a \emph{universal} $\mathfrak{sl}_n$-homology, from which
the reduced and unreduced cohomologies and spectral sequences for all potentials of degree $n$ can be recovered.
\begin{proposition}\label{prop:linear_equivalence}
Suppose that $D$ is knot diagram, $\partial w_1$ is a degree $n$ potential, and that we define another degree $n$ potential $\partial w_2$ by

\[ \partial w_2 (x) = \frac{1}{a^n} \partial w_1 (ax + b) {\rm ,} \]
where $a,b \in \C$ with $a \not= 0$.  Then:
\begin{enumerate}\renewcommand{\theenumi}{\roman{enumi}}
\item There is a filtered cochain homotopy equivalence $\varphi: C_{\partial w_1}(D) \to C_{ \partial w_2}(D)$.
\item %
$\varphi(x \cdot c) = (ax + b)\cdot \varphi(c)$.
\end{enumerate}
\end{proposition}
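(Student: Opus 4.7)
The plan is to construct $\varphi$ as an explicit isomorphism of filtered cochain complexes, coming from the affine substitution of edge variables $x_e \mapsto a x_e + b$ applied uniformly to every edge of every MOY resolution appearing in the Khovanov-Rozansky cube of $D$. By construction of $\partial w_2$ there is a constant $c \in \C$ with $w_1(ax+b) = a^{n+1} w_2(x) + c$; since the Khovanov-Rozansky construction depends on $w$ only through its renormalized derivative $\partial w$ (as recalled in \Cref{subsec:history}), and an additive constant in $w$ is invisible to it, substituting $a x_e + b$ for each edge variable in the $\partial w_1$ complex should reproduce the $\partial w_2$ complex on the nose.

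Concretely, at each MOY resolution $\Gamma$ of $D$ the summand $h_{\partial w_1}(\Gamma) \subseteq C^i_{\partial w_1}(D)$ is a Koszul matrix factorization over $\C[x_e : e \in \Gamma]$ whose structure maps are polynomial in the edge variables $x_e$ and in $\partial w_1(x_e)$. The substitution $x_e \mapsto a x_e + b$ takes this to the analogous summand built from $\partial w_2$, and since the cube differentials encoding crossings and the isomorphisms encoding MOY simplifications are likewise polynomial in the edge variables, these substitutions assemble into a well-defined cochain map $\varphi : C_{\partial w_1}(D) \to C_{\partial w_2}(D)$. Filteredness is then automatic: $x \mapsto ax + b$ preserves the filtration by quantum degree, because $b$ lies in degree $0$ and $ax$ in degree $2$. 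Applying the construction with the inverse affine map $(a',b') = (1/a, -b/a)$ yields a two-sided inverse $\psi$, exhibiting $\varphi$ as a filtered isomorphism and hence, a fortiori, a filtered cochain homotopy equivalence. Property (ii) is then immediate: multiplication by the marked-edge variable in $C_{\partial w_1}$ corresponds under $\varphi$ to multiplication by $ax + b$ in $C_{\partial w_2}$.

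The main obstacle is the bookkeeping: verifying that every ingredient of the Khovanov-Rozansky construction---the Koszul differentials for thick edges, the per-crossing chain maps, and the MOY simplification isomorphisms---is natural under the affine substitution, and that the factors of $a^{n+1}$ produced by rescaling the potential cancel consistently across the whole cube. A cleaner route is to work equivariantly: the affine substitution $x \mapsto ax+b$ extends to a graded automorphism of $R_n[x]/p$ sending $a_i$ to the $i$-th coefficient of $\frac{1}{a^n}\, p(ax+b)$, and this automorphism intertwines the evaluations $R_n \to \C$ corresponding to $\partial w_1$ and $\partial w_2$. Applying it to $C_{\eqva}(D)$ and then specializing yields $\varphi$ at once, with filteredness and the naturality statement (ii) both inherited from the corresponding structures on $C_{\eqva}(D)$.
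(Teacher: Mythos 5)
Your main argument---the affine substitution $x_e \mapsto a x_e + b$ on every edge variable of every resolution, so that the potential transforms as $w_1(ax+b) = a^{n+1} w_2(x) + c$, with the additive constant harmless and the overall scalar absorbable by rescaling the Koszul factors---is precisely the construction behind Wu's \cite[Proposition 1.4]{genericwu}, which is all the paper offers in proof of part~(i); part~(ii) is then read off from the construction of $\varphi$, exactly as you do. So the main route is correct and matches the paper.

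The equivariant shortcut at the end, however, is mis-stated. With $\sigma(x) = ax + b$ and $\sigma(a_i)$ defined to be the $x^i$-coefficient of $\tfrac{1}{a^n}p(ax+b)$, one computes
\[
\sigma(p) \;=\; (ax+b)^n + \sum_i \sigma(a_i)(ax+b)^i \;=\; \tfrac{1}{a^n}\,p\bigl(a^2x + ab + b\bigr),
\]
which does not lie in the ideal $(p)$, so this $\sigma$ does not descend to an endomorphism of $R_n[x]/p$. To repair it, keep $\sigma(a_i)$ as you defined it but take $\sigma(x) = (x-b)/a$; then $\sigma(p) = a^{-n}p$ and $\sigma$ descends to an automorphism of $R_n[x]/p$ satisfying $e_1 \circ \sigma = e_2$ on $R_n$. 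Also note that for $b \neq 0$ this $\sigma$ is a \emph{filtered}, not graded, ring automorphism: the image of each $a_i$ is an inhomogeneous element of $R_n$. That is still enough to produce a filtered (rather than graded) equivalence of the specialized complexes, which is what the proposition asserts, but the word ``graded'' is wrong as written.
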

\begin{proof}
The first part is due to Wu, \cite[Proposition 1.4]{genericwu}. The second part follows immediately from the construction of $\varphi$.
\end{proof}
This implies that every potential is KR-equivalent to a potential whose $x^{n-1}$-coefficient is zero. Another corollary is the following:
\begin{corollary}\label{reducedsymmetry}
Let $\alpha_1, \ldots, \alpha_n$ be the roots of $\partial w_1$. Then
\[
\widetilde{s}_{\partial w_1(x), \alpha_i}(K) = \widetilde{s}_{\partial w_2, a^{-1}(\alpha_i - b)}(K).
\]
\end{corollary}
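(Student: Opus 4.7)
The plan is to transport the filtered cochain homotopy equivalence $\varphi$ of \Cref{prop:linear_equivalence} to the reduced subcomplex. Write $\beta_i = a^{-1}(\alpha_i - b)$; since $\partial w_2(x) = a^{-n}\partial w_1(ax+b)$, the $\beta_i$ are exactly the roots of $\partial w_2$, so the statement makes sense. The goal is a filtered cochain homotopy equivalence
\[
\widetilde{C}_{\partial w_1,\alpha_i}(D) \longrightarrow \widetilde{C}_{\partial w_2,\beta_i}(D),
\]
from which the equality of the reduced $s$-invariants follows after passing to $E_\infty$.

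The central algebraic observation is that the identity $\partial w_1(ax+b) = a^n\partial w_2(x)$ combined with $(ax+b)-\alpha_i = a(x-\beta_i)$ gives
\[
\frac{\partial w_1(ax+b)}{(ax+b)-\alpha_i} = a^{n-1}\,\frac{\partial w_2(x)}{x-\beta_i}.
\]
Iterating part~(ii) of \Cref{prop:linear_equivalence} shows that $\varphi(q(x)\cdot c) = q(ax+b)\cdot\varphi(c)$ for every polynomial $q$. Applied with $q(y) = \partial w_1(y)/(y-\alpha_i)$, this yields
\[
\varphi\!\left(\tfrac{\partial w_1(x)}{x-\alpha_i}\cdot c\right) = a^{n-1}\,\tfrac{\partial w_2(x)}{x-\beta_i}\cdot \varphi(c),
\]
so $\varphi$ carries the subspace $\bigl(\partial w_1/(x-\alpha_i)\bigr)C_{\partial w_1}(D)$ into $\bigl(\partial w_2/(x-\beta_i)\bigr)C_{\partial w_2}(D)$; the nonzero scalar $a^{n-1}$ gets absorbed into the submodule. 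The common quantum shift $[1-n]$ in the definition of $\widetilde{C}$ is preserved because $\varphi$ is filtered of degree~$0$.

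To upgrade this from a filtered restriction to a filtered cochain homotopy equivalence of subcomplexes, I would invoke the same restriction principle used in \Cref{sec:red_slice_torus} for invariance under marked-diagram changes: the action of $\C[x]/\partial w$ on $C_{\partial w}(D)$ is preserved by $\varphi$ up to the algebra isomorphism $\C[x]/\partial w_1 \to \C[x]/\partial w_2$ sending $x$ to $a^{-1}(x-b)$, and this isomorphism sends the ideal $\bigl(\partial w_1/(x-\alpha_i)\bigr)$ onto the ideal $\bigl(\partial w_2/(x-\beta_i)\bigr)$. Hence $\varphi$ together with its homotopy inverse (which intertwines $x$ with $a^{-1}(x-b)$ by the same reasoning applied in reverse) restricts to give the desired filtered cochain homotopy equivalence of reduced complexes.

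The main point of care is making sure the homotopy inverse to $\varphi$ also respects the reduced subcomplex; this is exactly the type of ideal-restriction argument already used just after the definition of $\widetilde{H}_{\partial w,\alpha}$, so no new input is required. Taking cohomology, the one-dimensional vector spaces $\widetilde{H}^0_{\partial w_1,\alpha_i}(K)$ and $\widetilde{H}^0_{\partial w_2,\beta_i}(K)$ are supported in the same quantum filtration degree, so their normalized quantum degrees agree, which is the stated equality.
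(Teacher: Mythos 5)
The paper states this as an immediate corollary of \Cref{prop:linear_equivalence} without spelling out the argument, and your proof supplies exactly what is implicit there: iterate part~(ii) to get $\varphi(q(x)\cdot c)=q(ax+b)\cdot\varphi(c)$, observe that $\partial w_1(ax+b)/\bigl((ax+b)-\alpha_i\bigr)=a^{n-1}\partial w_2(x)/(x-\beta_i)$, and conclude that $\varphi$ (and its inverse, obtained by running Prop.~\ref{prop:linear_equivalence} with the inverse affine substitution) restricts to a filtered equivalence of reduced subcomplexes. Your write-up is correct and matches the intended derivation.
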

So far, particular attention has been focused on potentials of the following kind:
\begin{definition}
We call $\partial w$ a \emph{Gornik potential} if for some $\beta, \gamma \in \C$ with $\gamma \not= 0$ we have

\[ \partial w = (x - \beta)^n - \gamma {\rm .} \]
\end{definition}
From the previous proposition, we get:
\begin{corollary}
\label{gornikniceness}
Suppose that $K$ is a knot, $\partial w$ is a Gornik potential and $\alpha$ is any root of $\partial w$, then we have
\begin{enumerate}
\item $H_{\partial w}(K) \equiv H_{x^n - 1}(K)$ as a filtered vector space,
\item $\reds_{\partial w, \alpha} (K) = s_n (K)$.
\end{enumerate}
\end{corollary}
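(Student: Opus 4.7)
The plan is to realize the Gornik potential $\partial w = (x-\beta)^n - \gamma$ as an affine reparametrization of the standard potential $x^n - 1$, and then invoke \Cref{prop:linear_equivalence} together with \Cref{reducedsymmetry}. Both parts of the corollary will follow immediately once the change of variables is exhibited.

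First I would solve for $a \in \C^*$ and $b \in \C$ so that, setting $\partial w_1 = x^n - 1$ and $\partial w_2 = \partial w$, the relation $\partial w_2(x) = a^{-n}\,\partial w_1(ax + b)$ required by \Cref{prop:linear_equivalence} holds. Expanding, one has $a^{-n}\bigl((ax+b)^n - 1\bigr) = (x + b/a)^n - a^{-n}$, so matching to $(x-\beta)^n - \gamma$ forces $b = -a\beta$ and $a^n = 1/\gamma$. The hypothesis $\gamma \neq 0$ that is built into the definition of a Gornik potential is precisely what guarantees that such an $a$ exists. With this choice in hand, \Cref{prop:linear_equivalence}(i) provides, for any diagram $D$ of $K$, a filtered cochain homotopy equivalence $\varphi\colon C_{x^n - 1}(D) \to C_{\partial w}(D)$, and passing to cohomology yields the filtered isomorphism $H_{\partial w}(K) \equiv H_{x^n - 1}(K)$ asserted in part (1).

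For part (2), given a root $\alpha$ of $\partial w$, set $\zeta = a\alpha + b$. Since $\partial w_2(\alpha) = 0$ forces $\partial w_1(\zeta) = 0$, $\zeta$ is an $n$-th root of unity, and by construction $\alpha = a^{-1}(\zeta - b)$. \Cref{reducedsymmetry} then gives
\[
\widetilde{s}_{\partial w, \alpha}(K) = \widetilde{s}_{x^n - 1, \zeta}(K),
\]
and the right-hand side equals $s_n(K)$ by \Cref{cor:stdredSS}, whose proof shows that $\widetilde{s}_{x^n-1,\zeta}$ is independent of the chosen root and coincides with $s_n$. This completes the reduction.

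There is no genuine obstacle here beyond spotting the correct affine substitution; the whole argument is a short exercise in matching coefficients plus citation of the two pieces of machinery established earlier in the section. The only mild subtlety is the need for the scaling factor $a$ to be nonzero, and this is exactly the content of $\gamma \neq 0$.
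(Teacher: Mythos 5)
Your proof is correct and is essentially the argument the paper intends: the paper simply states that \Cref{gornikniceness} follows "from the previous proposition" (\Cref{prop:linear_equivalence} and its consequence \Cref{reducedsymmetry}), and you have spelled out the affine change of variables $a^n = 1/\gamma$, $b = -a\beta$ that makes this reduction go through, together with the correct invocation of \Cref{cor:stdredSS} for the root-independence of $\widetilde{s}_{x^n-1,\zeta}$.
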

Let us give a geometric interpretation of the situation for separable potentials:
a separable potential can be given by its set of roots in the complex plane.
If two such sets are related by an affine symmetry of the plane then their corresponding potentials are KR-equivalent.
In particular, there is only one KR-equivalence class of potentials of degree 2,
as was first proved in \cite{MTV} -- all potentials of degree 2 are Gornik.
For higher degrees, however, the situation is more complicated. For
the main result of this section, identify the set of polynomials of degree $n$ with $\mathbb{C}^n$ and endow it with the Zariski topology.
\begin{theorem}\label{thm:gen}
For a fixed link $L$ and a fixed $n$, there are only finitely many KR-equivalence classes of
polynomials of degree $n$ over $L$.
One of these classes is generic in the sense that all other classes
are finite unions of intersections of an Zariski-open set with a Zariski-closed set that is not $\mathbb{C}^n$.
\end{theorem}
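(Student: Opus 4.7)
The plan is to work with Krasner's equivariant complex $C_{\eqva}(D)$: a finite complex of finitely generated free graded $R_n$-modules whose specialisation at a point $e \in \C^n = \operatorname{Spec} R_n$ recovers $C_{\partial w}(D)$, where $\partial w$ is the potential whose coefficients are $e$. Since KR-equivalence is only defined up to homotopy over $\C$, I may forget the $R_n[x]/p$-module structure and retain only the filtered $R_n$-module structure.

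First I would iteratively apply \Cref{lem:gauss} to $C_{\eqva}(D)$ over $R_n$: whenever an entry of a differential has its leading term (with respect to the quantum filtration) equal to a nonzero scalar in $\C^\times$, cancel it. This terminates, leaving a \emph{generically minimal} complex $C^{\min}$ whose nonzero differential entries have leading coefficients $f_1, \ldots, f_N \in R_n$ that are all non-constant polynomials. On the Zariski-open set $U \subseteq \C^n$ on which all $f_\ell$ are simultaneously nonzero, no further Gauss elimination is possible after specialisation; hence the filtered cochain homotopy type of $e^* C^{\min}$ is independent of $e \in U$, and this locus defines the generic KR-equivalence class.

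Next I would stratify the complement $\C^n \setminus U$ by recording which $f_\ell$ vanish at $e$. Each nonempty subset $S \subseteq \{1, \ldots, N\}$ yields a stratum $V_S = \{e \in \C^n : f_\ell(e) = 0 \iff \ell \in S\}$, which is the intersection of a Zariski-open set with the proper Zariski-closed set $\{f_\ell = 0 : \ell \in S\}$. On $V_S$ one can restart the Gauss elimination for $C^{\min}$, now cancelling those entries whose leading terms have become nonzero scalars after specialisation; this produces new leading-coefficient polynomials which further subdivide $V_S$. Since the total rank of the complex strictly decreases with each round of elimination and is bounded by $\dim_\C C_{\partial w}(D)$, the recursion terminates after finitely many steps, partitioning $\C^n$ into finitely many constructible strata on each of which the filtered cochain homotopy type of the specialised complex over $\C$ is constant. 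The generic class is $U$; every other KR-equivalence class is a finite union of non-generic strata, each of the required form.

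The main obstacle is the verification that this "relative" Gauss elimination commutes with specialisation: for $e \in V_S$, the filtered cochain homotopy type of $e^* C^{\min}$ must agree with the type obtained by first Gauss-eliminating $C^{\min}$ over the coordinate ring of $V_S$ and then specialising at $e$. I expect this to follow formally, since filtered Gauss elimination is a natural operation on filtered complexes that respects $R_n$-linear base change, and any leading term that is a unit in $R_n$ (or in a localisation thereof) remains a unit after further base change to $\C$.
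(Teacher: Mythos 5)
Your overall strategy — work with the equivariant complex $C_{\eqva}(D)$ over $R_n$, forget $x$, Gauss-eliminate, and stratify $\C^n$ by vanishing loci of polynomial matrix entries — is exactly the one the paper takes. But there is a genuine gap in the step that defines the generic class.

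You claim that after cancelling all unit entries of $C_{\eqva}(D)$ over $R_n$, obtaining $C^{\min}$ with non-constant entries $f_1,\ldots,f_N$, ``no further Gauss elimination is possible after specialisation'' on the open set $U$ where all $f_\ell$ are nonzero, and hence the filtered homotopy type of $e^* C^{\min}$ is constant on $U$. This is not so. After specialising at $e\in U$ the entries become nonzero \emph{scalars} in strictly positive filtration degree; one must then regrade (exactly as in the successive Gauss elimination described just before the paper's proof) and these scalars become degree-zero, so further Gauss elimination is forced. Performing it introduces entries of the form $k - jh^{-1}i$ whose vanishing depends on the \emph{values} $e(f_\ell)$, not merely on their nonvanishing. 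A toy example: a differential $R_n[0]^2 \to R_n[-2]^2$ with matrix $\left(\begin{smallmatrix} a & b \\ b & a\end{smallmatrix}\right)$ has all entries nonvanishing on $\{a\neq 0\}\cap\{b\neq 0\}$, yet the rank of the $E_1$ differential (hence the filtered homotopy type of the specialised complex) jumps on the locus $a^2 = b^2$, which your $U$ does not exclude. Your later remark about restarting on the strata $V_S$ cannot rescue this, because for $e\in U$ the $f_\ell$ are all nonzero scalars — there is no well-defined ``further elimination over the coordinate ring of $U$'' unless you actually invert one entry and recurse.

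The paper avoids this problem by a double induction on (total rank, number of nonzero entries): it picks a single degree-preserving entry $p$, splits $U$ into $p^{-1}(0)$ (where $p$ may be replaced by $0$, reducing the number of nonzero entries) and its complement (where $p$ is Gauss-eliminated, reducing total rank, and the resulting entry $k-jh^{-1}i$ is cleared of denominators by multiplying through by $p$ so that the recursion stays over $R_n$). That recursion is precisely what generates the extra polynomials — minors like $a^2-b^2$ above — which you need to add to your stratification. It also yields, with little extra work, the genericity clause that at most one stratum has $Z(T_i)=\C^n$, which your write-up asserts but does not verify.

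So: same plan, but the construction of the generic class needs the full recursion, not just the first round of elimination followed by ``all $f_\ell \neq 0$''.
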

\begin{corollary}
Let $n \geq 1$ be fixed.  There are at most countably many classes of KR-equivalence.
One of these classes is generic in the sense that it contains a countable intersection of
non-empty Zarisiki-open (and thus dense) sets.
\end{corollary}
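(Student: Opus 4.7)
The plan is to prove, by Noetherian induction on Zariski-closed subsets $V \subseteq \mathbb{C}^n$, the statement $P(V)$: at most countably many KR-equivalence classes meet $V$. Applied to $V = \mathbb{C}^n$ this yields the countability assertion, and the description of the generic class emerges as a by-product. First I would dispatch the trivial reductions: $P(\emptyset)$ is vacuous, and if $V = V_1 \cup V_2$ with both $V_i \subsetneq V$, then every class meeting $V$ meets some $V_i$, so $P(V)$ follows from $P(V_1)$ and $P(V_2)$. It therefore suffices to prove the inductive step for $V$ irreducible; the zero-dimensional subcase is trivial.

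For the main step, fix $V$ irreducible of positive dimension and assume $P(V')$ for every closed $V' \subsetneq V$. Since there are only countably many links, fix an enumeration $L_1, L_2, \ldots$. For each link $L$, \Cref{thm:gen} provides a finite partition of $\mathbb{C}^n$ into constructible KR-classes; intersecting with $V$ gives a finite partition of $V$ into constructible subsets of $V$. Because $V$ is irreducible, exactly one of these subsets has Zariski-closure equal to $V$, and hence contains a non-empty Zariski-open (thus dense) subset $U_L^V \subseteq V$; all remaining pieces lie in the proper closed subvariety $Z_L^V := V \setminus U_L^V$. Setting $\widetilde U := \bigcap_L U_L^V$, any two points of $\widetilde U$ share a per-link-$L$-class for every $L$ and are therefore KR-equivalent, so $\widetilde U$ is contained in a single KR-class. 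Moreover $\widetilde U$ is non-empty, since $V \setminus \widetilde U = \bigcup_L Z_L^V$ is a countable union of proper closed subvarieties of $V$ and thus has complex Lebesgue measure zero in $V$. Every KR-class meeting $V$ other than the one containing $\widetilde U$ must meet some $Z_L^V$, so by the inductive hypothesis the total count is bounded by $1 + \sum_L \aleph_0 = \aleph_0$, establishing $P(V)$.

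Specializing to $V = \mathbb{C}^n$ now gives both halves of the corollary at once: countably many KR-classes in total, and the generic class is exhibited as the unique one containing $\bigcap_L U_L$, a countable intersection of non-empty Zariski-open (hence dense) subsets of $\mathbb{C}^n$. The main subtle point in the argument is the case $V \subseteq Z_L$ for some $L$, where the global generic per-link-$L$-class avoids $V$ altogether; one still needs a $V$-relative generic per-link-$L$-class, and this is provided by the standard constructible-set fact that in any finite partition of an irreducible variety into constructible pieces, exactly one piece has full Zariski-closure.
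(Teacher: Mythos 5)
The paper states this as a corollary of \Cref{thm:gen} and gives no proof, evidently regarding it as immediate. Your argument is correct and in fact supplies a step that is not quite as automatic as the paper's terse presentation suggests: the genericity half \emph{is} immediate (take $U_L$ a non-empty Zariski-open set contained in the generic per-$L$ class, which exists by \Cref{thm:gen}; then $\bigcap_L U_L$ is a countable intersection of dense opens, non-empty by Baire, and lies in a single KR-class), but the countability half is not, since the common refinement of countably many finite partitions of a set can have $2^{\aleph_0}$ pieces. Your Noetherian induction on Zariski-closed $V\subseteq\mathbb{C}^n$ is exactly the device that rescues this: restricting each per-link constructible partition to irreducible $V$, exactly one piece is dense (two disjoint constructible dense subsets of an irreducible variety would have intersecting dense opens), the complement of the relative generic set $\widetilde U$ in $V$ is a countable union of proper closed subvarieties $Z_L^V$, and every non-generic-over-$V$ class must meet one of them, so the inductive hypothesis applied to the $Z_L^V$ bounds the count by $1+\aleph_0\cdot\aleph_0$. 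The one cosmetic point worth smoothing is the appeal to "complex Lebesgue measure zero in $V$" for singular $V$: it is cleaner to invoke the Baire category theorem on the Euclidean topology of $V$ (a closed, hence complete, subset of $\mathbb{C}^n$), for which non-empty Zariski-open subsets are dense open. With that wording fixed, this is a complete and correct proof.
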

This notion of genericity is strong enough for example to imply that the complement of the generic
class has measure zero.
At the moment, it is not clear whether for a fixed $n$,
there is in fact an infinity of KR-equivalence classes;
in the next \Cref{subsec:bunch} we will see that there are at least $n - 1$.

In the proof, we use the strategy of \emph{successive Gauss elimination}
as described in \cite{heddni} to compute the spectral sequence. Let us briefly explain
this strategy:
our additive category of choice is finite-dimensional filtered vector spaces over a field.
Gauss elimination may be used to dispose of all isomorphisms of a cochain complex $C_0$, yielding a
homotopy equivalent cochain complex $C_0'$ whose differentials on the 0-th page of cohomology
are trivial, i.e. $E_0(C_0') = E_1(C_0')$. So it is possible to define a filtered complex $C_1$
as regrading of $C_0'$ by shifting the filtration degree of the $t$-th cohomology group down by $t$.
We have $E_k(C_1) = E_{k+1}(C_0')$. Now repeat this procedure -- let $C_1'$ be homotopy equivalent
to $C_1$ with trivial differentials on the first page, $C_2$ its regrading etc.
At some point $C_{\ell}$ will have trivial differentials, and at that point all the pages of
the spectral sequence have been computed.

On the one hand, this gives a practical algorithm to compute a spectral sequence; indeed it
is this algorithm that we use in our program \khoca, see \Cref{subsec:compu}. On the other hand,
it establishes that doing so determines the filtered homotopy type:
\begin{proposition}
Two finite-dimensional filtered cochain complexes over a field with respective spectral sequences
$E$ and $E'$ are homotopy equivalent if and only if $E_i$ and $E'_i$ are
isomorphic doubly graded vector spaces for all $i \geq 1$.
\end{proposition}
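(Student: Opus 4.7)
The plan is to prove that the multiset of pages $\{E_i\}_{i\geq 1}$ is a complete invariant of the filtered homotopy type by exhibiting a normal form for finite-dimensional filtered complexes over a field, and showing the normal form is read off directly from the pages.

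For the (easy) ``only if'' direction, a filtered cochain homotopy equivalence $\varphi: C \to C'$ restricts to an isomorphism of the associated graded complexes, hence induces a bidegree $(0,0)$ isomorphism $E_0(C) \to E_0(C')$ commuting with the $E_0$-differentials. By the naturality of spectral sequences (applied inductively on the page index), one obtains bidegree $(0,0)$ isomorphisms $E_i(C) \to E_i(C')$ for every $i \geq 1$.

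For the ``if'' direction the strategy is to extract from every finite-dimensional filtered cochain complex $C$ a canonical \emph{minimal model}. Concretely, I claim $C$ is filtered cochain homotopy equivalent to a direct sum
\[
C \simeq \bigoplus_\alpha S(i_\alpha, j_\alpha) \;\oplus\; \bigoplus_\beta T_{d_\beta}(i_\beta, j_\beta)
\]
where $S(i,j)$ is a one-dimensional complex concentrated in cohomological degree $i$ and filtration degree $j$ with zero differential, and $T_d(i,j)$ is the two-term complex with one generator in bidegree $(i,j)$ and one in bidegree $(i+1, j-2d)$ connected by a differential whose leading term is an isomorphism of filtration degree exactly $-2d$ (i.e.\ one that gets cancelled on the $d$-th page). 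The existence of this decomposition is obtained by iterating the successive Gauss elimination recalled just above the proposition: at stage $d$ one splits off, as a direct summand, one copy of $T_d(i,j)$ for each matched isomorphism pair that is eliminated on the $d$-th page, proceeding by induction on the total dimension of $C$ until only survivors $S(i,j)$ remain.

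Granted this decomposition, the ``if'' direction is immediate. The number of summands $S(i,j)$ occurring in $C$ is the dimension of $E_\infty(C)$ in bidegree $(i,j)$, while the number of summands $T_d(i,j)$ is the number of matched pairs cancelled on the $d$-th page, and this is read off as $\dim E_d^{i,j}(C) - \dim E_{d+1}^{i,j}(C)$ (counted together with the paired contribution in bidegree $(i+1,j-2d)$). Thus if $E_i(C) \cong E_i(C')$ as doubly graded vector spaces for all $i \geq 1$, the multisets of elementary summands for $C$ and $C'$ coincide, and summand-wise identifications assemble to a filtered cochain isomorphism of the minimal models, hence to a filtered cochain homotopy equivalence $C \simeq C'$.

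The main obstacle is establishing the minimal-model decomposition. The subtle point is to verify that after Gauss-eliminating all isomorphisms on page $E_d$, the cancelled pairs can genuinely be split off as direct summands (not merely cancelled homotopically) while the remaining part of the complex retains its filtered cochain homotopy type; finite-dimensionality over a field is what allows one to choose splittings of the relevant short exact sequences of filtered pieces at each stage, and the induction terminates because the total dimension strictly decreases at each Gauss-elimination step that removes a nontrivial pair.
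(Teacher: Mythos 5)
Your overall plan—reduce to a normal form built from one- and two-dimensional elementary filtered complexes via iterated Gauss elimination, then read off the multiset of summands from the pages—is exactly the argument the paper has in mind; the paper itself leaves the proposition unproven, offering only the phrase "it establishes that doing so determines the filtered homotopy type" after describing successive Gauss elimination, so your write-up supplies the details the paper omits. Two points, however, need repair.

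First, the "only if" direction contains a false claim: a filtered cochain homotopy equivalence does \emph{not} in general restrict to an isomorphism of associated graded complexes. For instance the contractible complex $T_0(0,0)$ is filtered homotopy equivalent to $0$, yet its $E_0$ page is non-zero, so no map to $0$ can be an $E_0$-isomorphism. The correct argument is to pass the chain homotopy identity $1-\psi\varphi = dH+Hd$ (with $\varphi,\psi,H$ all filtered of degree $\leq 0$) to the associated graded, obtaining that $\Gr(\varphi)$ and $\Gr(\psi)$ are mutually inverse up to chain homotopy on $(E_0,d_0)$; this yields an isomorphism on $E_1 = H(E_0,d_0)$, and then naturality propagates it to $E_r$ for all $r \geq 1$. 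Isomorphism on $E_0$ itself is genuinely false and should not be asserted.

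Second, the $T_0(i,j)$ summands in your normal form need to be addressed. Their count is governed by $\dim E_0^{i,j} - \dim E_1^{i,j}$, and the hypothesis of the proposition only gives you the pages for $i \geq 1$; so two complexes with isomorphic $E_i$ for all $i\geq 1$ may have different numbers of $T_0$'s. The fix is immediate: $T_0(i,j)$ is a filtered-contractible complex (its differential is already an isomorphism on $E_0$), so it may be discarded from the minimal model without changing the filtered homotopy type. The minimal model should therefore consist only of $S(i,j)$'s and $T_d(i,j)$'s with $d \geq 1$, and precisely these are determined—via the inductive peeling-off of source and target contributions to $\dim E_d - \dim E_{d+1}$ that your parenthetical alludes to—by the pages $E_i$, $i \geq 1$. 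With these two repairs the proof is sound, and it is the same proof the paper is gesturing at.
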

\begin{figure}[t]
\includegraphics[scale=0.25]{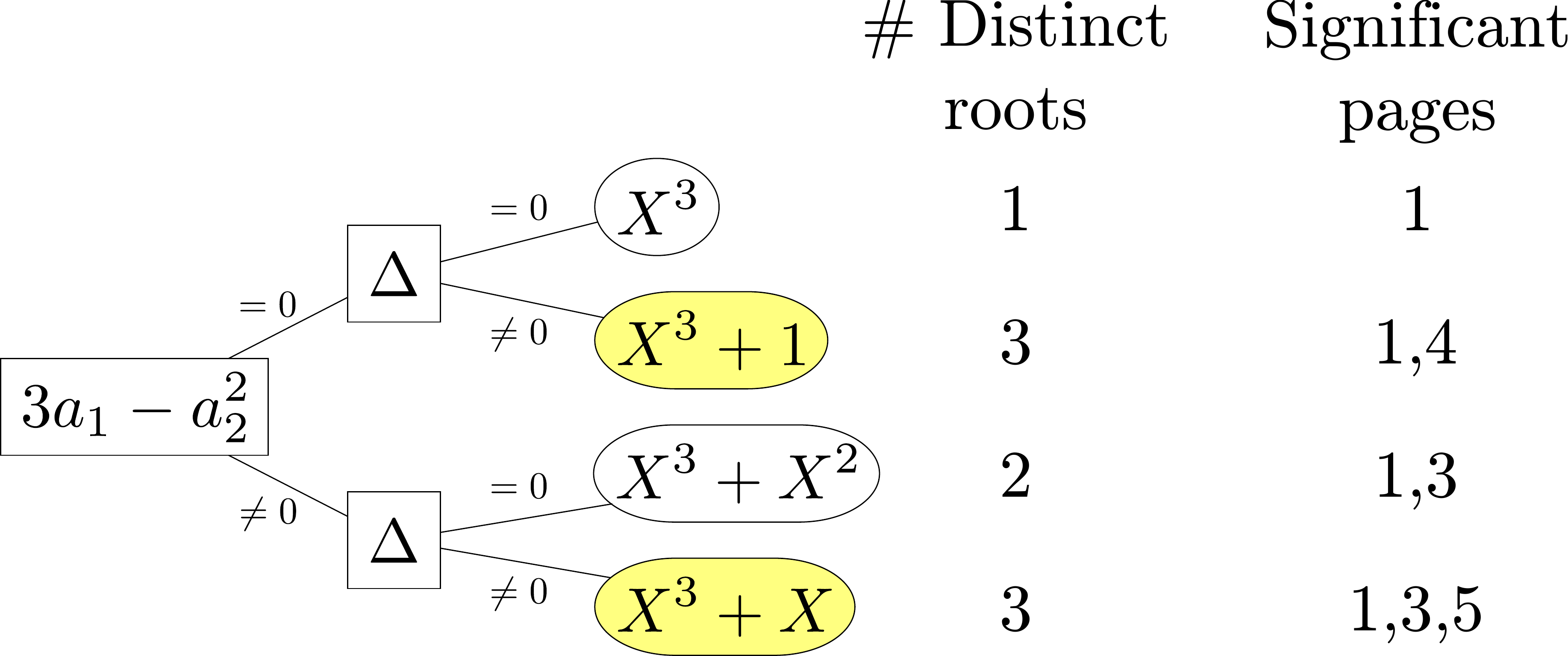}
\caption{A tree illustrating the KR-equivalence classes of the trefoil for $n = 3$. Polynomials in rectangular boxes are those on whose
vanishing the class depends, as in the proof of {\Cref{thm:gen}}; $\Delta$ denotes the discriminant. The round boxes mark representative potentials of the corresponding KR-equivalence class. Separable potentials are in yellow boxes. The generic class is at the bottom.}
\label{fig:tree3}
\end{figure}
\begin{proof}[Proof of {\Cref{thm:gen}}]
Forgetting $x$, the equivariant $\sl_n$-cochain complex $C_{\eqva}$ is a complex of free graded $R_n$-modules of finite rank,
where the $a_i$ carry a non-negative degree. That is in fact all the information
we need on $C_{\eqva}$; consider any cochain complex $C$ with these properties and a subset $U\subset\mathbb{C}^n$.
Then $U$ is divided into equivalence classes of vectors $(v_0, \ldots, v_{n-1})$,
on which evaluating by $a_i\mapsto v_i$ induces homotopy equivalent filtered cochain complexes.
Let us prove for all pairs $(C,U)$ that $U$ decomposes as disjoint union of finitely many sets $D_i$,
such that each equivalence class is the union of some $D_i$'s, and such that for each $D_i$,
one may select two sets $T_i, T_i' \subset R_n$ such that
$D_i = U \cap Z(T_i) \cap Z(T_i')^c$, where $Z(T) = \bigcap_{p\in T} p^{-1}(0)$.
Moreover, for at most one $i$ we have $Z(T_i) = \mathbb{C}^n$.
This implies the statement of the \namecref{thm:gen}.

We proceed by induction.
The statement is obviously true for a complex with trivial differentials.
Otherwise, assume that the statement holds for all pairs $(C', U')$ such
that either $C'$ has smaller total dimension than $C$; or has equal total dimension,
but fewer non-zero matrix entries.
If all degree-preserving differentials of $C$ are zero, regrade following the description of
successive Gauss elimination above.
Then pick a non-zero degree-preserving matrix entry $p$ of $C$ and consider $(C, U \cap p^{-1}(0))$.
Without changing the equivalence classes, $p$ may be replaced by zero,
so by the induction hypothesis $U \cap p^{-1}(0) = \bigsqcup_{i=1}^k D_i$,
where
\[
D_i \ =\  U \cap p^{-1}(0) \cap Z(T_i) \cap Z(T_i')^c \ =\  U \cap Z(T_i \cup \{p\}) \cap Z(T_i')^c.
\]
On the other hand, consider $(C, U \cap (p^{-1}(0))^c)$.
The polynomial $p$ does not vanish for any evaluation;
so let us perform a Gauss elimination on $p$, and multiply
the matrix with $p$ afterwards. The ensuing complex $C'$
has the same equivalence classes as $C$, because its
evaluation at any point in $U \cap (p^{-1}(0))^c$ is homotopy
equivalent to $C$. Moreover, $C'$ has smaller total dimension,
so by the induction hypothesis,
$U \cap (p^{-1}(0))^c = \bigsqcup_{j=1}^{\ell} \widetilde{D}_j$, where
\[
\widetilde{D}_j \ =\ U \cap (p^{-1}(0))^c \cap Z(\widetilde{T}_j) \cap Z(\widetilde{T}_j')^c\ =\ U \cap Z(\widetilde{T}_j) \cap Z(\widetilde{T}_j'\cup\{p\})^c.
\]
Note that $Z(T_i\cup\{p\})$ is a proper subset of $\mathbb{C}^n$ for all $i$, and at most
one of the $Z(\widetilde{T}_j\cup\{p\})$ is not. So 
\[
U = \bigsqcup_i D_i \sqcup \bigsqcup_j \widetilde{D}_j
\]
is the decomposition of $U$ whose existence was to be proven.
\end{proof}
\begin{figure}[t]
\includegraphics[scale=0.25]{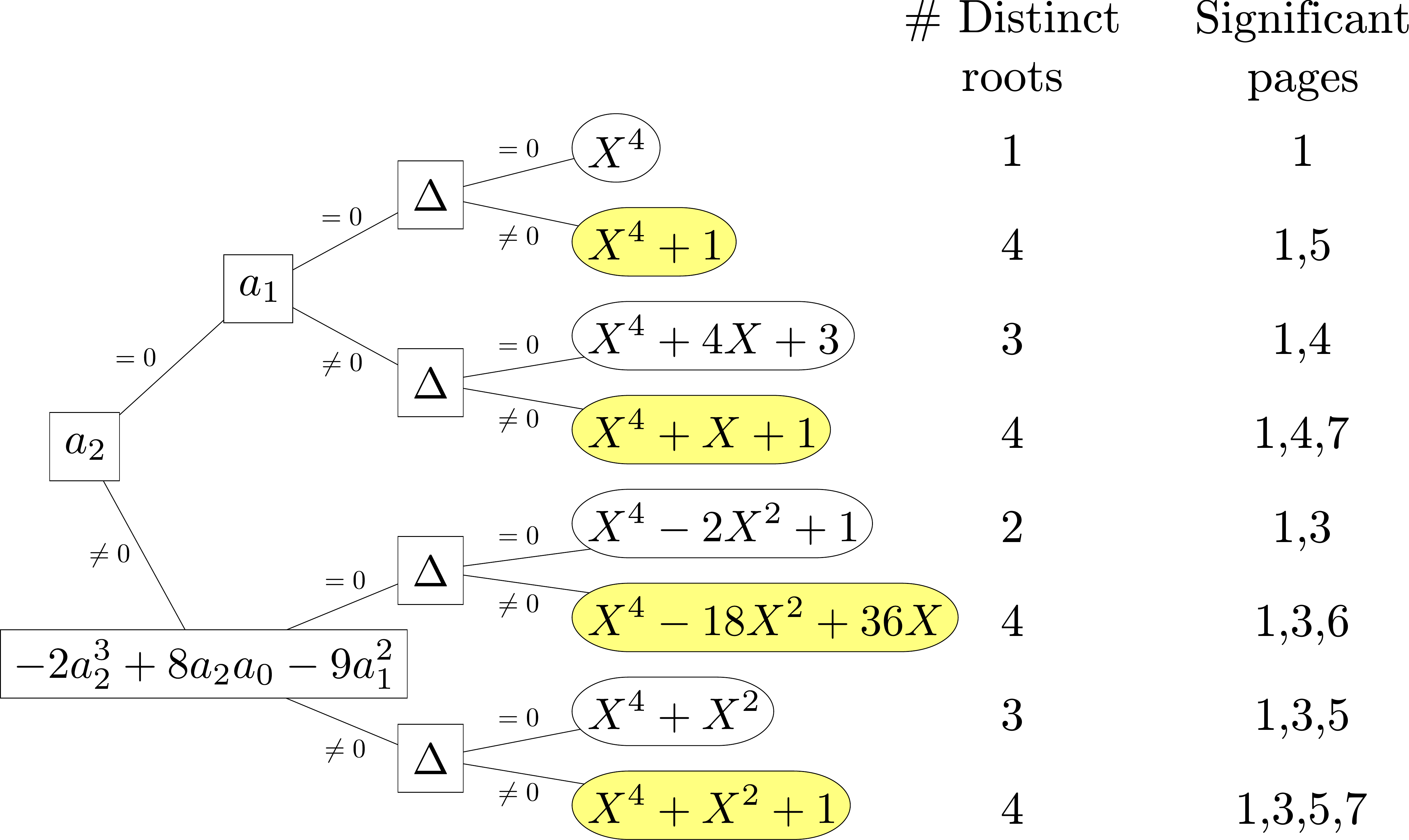}
\caption{A tree as in \Cref{fig:tree3} for the trefoil and $n = 4$. For simplicity, we have set $a_3 = 0$ from the beginning;
note that every polynomial is KR-equivalent to another with $a_3 = 0$ by \Cref{prop:linear_equivalence}.}
\end{figure}

\subsection{A lower bound on the number of KR-equivalence classes}
\label{subsec:bunch}
\begin{theorem}\label{cor:existence}\hfill
\begin{enumerate}\renewcommand{\theenumi}{\roman{enumi}}
\item There are at least $n-1$ KR-equivalence classes of separable potentials of degree $n$.
\item Gornik potentials form an equivalence class, and for $n > 2$, it is not generic.
\end{enumerate}
\end{theorem}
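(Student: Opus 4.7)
For part (ii), that all Gornik potentials are KR-equivalent is immediate from \Cref{prop:linear_equivalence}: given $\partial w_i = (x-\beta_i)^n - \gamma_i$ with $\gamma_i\neq 0$, pick $a\in\mathbb{C}^\times$ with $a^n = \gamma_1/\gamma_2$ and set $b = \beta_1 - a\beta_2$; a direct check gives $a^{-n}\partial w_1(ax+b) = \partial w_2$. To show that the Gornik class is not generic for $n>2$, I would exhibit a separable non-Gornik potential whose KR-equivalence class is different. The natural candidate is $\partial w = x^n - x$, tested on the positive trefoil $T(2,3)$. By \Cref{prop:mod} with $m=n-1$, the spectral sequence of $x^n - x$ has significant pages only at indices $i\equiv 1\pmod{n-1}$; while \Cref{gornikniceness} identifies the Gornik cohomology filtered-isomorphically with that of $x^n - 1$, whose significant pages (by \Cref{prop:mod} with $m=n$) lie at $i\equiv 1\pmod n$. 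For $n>2$ these arithmetic progressions differ, and a direct computation on $T(2,3)$ (by hand or via \khoca{}) exhibits a non-vanishing differential of $x^n - x$ on a page index forbidden to every Gornik potential. Combining this with the tree of KR-classes of the trefoil produced by the proof of \Cref{thm:gen} (illustrated in \Cref{fig:tree3} for $n=3$), whose generic class sits at the root and whose Gornik class sits at a leaf, shows that Gornik is not the generic class.

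For part (i), the plan is to exhibit $n-1$ pairwise non-KR-equivalent separable potentials by analysing the equivariant $\mathfrak{sl}_n$-complex $C_{\eqva}(T(2,3))$ of the positive trefoil. After reducing this complex to minimal form over $R_n$ via successive Gauss elimination, the proof of \Cref{thm:gen} organises the possible KR-equivalence classes of its specialisations into a tree whose nodes are locally closed strata of $\mathbb{C}^n$ cut out by the vanishing of explicit polynomials in $a_0,\ldots,a_{n-1}$. I would descend from the generic class at the root of the tree to the Gornik class along a chain of $n-1$ successive single-polynomial specialisations, and at each intermediate node select a representative potential lying in the open separable stratum. Two nodes are visible for free: the Gornik leaf, realised by $x^n-1$, and an intermediate node realised by $x^n-x$, whose cyclic symmetry is detected by \Cref{prop:mod}. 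Further intermediate nodes would be realised by separable potentials of the form $h(x^m)$ or $x\cdot h(x^m)$ for divisors $m$ of $n$ or of $n-1$, and by asymmetric separable specialisations invisible to \Cref{prop:mod}.

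The principal obstacle is realising the chain of $n-1$ specialisations entirely inside the separable stratum. The tempting candidates $x^n - c x^{n-m}$ for $2\le m\le n-2$ are non-separable (they have $0$ as a root of multiplicity $m$), so at those intermediate steps one must perturb without leaving the prescribed tree node. I expect to handle this by a dimension count: each tree node has codimension bounded by the number of vanishing conditions imposed, which grows by one at each step of the descent, while the separable stratum has codimension zero in $\mathbb{C}^n$; so each node ought to meet the separable stratum non-trivially, and the verification reduces to checking that a specific matrix entry of the minimal equivariant complex of $T(2,3)$ fails to vanish under the candidate's coefficients. This is a finite calculation, either proceeding inductively on $n$ or performed in low rank via \khoca{} and then bootstrapped using the known structure of the trefoil's $\mathfrak{sl}_n$-cohomology.
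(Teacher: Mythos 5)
Your strategy is the right one---distinguish KR-equivalence classes by examining the spectral sequences they induce over the trefoil---and the explicit check that all Gornik potentials are KR-equivalent via \Cref{prop:linear_equivalence} is correct. But there is a genuine gap at the heart of the proposal, and it is exactly where the paper does the real work.

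Both your part (i) and part (ii) arguments rest on the claim that the trefoil's spectral sequence has a significant page at a \emph{specified} index. \Cref{prop:mod} only gives you that pages \emph{off} a certain arithmetic progression are insignificant; it provides no lower bound at all, so ``the arithmetic progressions differ for $n>2$'' proves nothing until you know a significant page actually occurs at an index in one progression and not the other. You defer this to ``a direct computation on $T(2,3)$ (by hand or via \khoca{})'' and later to ``a finite calculation.'' For a fixed small $n$ this is fine, but the theorem is for all $n$, and what is needed is \Cref{prop:trefoil}: for every potential $\partial w = x^n + a_{n-2}x^{n-2} + \cdots + a_0 \neq x^n$, the first significant page of the trefoil's spectral sequence after $E_1$ is exactly $E_{\ell+1}$, where $\ell$ is the smallest positive integer with $a_{n-\ell}\neq 0$. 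Part (ii) of that lemma (the \emph{existence} of a nontrivial differential on $E_\ell$) is not a calculation for generic $n$---the paper derives it from the structure of the equivariant complex $C_{\eqva}(T_{2,3})$, the trefoil's Homflypt cohomology, \Cref{thm:folklore}, \Cref{lem:jordan}, and a careful degree argument in \Cref{lem:lin}. This is the mathematical content of the theorem and it is absent from your proposal.

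Once \Cref{prop:trefoil} is in hand, the rest becomes short and your ``tree descent'' and ``perturbation to restore separability'' are unnecessary detours. The paper simply takes $x^n+1$ and $x^n + x^i + a_0$ for $i \in \{1,\dots,n-2\}$ with $a_0$ chosen to make the polynomial separable. For $x^n+x^i+a_0$ the smallest $\ell$ with $a_{n-\ell}\neq 0$ is $\ell = n-i$ (the extra $a_0$ contributes only $\ell=n$ and cannot decrease $\ell$), so the first significant page is $E_{n-i+1}$; for $x^n+1$ it is $E_{n+1}$. These $n-1$ potentials therefore have $n-1$ pairwise distinct first-significant-page indices and are pairwise non-KR-equivalent, giving (i). For (ii), a potential with $a_{n-1}=0$ that is KR-equivalent to a Gornik one must have first significant page $E_{n+1}$, forcing $a_1 = \cdots = a_{n-2}=0$; so the Gornik class lies in a proper Zariski-closed set and cannot be generic. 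Your separability worry about $x^n - cx^{n-m}$ is real, but the paper's $+a_0$ fix together with the monotonicity built into $\ell$ dissolves it immediately---no dimension count is needed. Likewise your appeal to cyclic symmetry ($h(x^m)$, $x\cdot h(x^m)$) can only produce a number of distinguishable classes controlled by divisors and is not enough to reach $n-1$; the $\ell$-invariant from \Cref{prop:trefoil} is strictly stronger.
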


We will prove this theorem by analyzing the equivariant $\sl_n$-cohomology of the trefoil.
It is a good exercise to compute it for general $n$ using \Cref{thm:krasner};
but let us follow a different route here, which treats the cohomology theories more as a black box.
The proof is split in several lemmas, and uses the following theorem, which was proved quite recently:

\begin{theorem}[\cite{folklore}]\label{thm:folklore}
Let $\partial w$ be a potential with distinct roots $\alpha_1, \ldots, \alpha_k$.
For every link $L$, there is an isomorphism respecting the homological degree (but not the quantum degree)
\[
H_{\partial w}(L) \equiv \bigoplus_{i=1}^k H_{x^{(\mult_{\alpha_i}\partial w)}}(L).
\]
\end{theorem}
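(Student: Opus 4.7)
The plan is to compute the equivariant $\sl_n$-cochain complex $C_{U(n)}(T_{2,3})$ of the positive trefoil and track how its filtered homotopy type changes as we specialize to different separable potentials. By the proposition on filtered homotopy type equivalence stated just before the statement, this is equivalent to tracking the sequence of pages of the associated spectral sequence.

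First I would describe a small model for $C_{U(n)}(T_{2,3})$: starting from the bipartite cochain complex described in \Cref{sec:calc}, equivariant Gauss elimination over $R_n$ produces a homotopy equivalent minimal complex whose remaining differentials are explicit homogeneous polynomials $P_1, \ldots, P_N \in R_n$. For each evaluation $\mathbf{v} \in \C^n$ corresponding to a potential $\partial w = x^n + v_{n-1}x^{n-1} + \ldots + v_0$, the filtered cochain homotopy class of $C_{\partial w}(T_{2,3})$ is then determined, via the successive Gauss elimination procedure from the proof of \Cref{thm:gen}, by which of the polynomial entries vanish at $\mathbf{v}$ at each stage. I note the key structural fact that makes this workable: the $E_1$ page is the standard $H_{x^n}(T_{2,3})$ regardless of $\partial w$, and by \Cref{thm:homogeneousetc} the $E_\infty$ page is a fixed shift of $H_{x^n-1}(T_{2,3})$ for every separable $\partial w$; the discrimination between separable KR-equivalence classes therefore takes place entirely at intermediate pages.

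For part (ii), \Cref{prop:linear_equivalence} implies that any Gornik potential $(x-\beta)^n - \gamma$ is KR-equivalent to $x^n - 1$ via an affine substitution, so Gornik potentials comprise at most one KR-equivalence class. To see this class is distinct from the generic one, hence non-generic for $n > 2$, I would identify at least one polynomial entry $P_k$ in the minimal equivariant complex of $T_{2,3}$ which vanishes identically on the Gornik stratum (after translation, the affine slice $a_1 = \ldots = a_{n-1} = 0$) but is non-zero at a generic $\mathbf{v}$. The vanishing of $P_k$ produces an extra non-trivial differential on an intermediate spectral sequence page in the Gornik case which has no counterpart generically, giving a filtered-homotopy-non-equivalence even though the two complexes have isomorphic $E_1$ and $E_\infty$ pages.

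For part (i), I would exhibit a descending chain of strata inside the separable locus $\{\Delta \neq 0\} \subset \C^n$,
\[
S_0 \supsetneq S_1 \supsetneq \cdots \supsetneq S_{n-1},
\]
with $S_0 = \{\Delta \neq 0\}$ and $S_{n-1}$ containing the Gornik locus, each $S_k$ cut out from $S_{k-1}$ by the vanishing of at least one additional polynomial appearing in the Gauss-elimination tree of $C_{U(n)}(T_{2,3})$. Each successive stratum $S_{k-1} \setminus S_k$ supports a separable KR-equivalence class distinct from those above and below it, furnishing $n - 1$ distinct classes. Concrete candidate representatives can be chosen with progressively more symmetric root configurations, e.g., roots in general position, roots on a line, roots in arithmetic progression, and finally the Gornik configuration of roots at $n$-th roots of unity.

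The main obstacle will be verifying that the equivariant complex of the trefoil is rich enough to supply at least $n - 1$ genuinely distinct polynomial vanishing conditions inside the separable locus for \emph{every} $n$. For small $n$ this is immediate from the trees \Cref{fig:tree3} and its $n = 4$ analog, by simply counting the branches marked separable. For general $n$ the argument requires a structural statement about the minimal equivariant complex of $T_{2,3}$: that its differentials include $n - 1$ homogeneous polynomials whose successive zero loci strictly refine the stratification of the separable locus. Establishing this, presumably by an inductive analysis of the bipartite cochain complex of the trefoil together with the degrees forced on the $P_i$ by the equivariant grading, is where the technical heart of the proof will lie.
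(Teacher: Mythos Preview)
Your proposal does not address the stated theorem at all. The statement you were asked to prove is a decomposition result: for a potential $\partial w$ with distinct roots $\alpha_1,\ldots,\alpha_k$ (of possibly higher multiplicity), the cohomology $H_{\partial w}(L)$ splits, up to forgetting the quantum grading, as a direct sum of the cohomologies $H_{x^{m_i}}(L)$ with $m_i = \mult_{\alpha_i}\partial w$. This is a statement about an arbitrary link $L$ and an arbitrary (not necessarily separable) potential, and it has nothing to do with counting KR-equivalence classes.

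What you have written instead is a proof sketch for \Cref{cor:existence}: parts (i) and (ii) on the number of KR-equivalence classes of separable potentials and the non-genericity of the Gornik class. Your references to ``part (i)'' and ``part (ii)'', to the trefoil, to stratifying the separable locus, and to \Cref{fig:tree3} all make sense for that theorem and make no sense for the decomposition theorem above. Note also that in the paper the decomposition theorem is not proved but is cited from an external source and then \emph{used} as an input to the proof of \Cref{prop:trefoil} (and thence \Cref{cor:existence}); so your sketch, even as a proof of \Cref{cor:existence}, presupposes the very statement you were meant to establish.

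If you wish to prove the decomposition theorem itself, the relevant idea is algebraic rather than spectral-sequence-theoretic: the Chinese Remainder Theorem splits $\C[x]/\partial w \cong \bigoplus_i \C[x]/(x-\alpha_i)^{m_i}$, and this splitting is compatible with the Khovanov--Rozansky construction (the cochain complex is built from matrix factorizations over $\C[x]/\partial w$ and the differentials respect the idempotent decomposition), yielding a direct-sum decomposition of the complex itself. An affine change of variable $x \mapsto x + \alpha_i$ as in \Cref{prop:linear_equivalence} then identifies the $i$-th summand with the complex for the potential $x^{m_i}$, respecting the homological grading but not the quantum filtration.
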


\noindent Here we are writing $\mult_{\alpha}f$ to mean the multiplicity of the root $\alpha$ in the complex polynomial $f$.

\begin{lemma}\label{lem:jordan}
	Let $\partial w$ be a potential with distinct roots $\alpha_1, \ldots, \alpha_k$ and
	let $d\in \mathbb{C}[x]$. Denote the $\mathbb{C}$-endomorphism
	of $\mathbb{C}[x]/\partial w$ given by multiplication with $d$ by $M_d$. Then
	\[
	\dim_{\mathbb{C}} \ker M_d = \sum_{i=1}^k \min\{\mult_{\alpha_i}\partial w, \mult_{\alpha_i}d\}.
	\]
\end{lemma}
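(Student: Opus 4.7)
The plan is to reduce the computation to the case of a single prime power via the Chinese Remainder Theorem, then handle the single prime power case by direct computation. Since the distinct roots $\alpha_1, \ldots, \alpha_k$ of $\partial w$ give a factorization $\partial w = \prod_{i=1}^{k} (x - \alpha_i)^{m_i}$ with $m_i = \mult_{\alpha_i} \partial w$, and the factors $(x-\alpha_i)^{m_i}$ are pairwise coprime in $\mathbb{C}[x]$, CRT gives a ring isomorphism
\[
\mathbb{C}[x]/\partial w \;\cong\; \prod_{i=1}^{k} \mathbb{C}[x]/(x-\alpha_i)^{m_i}.
\]
Under this isomorphism $M_d$ splits as a direct sum of multiplication-by-$d$ maps on the factors, so
\[
\dim_{\mathbb{C}} \ker M_d \;=\; \sum_{i=1}^{k} \dim_{\mathbb{C}} \ker\bigl(M_d: \mathbb{C}[x]/(x-\alpha_i)^{m_i} \to \mathbb{C}[x]/(x-\alpha_i)^{m_i}\bigr).
\]

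It therefore suffices to prove the single-root case: for $m \geq 1$ and $\alpha \in \mathbb{C}$, $\dim_{\mathbb{C}} \ker\bigl(M_d : \mathbb{C}[x]/(x-\alpha)^m \to \mathbb{C}[x]/(x-\alpha)^m\bigr) = \min\{m, \mult_\alpha d\}$. Write $r = \mult_\alpha d$ and factor $d = (x-\alpha)^{r} u$ in $\mathbb{C}[x]$ with $u(\alpha) \neq 0$. If $r \geq m$, then $d$ vanishes in the quotient, so $M_d = 0$ and $\ker M_d$ has dimension $m = \min\{m, r\}$. If $r < m$, then $u$ is a unit in $\mathbb{C}[x]/(x-\alpha)^{m}$ (since $u(\alpha) \neq 0$ means $u$ is invertible mod any power of the maximal ideal $(x-\alpha)$), so $M_d$ factors as $M_u \circ M_{(x-\alpha)^r}$ with $M_u$ an isomorphism. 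Thus $\ker M_d = \ker M_{(x-\alpha)^r}$.

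The kernel of multiplication by $(x-\alpha)^r$ on $\mathbb{C}[x]/(x-\alpha)^m$ consists of those cosets represented by polynomials $f$ with $(x-\alpha)^r f \in (x-\alpha)^m \mathbb{C}[x]$, i.e. $f \in (x-\alpha)^{m-r}\mathbb{C}[x]/(x-\alpha)^m$. This subspace has the obvious basis $\{(x-\alpha)^{m-r}, (x-\alpha)^{m-r+1}, \ldots, (x-\alpha)^{m-1}\}$, hence dimension $r = \min\{m, r\}$. Summing these local contributions through the CRT decomposition yields the claimed formula.

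No step here is genuinely obstructive; the only thing to take a little care with is the invertibility of $u$ modulo $(x-\alpha)^m$ when $u(\alpha) \neq 0$, which follows from the standard fact that $\mathbb{C}[x]/(x-\alpha)^m$ is local with maximal ideal generated by $(x-\alpha)$ and residue field $\mathbb{C}$.
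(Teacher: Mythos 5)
Your proof is correct and follows essentially the same route as the paper: both reduce via the primary (CRT) decomposition $\mathbb{C}[x]/\partial w \cong \bigoplus_i \mathbb{C}[x]/(x-\alpha_i)^{m_i}$, after which the local computation is immediate. The paper leaves the single-factor case implicit, whereas you spell it out; there is no substantive difference.
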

\begin{proof}
	This follows from the decomposition as a $\C[x]$-module
	
	\[ \C[x]/\partial w = \bigoplus_{i=1}^k \C[x]/(x-\alpha_i)^{\mult_{\alpha_i}\partial w} {\rm .} \]
\end{proof}
\begin{lemma}\label{lem:lin}
Let
\[
q = x^{n-1} + \sum_{i=0}^{n-2} b_i x^i \in R_n[x], \quad b_i\in R_n
\]
be homogeneous of degree $2n-2$ with the following property:
for all $e: R_n\to \C$, let $\alpha_1, \ldots, \alpha_k$ be the roots of $e^*(p)$, then
\begin{equation}\tag{$\dagger$}
n - k  = 
\sum_{i=1}^{k} \min\{\mult_{\alpha_i}e^*(p), \mult_{\alpha_i}e^*(q)\}.
\end{equation}
Then the coefficient of $a_{i+1}$ as monomial in $b_i$ is not equal to $1$.
\end{lemma}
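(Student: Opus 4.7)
The plan is to derive the conclusion from a single one-parameter family of evaluations indexed by $c\in\C^*$. Fix $i\in\{0,\ldots,n-2\}$, set $K := n-i-1\geq 1$, and write $\mu := [a_{i+1}]\,b_i$ for the coefficient whose value we wish to exclude from being $1$. Consider the evaluation $e_c: R_n\to\C$ sending $a_{i+1}$ to $c$ and every other $a_j$ to $0$. Then $e_c^*(p) = x^{i+1}(x^K+c)$ has $k=n-i$ distinct roots: $0$ of multiplicity $i+1$, together with the $K$ distinct $K$-th roots of $-c$, each of multiplicity $1$. So $(\dagger)$ reduces to
\[
\min\{i+1,\,\mult_0 e_c^*(q)\} + r = i,
\]
where $r$ denotes the number of non-zero roots of $e_c^*(p)$ at which $e_c^*(q)$ vanishes.

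The next step is to compute $e_c^*(q)$ explicitly. Since $b_j$ is homogeneous of degree $2(n-1-j)$ in $R_n$ and $\deg a_{i+1} = 2K$, the only monomial of $b_j$ that survives the evaluation $e_c$ is the pure power $a_{i+1}^m$ with $mK = n-1-j$. Writing $j_m := n-1-mK$ (so $j_0=n-1$, $j_1=i$, $j_2=2i+1-n$, and so on) and with the convention $[a_{i+1}^0]b_{n-1}:=1$, one obtains
\[
e_c^*(q) \;=\; \sum_{\substack{m\geq 0\\ j_m\geq 0}} c^m\,[a_{i+1}^m]\,b_{j_m}\cdot x^{j_m}.
\]
For any non-zero root $\eta$ of $e_c^*(p)$, the relation $\eta^K = -c$ yields $c^m\eta^{j_m} = (-1)^m\eta^{n-1}$, whence
\[
e_c^*(q)(\eta) \;=\; \eta^{n-1}\,S, \qquad S := \sum_{m,\, j_m\geq 0} (-1)^m\,[a_{i+1}^m]\,b_{j_m},
\]
an expression independent of both $c$ and $\eta$. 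Thus $r=0$ if $S\neq 0$ and $r=K$ otherwise.

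From here the case analysis is short. If $S\neq 0$, then $\mult_0 e_c^*(q) = i$, which forces $[a_{i+1}^m]b_{j_m} = 0$ for all $m\geq 2$, so that $S = 1-\mu$ and hence $\mu\neq 1$. If $S=0$, then $r=K$ and $\mult_0 e_c^*(q) = 2i+1-n\geq 0$ (so in particular $i\geq (n-1)/2$); this forces $[a_{i+1}^m]b_{j_m}=0$ for $m\geq 3$ together with $[a_{i+1}^2]b_{j_2}\neq 0$, and the equation $S = 1-\mu+[a_{i+1}^2]b_{j_2} = 0$ again gives $\mu\neq 1$. Any situation in which the lowest surviving monomial corresponded to some $m^*\geq 3$ would force $r\geq m^*K > K$, contradicting $r\leq K$. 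I expect the main obstacle to be no single ingredient but rather the careful bookkeeping of which monomial contributions survive the evaluation $e_c$; once the list of $j_m$'s is in place, both cases conclude by a one-line computation.
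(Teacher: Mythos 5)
Your argument is correct and takes essentially the same approach as the paper: both proofs consider the evaluation sending $a_{i+1}$ to a nonzero constant and all other $a_j$ to zero, determine on degree grounds which monomials of the $b_j$ can survive, and then play $(\dagger)$ against the behaviour of $e^*(q)$ at the nonzero roots of $e^*(p)$ (your quantity $S$ encodes exactly the paper's divisibility test of $x^{2n-2r}+x^{n-r}+e(b_{2r-n-1})$ by $x^{n-r}+1$). The differences are cosmetic: the paper argues by contradiction from $\mu=1$ and splits on the sign of $2i+1-n$, whereas you argue directly by whether $S$ vanishes; and since $S$ is independent of $c$, the one-parameter family is not actually exploited --- the single evaluation $c=1$ (the paper's choice) already carries the whole argument.
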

\begin{proof}
Let $r \in \{1,\ldots,n-1\}$. Assume the statement were not true for $b_{r-1}$.
This implies in particular, that for $e: R_n\to\C$ given by $e^*(p) = x^n + x^r = x^r(x^{n-r} + 1)$,
we have $e(b_{r-1}) = 1$. We will show that this leads to a contradiction.

The polynomial $e^*(p)$ has $k = n - r + 1$ different roots, and so the left-hand side of ($\dagger$)
equals $r - 1$.
Every root but $0$ has multiplicity $1$ and thus contributes at most $1$ to the sum on the right-hand side.
So the right-hand side is less or equal than $n - r + \min\{\mult_0e^*(p), \mult_0e^*(q)\}$, and thus
\begin{align*}
r - 1      & \leq n - r + \min\{n-r, \mult_0e^*(q)\} \\
\Rightarrow\ 2r - n - 1 & \leq \min\{n-r, \mult_0e^*(q)\} \\
\Rightarrow\ 2r - n - 1 & \leq \mult_0e^*(q).
\end{align*}
Hence we have $e(b_i) = 0$ for all $i \leq 2r - n - 2$.
For degree reasons,
\begin{align*}
e(b_i) \neq 0 & \implies \deg a_r | \deg b_i \\
              & \implies (2n - 2r) | (2n - 2i - 2).
\end{align*}
So if $2r - n - 1 < 0$, then $b_{r-1}$ is the only $b_i$ with non-zero evaluation, and we have
$e^*(q) = x^{r-1}(x^{n-r} + 1)$, contradicting ($\dagger$).

If, on the other hand, $2r - n - 1 \geq 0$, then $e(b_{2r-n-1}) \neq 0$ is possible. In that case
\begin{align*}
e^*(q) & = x^{n-1} + x^{r-1} + e(b_{2r - n - 1})x^{2r - n - 1} \\
& = x^{2r-n - 1}(x^{2n - 2r} + x^{n - r} + e(b_{2r - n - 1})).
\end{align*}
We have $\mult_0 e^*(q) = 2r - n - 1$, which in turn implies
that all other roots of $e^*(q)$ must be common roots with $e^*(p)$. 
Hence $x^{n-r} + 1$ divides $x^{2n - 2r} + x^{n - r} + e(b_{2r - n - 1})$, which contradicts $e(b_{2r - n - 1}) \neq 0$.
\end{proof}
\begin{remark}
One may be tempted to think that the hypotheses of the previous lemma are in fact
sufficient to show that
\[
q = \frac{1}{n}\cdot \frac{\partial p}{\partial x}.
\]
But this is not true, and indeed for $n = 3$ we have the following counterexample:
\[
q = \frac{x^3 + 2a_2x^2 + (4a_1 - a_2^2)}{3}.
\]
\end{remark}
\begin{lemma}\label{prop:trefoil}
Let $\partial w = x^n + a_{n-2}x^{n-2} + \ldots + a_0 \in \C[x]$ with $\partial w \neq x^n$,
and let $E$ be the spectral sequence associated to $C_{\partial w}(K)$.
Let $\ell$ be the smallest positive number such that $a_{n - \ell} \neq 0$ (i.e. $2 \leq \ell \leq n$).
\begin{enumerate}\renewcommand{\theenumi}{\roman{enumi}}
\item The pages $E_2, \ldots, E_{\ell}$ are insignificant.
\item For $K = T_{2,3}$, the page $E_{1 + \ell}$ is significant.
\end{enumerate}
\end{lemma}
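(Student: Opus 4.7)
The plan is to prove parts (i) and (ii) separately: part (i) by a graded degree argument in the equivariant setting, part (ii) by an explicit computation of the trefoil's equivariant cohomology combined with \Cref{lem:lin}.

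For part (i), I invoke \Cref{thm:krasner}: the equivariant complex $C_\eqva(D)$ is graded over $R_n[x]/p$, and evaluation via $e: R_n \to \mathbb{C}$ with $e(a_i)$ equal to the given complex coefficient $a_i$ produces the filtered complex $C_{\partial w}(D)$. Writing each matrix entry of the equivariant differential in the $R_n$-basis $\{1, x, \ldots, x^{n-1}\}$ of $R_n[x]/p$, its coefficients are homogeneous polynomials in the formal variables $a_0, \ldots, a_{n-1}$. Any monomial $\prod_j a_{i_j}^{e_j}$ in such a coefficient has $R_n$-degree $\sum_j 2 e_j (n - i_j)$, and this is precisely the amount by which it lowers the quantum filtration after evaluation. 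By hypothesis $a_{n-1} = a_{n-2} = \cdots = a_{n-\ell+1} = 0$, so every monomial surviving evaluation has $n - i_j \geq \ell$ for each factor, forcing total $R_n$-degree at least $2\ell$. Hence the filtered pieces $d_k$ of the differential vanish for $1 \leq k \leq \ell - 1$, giving $E_2 \cong E_3 \cong \cdots \cong E_\ell \cong E_1$ as doubly graded vector spaces.

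For part (ii), I would first compute the equivariant cohomology $H_\eqva(T_{2,3})$; the bipartite structure of the trefoil (\Cref{sec:calc}) makes this a concrete exercise. The expectation is that a suitable summand of $H_\eqva(T_{2,3})$ is controlled by a homogeneous polynomial $q = x^{n-1} + \sum_{i=0}^{n-2} b_i x^i \in R_n[x]$ of degree $2n-2$, in such a way that the $\ell$-th differential on $E_\ell = H_{x^n}(T_{2,3})$ is identified, on that summand, with multiplication by $a_{n-\ell} \cdot q$ on $\mathbb{C}[x]/\partial w$. Direct inspection of the computation should reveal that for some $i$, the coefficient $b_i \in R_n$ contains the monomial $a_{i+1}$ with coefficient $1$. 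By the contrapositive of \Cref{lem:lin}, such a $q$ cannot satisfy hypothesis $(\dagger)$: there exists an evaluation, which we can arrange to be our chosen $\partial w$, for which multiplication by $q$ on $\mathbb{C}[x]/\partial w$ fails to have the maximally possible kernel. This failure translates to a non-vanishing of $d_\ell$ on the relevant cocycle representative, whence $E_{1+\ell} \not\cong E_\ell$ and the page is significant.

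The main obstacle will be locating the polynomial $q$ inside $H_\eqva(T_{2,3})$ and verifying the coefficient structure of the $b_i$ that \Cref{lem:lin} requires. Computing the trefoil's equivariant cohomology in sufficient detail to do this is the technical heart of the proof; once that input is in hand, \Cref{lem:lin} turns the obstruction into a clean algebraic conclusion.
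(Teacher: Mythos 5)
Your argument for part (i) is essentially the paper's: reduce the filtration drop of the evaluated differential to the $R_n$-degree of surviving monomials, and observe that with $e(a_{n-1}) = \cdots = e(a_{n-\ell+1}) = 0$ every non-constant surviving monomial has degree $\geq 2\ell$. The paper handles the degree-zero (constant) parts explicitly by first Gauss-eliminating over $R_n[x]/p$ and then, after forgetting $x$, again over $R_n$, so that all remaining entries are non-constant homogeneous and the cochain groups literally equal $E_1$; your sketch skips this step, but since constant entries just constitute the $E_0$-differential, the conclusion $E_2 \cong \cdots \cong E_\ell$ still follows. Minor point: the key input here is the existence of equivariant $\sl_n$-cohomology (the theorem attributed to \cite{krasnerEquivariant}), not \Cref{thm:krasner}, which concerns the matched tangle.

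Part (ii), however, reverses the logic of \Cref{lem:lin}, and this is a genuine gap. The paper establishes that the polynomial $d = x^{n-1} + \sum_i b_i x^i$ (your $q$) \emph{does} satisfy condition $(\dagger)$: by \Cref{thm:folklore}, $\dim_\C H^2_{e^*(p)}(T_{2,3}) = n-k$ for every evaluation, and by \Cref{lem:jordan} this dimension equals the right-hand side of $(\dagger)$, since the only nontrivial differential in the (Gauss-eliminated) equivariant complex is multiplication by $d$. Condition $(\dagger)$ is thus a topological constraint that always holds — it cannot fail, so the scenario you invoke (``there exists an evaluation for which multiplication by $q$ fails to have the maximally possible kernel'') is impossible. \Cref{lem:lin} is then used \emph{forward}: since $(\dagger)$ holds, the coefficient of $a_{i+1}$ in $b_i$ is \emph{not} $1$, which is the opposite of what you assert. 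The point of this conclusion is concrete: after passing to unreduced cohomology (forgetting $x$), writing the differential $C^2 \to C^3$ as an $n \times n$ matrix over $R_n$ and Gauss-eliminating the constant entry $1$, the first column of the resulting $(n-1)\times(n-1)$ matrix has entries $b_{i-1} - a_i$; the entry $b_{n-\ell-1} - a_{n-\ell}$ has degree exactly $2\ell$, and after evaluation it equals $(c-1)e(a_{n-\ell})$ where $c$ is the coefficient of $a_{n-\ell}$ in $b_{n-\ell-1}$ — nonzero precisely because $c \neq 1$. This is what makes $d_\ell$ non-trivial. Your proposal also omits the necessary passage to the unreduced theory: the lemma concerns the spectral sequence of $C_{\partial w}(K)$, and the non-vanishing must be exhibited in that (unreduced) setting, not just at the level of multiplication on the reduced cochain groups. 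Finally, the identification of the $E_\ell$-differential with ``multiplication by $a_{n-\ell} \cdot q$'' is not accurate: the differential is multiplication by $d$ itself, and the $E_\ell$-page contribution comes from its degree-$2\ell$ piece, which involves a comparison between the $b_i$ and the $a_j$ rather than a simple scalar multiple of $d$.
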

\begin{proof}
For any knot $K$ with a diagram $D$, $C_{\eqva}(D)$ is by Gauss elimination homotopy equivalent to
a complex of free modules whose differentials have matrices 
all of whose entries are non-units in $R_n[x]/p$.
We shall assume that we have performed such a Gauss elimination and we shall abuse notation and write the new complex as $C_{\eqva}(K)$. 

Forgetting the action of $x$ gives a chain complex $\overline{C}_{\eqva}(D)$ of free $R_n$-modules.
One may continue Gaussian elimination as long as possible, arriving at a complex $\overline{C}_{\eqva}(K)$.
Evaluating this chain complex by some $e: R_n \to \C$ gives a chain complex homotopy equivalent to $C_{e^*(p)}$.
All non-vanishing matrix entries in $\overline{C}_{\eqva}(K)$ are homogeneous non-constant polynomials in $R_n$;
so the degree of such an entry is at least $2\ell$.
This implies part (i).

To obtain reduced $\sl_n$-cohomology from $C_{\eqva}(K)$, one may evaluate by the map %
that sends all $a_i$ and $x$ to $0$. So $C_{\eqva}(K)$ has the same Poincar\'{e}-polynomial
as $\widetilde{H}_{x^n}(K)$.
The reduced Homflypt-cohomology
of the trefoil has Poincar\'{e}-polynomial
\[
a^2q^{-2} + t^2a^2q^2 + t^3a^4.
\]
This can be easily computed from the Homflypt-polynomial and the signature, since the trefoil is a two-bridge knot and thus KR-thin.
That also implies that Rasmussen's spectral sequences are all trivial, and hence the reduced $\sl_n$-cohomology
is obtained from the Homflypt-cohomology simply by the regrading $a\mapsto q^n$. It has therefore Poincar\'{e}-polynomial
\[
q^{2n-2} + t^2q^{2n+2} + t^3q^{4n}.
\]
Next, the differential between homological degree $2$ and $3$ is given by multiplication with a polynomial $d\in R_n[x]$
which is homogeneous of degree $2n-2$.
Using \Cref{thm:krasner}, one could compute by hand that $d = \partial p / \partial x$.
Instead, we proceed as follows: let $e:R_n\to\C$ send all $a_i$ to $0$. Applying $e$ to $C_{\eqva}(K)$ and forgetting the action of $x$
will give give unreduced $\sl_n$-cohomology. If $e(d)$ were $0$, then we would have $H_{x^n}(K) = H_{x^n}(U) \otimes \widetilde{H}_{x^n}(K)$,
where $U$ is the unknot. But this is impossible since there is a spectral sequence induced by $C_{x^n-1}(K)$
from $H_{x^n}(K)$ which respects the quantum
degree modulo $2n$ and whose limit is supported in cohomological degree $0$.
Therefore $e(d)\neq 0$, and hence $e(d)$ is a non-zero scalar multiple of $x^{n-1}$.
This gives $\dim_{\C} H^2_{x^n}(K) = n - 1$, and so \Cref{thm:folklore} implies that for all $e:R_n\to\C$,
where $e^*(p)$ has distinct roots $\alpha_1, \ldots, \alpha_k$, we have
\[
\dim_{\C} H^2_{e^*(p)}(K) = n - k.
\]
On the other hand, \Cref{lem:jordan} implies that
\[
\dim_{\C} H^2_{e^*(p)}(K) = \sum_{i=1}^k \min\{\mult_{\alpha_i}\partial w, \mult_{\alpha_i}d\}.
\]
So the hypotheses of \Cref{lem:lin} are satisfied by $d$.

Now let us examine what happens when we pass to unreduced cohomology: this simply means forgetting the action of $x$,
thus obtaining a cochain complex of vector spaces. With respect to the basis $(1, x, \ldots, x^{n-1})$ 
of $\mathbb{C}[x]/\partial w$, the differential between homological degree $2$ and $3$ is an $n\times n$-matrix $M$,
whose $(i,j)$-th entry is the coefficient of $x^{i-1}$ of the unique polynomial of degree at most $n-1$ that equals $x^{j-1}\cdot d$ in $\mathbb{C}[x]/\partial w$.
So the first two columns of $M$ can be computed as (recall that w.l.o.g. we set $a_{n-1}=0$)
\[
\begin{pmatrix}
b_0          & -a_0            &      &  \\
b_1          & b_0 - a_1       &      &  \\
\vdots       & \vdots          &      & \cdots \\
b_{n-3}      & b_{n-4}-a_{n-3} &      &  \\
0            & b_{n-3}-a_{n-2} &      &  \\
1            & 0               &      &
\end{pmatrix}
\]
Applying Gauss elimination to the entry $n$ at $(n,1)$ gives an $(n-1)\times(n-1)$ matrix whose first column is
\[
\begin{pmatrix}
-a_0             &      &  \\
b_0 - a_1        &      &  \\
\vdots           &      & \cdots \\
b_{n-4}-a_{n-3}  &      &  \\
b_{n-3}-a_{n-2}  &      &  \\
\end{pmatrix}
\]
We have already argued that all pages $E_2, \ldots, E_{\ell}$ of the spectral sequence are insignificant for degree reasons.
Now because of \Cref{lem:lin}, the differential on $E_{\ell}$ is non-trivial, and so $E_{1 + \ell}$ is significant.
\end{proof}
\begin{proof}[Proof of {\protect\Cref{cor:existence}}]
For (i), one can take e.g. $x^n + 1$ and  $x^n + x^i + a_0$ for $i\in\{1, \ldots, n-2\}$ and some $a_0 \in\mathbb{C}$ such that the polynomial is separable. By \Cref{prop:trefoil}, the cohomology of the trefoil associated to these potentials have spectral sequences with different significant pages, and are thus pairwise not KR-equivalent.

The second part (ii) follows from the fact that every potential is by \Cref{prop:linear_equivalence} KR-equivalent to
one with $a_{n-1} = 0$. To be KR-equivalent to a Gornik potential, the next significant page of the spectral sequence
needs to be the $(n+1)$-st, and this can only be the case if $a_i = 0$ for all $i > 0$.
\end{proof}
\begin{table}[h]
\begin{tabular}{c|c|c}
 & \multicolumn{2}{c}{\# KR-equivalence classes of separable potentials\ldots} \\[1ex]
$\deg \partial w$ & \ldots of the trefoil & \ldots of $P(5,-3,2)^{\#2}$  \\\hline
$2$ &  1           &  1           \\
$3$ &  2           &  $\geq 3$    \\
$4$ &  4           &  $\geq 6$    \\
$5$ &  8           &  $\geq 10$  \\\hline
$n$ &  $(n-1)! \geq \ldots \geq n-1$  &  $\geq n-1$ \\
\end{tabular}
\vspace{5mm}
\caption{Number of KR-equivalence classes of separable potentials for the trefoil and a more complicated knot;
for the former, classes can be determined precisely by continuing the calculation done in the proof of \Cref{prop:trefoil}.
For the latter, we calculated cohomologies with respect to a large batch of potentials with small coefficients.}
\label{table:KR}
\end{table}
\begin{remark}
We picked the trefoil for ease of calculation, and to demonstrate that even over the simplest non-trivial
knot there are at least $n-1$ different KR-equivalence classes. In fact, the numbers in \Cref{table:KR}
and a close look at the calculations suggest
that the actual number of classes might rather be $2^{n-2}$.

Note that there are non-KR-equivalent potentials that are KR-equivalent over the trefoil:
for example, $x^3-x\sim_{T_{3,2}} x^3 - x - 1$, but $x^3-x\not\sim_{P(5,-3-2)} x^3 - x - 1$.
Hence the differentials of $C_{\eqva}(P(5,-3,-2))$ are not all equal to $\partial w'$.
It would certainly be worthwhile to analyze which forms $C_{\eqva}$ takes in general, or
for certain classes of knots: for example, it could be the case that the equivariant cohomology
of two-bridge knots decomposes as sum of $\mathbb{C}[x]/\partial w$ (in cohomological degree 0) and
several summands of the form
\[
\mathbb{C}[x]/\partial w \xrightarrow{\partial w'} \mathbb{C}[x]/\partial w.
\]

Also, all separable potentials yield the same $E_{\infty}$ page for the trefoil; but it seems
a reasonable conjecture that there are knots (sufficiently complicated and certainly not positive)
for which the different KR-equivalence classes actually yield different $E_{\infty}$-pages.
\end{remark}

\section{Further illuminating examples}
\label{subsec:further_egs}
We have already seen through the example of \Cref{subsec:an_example} that the behavior of Khovanov-Rozansky with a separable potential can be quite unexpected, especially if one's intuition comes from Lee homology and Rasmussen's invariant.  However, the structural results that we have proven in \Cref{sec:lowerbounds} constrain this behavior to some extent.  There are some natural questions concerned with how unruly the invariants can be, and whether one might expect to be able to give much stronger constraints than we have hitherto done.

In this \namecref{subsec:further_egs} we list some of these natural questions and indicate through (computational) examples where the answer lies.

\begin{question}
Are there knots whose sliceness is not obstructed by any of the reduced concordance homomorphisms,
but is obstructed by some of the unreduced concordance invariants?
\end{question}
Let $K = P(9,-7,6) \# P(-7,5,-4)$. Then for all $n\geq 2$, we have $s_n(K) = 0$ \cite{lew2}, so none
of the generalized Rasmussen concordance homomorphisms obstruct the sliceness of $K$.
Neither do any of the reduced concordance homomorphisms we checked.
However, \khoca{} calculates the Poincar\'{e}-polynomial of $H_{x^3-x}(K)$ as $2 + q^2$, which shows that $K$ is not slice.

\begin{question}
In \Cref{gornikniceness}, we have shown that all roots $\alpha$ of a Gornik potential $\partial w$ give the same reduced
concordance homomorphism $\widetilde{s}_{\partial w,\alpha}$. More generally, the symmetry of potentials such
as $x^3 - x$, which is projectively invariant under $x\mapsto -x$, extends to their reduced concordance homomorphisms:
we have $\widetilde{s}_{x^3-x,1} = \widetilde{s}_{x^3-x,-1}$ by \Cref{reducedsymmetry}.
Is it actually true for every potential that all roots give the same reduced concordance homomorphism?
\end{question}
No, for example, we have that $\widetilde{s}_{x^5 - x, 0}(P(5,-3,2)) = 0$, but $\widetilde{s}_{x^5 - x, \alpha}(P(5,-3,2)) = -1/4$
for $\alpha \in \{\pm 1,\pm i\}$, as can be computed with \khoca.

\begin{question}\label{q:shapes}
We have seen that unreduced cohomology does not always have a Poincar\'{e} polynomial of the form $q^{2(n-1)s}\cdot[n]$ with $s\in\mathbb{Z}$.
What shapes does it take? For example, are the generators always in quantum degrees close to each other?
\end{question}
For $K_i = P(5,-3,2)^{\# i}$, we have for all $i \geq 0: \widetilde{s}_{x^5-x,0}(K_i) = 0$, but $\widetilde{s}_{x^5-x,\alpha}(K_i) = -i/4$:
as $i$ grows, so does the distance between the reduced concordance homomorphism of the root $0$, and the other four.
Since the distance between the $\widetilde{s}$ and the unreduced $j_i$ is bounded above by \Cref{prop:relation red unred},
the shape of unreduced $(x^5-x)$-cohomology of $K_i$ is increasingly elongated with growing $i$. And indeed, \khoca{} calculations
for $i = 1,2,3,4$ suggest that its Poincar\'{e} polynomial is $q^{-4} + q^{-1-2i}\cdot[4]$ for all $i\geq 1$.

\begin{question}
	\label{q:quasihom}
\Cref{prop:quasihomeg} shows how to get a quasi-homomorphism from the smooth concordance group to the rationals using unreduced cohomology.
For $H_{x^n - 1}$, this is a homomorphism. Is there any way to define a homomorphism for other potentials?
\end{question}
There may be, but if we take for example $\partial w = x^3 + x + 1$, then it cannot be done in an obvious way, as the following proposition indicates:
\begin{proposition}
Let $j_i(K)$ be defined as in the introduction with potential $x^3 + x + 1$.
Suppose $\varphi$ is a function from the set $\{(x_1, x_2, x_3) \mid x_1 \leq x_2 \leq x_3\} \subset (2\mathbb{Z})^3$ to $\mathbb{R}$,
such that $\varphi^*: K \mapsto \varphi(j_1(K), j_2(K), j_3(K))$ is a concordance homomorphism. Then $\varphi^*$ takes all knots $K$ with
$H_{\partial w}(K) =  q^{2(n-1)s}\cdot[n]$ to zero.
\end{proposition}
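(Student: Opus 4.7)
The plan is to reduce $\varphi^*$ on standard-shape knots to a function of a single variable and then to force that function to vanish.  Let $f(s) := \varphi(4s-2,4s,4s+2)$, so that whenever $H_{\partial w}(K) = q^{4s}\cdot[3]$ we have $\varphi^*(K) = f(s)$; the proposition reduces to showing $f\equiv 0$ on the set of shifts $s$ that are actually realized.

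First I would establish that $f$ is linear.  Evaluating at the unknot gives $f(0) = 0$; \Cref{prop:mirror} combined with $\varphi^*(\overline{K}) = -\varphi^*(K)$ gives $f(-s) = -f(s)$; and since the positive trefoil $T_{2,3}$ is positive, \Cref{thm:homogeneousetc} makes it standard-shape of shift $s_3(T_{2,3})=-1$, while \Cref{positivesum} shows that the connect sum of a standard-shape $K$ of shift $s$ with $T_{2,3}$ is standard-shape of shift $s-1$.  The homomorphism axiom then gives $f(s-1) = f(s) + f(-1)$, and writing $c := -f(-1)$ we obtain $f(k) = kc$ on every integer shift; a parallel argument using standard-shape knots of half-integer shift (in the spirit of \Cref{subsec:example_redux}) extends the formula to $s\in\tfrac{1}{2}\Z$.

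The decisive step is to show $c = 0$.  The idea is to exhibit two standard-shape knots $L$ and $M$ whose connect sum is again standard-shape but whose shift is \emph{not} equal to $s_L + s_M$: by linearity, such an arrangement gives $-c\,s_{L\# M} = \varphi^*(L\# M) = -c(s_L + s_M)$ with the two sides in disagreement, hence $c = 0$.  Equivalently, consider the auxiliary concordance homomorphism $\psi^* := \varphi^* - c\cdot s_3$ (using that $s_3$ is a concordance homomorphism by \Cref{Lobb3thm}); by \Cref{thm:homogeneousetc}, the $x^3+x+1$-shift of any quasi-positive, quasi-negative, or homogeneous knot equals $s_3$, so $\psi^*$ vanishes on all such knots.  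Producing the pair $(L,M)$ above is then the same as producing a standard-shape knot $L$ whose shift genuinely differs from $s_3(L)$, and \Cref{thm:homogeneousetc} combined with the concordance behavior of $L$ forces $\psi^*(L) = 0$, so that $c = 0$.

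The main obstacle is the explicit construction of such an $L$.  This is precisely where the non-Gornik character of $x^3+x+1$ must be exploited: for the Gornik potential $x^3-1$ no such $L$ can exist, since by \Cref{gornikniceness} every knot is standard-shape with shift equal to $s_3$ and indeed the analogous statement of the proposition would be false there (with $s_3$ itself as counterexample).  One expects the required $L$ to be detectable via \khoca{} calculations on a pretzel-type knot, in the spirit of the non-additive elongation phenomenon exhibited by $P(5,-3,2)^{\#i}$ in \Cref{q:shapes}.
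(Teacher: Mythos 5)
Your reduction to the one-variable function $f(s)=\varphi(4s-2,4s,4s+2)$ and the derivation $f(k)=kc$ on integer shifts are sound (oddness from \Cref{prop:mirror}, the recursion from \Cref{positivesum} together with \Cref{thm:homogeneousetc}, and realizability of every integer shift by connect sums of trefoils). But the proof stops exactly where the real content lies: you never produce the example that forces $c=0$, and you say so yourself. Moreover, the two routes you sketch for this step both fail as written. The first requires standard-shape knots $L,M$ with $L\# M$ standard-shape of non-additive shift; you exhibit no such pair, and the phenomenon you cite (the elongation of $H_{x^5-x}(P(5,-3,2)^{\# i})$ from \Cref{q:shapes}) in fact produces connect sums that are \emph{not} standard-shape, so it does not furnish the configuration you need. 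The second route is circular: you invoke \Cref{thm:homogeneousetc} to conclude $\psi^*(L)=0$ for a standard-shape $L$ whose shift differs from $s_3(L)$, but that theorem applies only to quasi-positive, quasi-negative, or homogeneous knots, and any such knot has shift exactly $s_3$ --- so your hypothetical $L$ lies outside its scope and nothing forces $\psi^*(L)=0$.

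The paper closes this gap with a concrete computation and a witness that is \emph{not} standard-shape: for $K=P(7,-5,4)^{\# 2}$ one computes with \khoca{} that $j_i(K)=j_i(K\# K)=-2$ for all $i$, so the homomorphism property gives $2\varphi^*(K)=\varphi^*(K\# K)=\varphi(-2,-2,-2)=\varphi^*(K)$, hence $\varphi(-2,-2,-2)=0$. The mechanism is that $K\# K$ has the \emph{same} $j$-tuple as $K$, not that shifts of standard shapes fail to add. Mirroring gives $\varphi(2,2,2)=0$; since $K\# T_{-2,3}$ again has all $j_i=2$ (by the third bullet of \Cref{thm:homogeneousetc}), one deduces $\varphi^*(T_{-2,3})=0$, i.e. $c=0$ in your notation, and multiples of the trefoil then give vanishing on all integer-shift standard tuples. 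So your strategy is essentially the right one, but without an explicit knot verifying a property of this kind the decisive step remains unproved.
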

\begin{proof}
Let $K = P(7,-5,4)^{\# 2}$. By \khoca-calculations, we have
\[
\forall i\in\{1,2,3\}: j_i(K) = j_i(K\# K) = -2.
\]
Therefore, $\varphi^*(K\# K) = \varphi^*(K) \implies \varphi^*(K) = \varphi(-2,-2,-2) = 0$. By taking the mirror image, we get $\varphi(2,2,2) = 0$ as well.
But we have $\varphi^*(K \# T_{-2,3}) = \varphi(2,2,2) = 0 \implies \varphi^*(T_{-2,3}) = 0 \implies \varphi(2,4,6) = 0$. Therefore $\varphi^*$ sends also
any multiple of $T_{-2,3}$ to zero, and we have $\forall s\in\mathbb{Z}: \varphi(2 + 4s, 4 + 4s, 6 + 6s) = 0$.
\end{proof}

This implies that if one can define such a homomorphism then it must be identically $0$ on all quasi-positive and homogeneous knots, which would be very unusual behavior indeed.  In fact, based on wider calculations of $H_{x^3 + x + 1}$ which we do not report here, it seems very likely that any such homomorphism defined as in the proposition will be identically $0$.

\begin{question}
We have seen the effect on unreduced cohomology $H_{\partial w}(K)$ of taking the connected sum with homogeneous and quasi-positive knots $K'$ in \Cref{thm:homogeneousetc}:
the cohomology $H_{\partial w}(K \# K')$ is just a quantum shift of $H_{\partial w}(K)$.  But perhaps it is not the quasi-positivity or homogeneity of $K'$ that is important, but just the shape of the associated graded vector space to its cohomology (which is that of a shifted unknot).
Is the result more generally true for knots $K'$ with $H_{\partial w}(K') = q^{2(n-1)s}\cdot[n]$, i.e. does $H_{\partial w}(K \# K') = H_{\partial w}(K)[2(n-1)s]$ hold?
\end{question}
As discussed in the answer to \Cref{q:shapes}, we have
\[
H_{x^5-x}(P(5,-3,2)^{\#4}) = q^{-4} + q^{-9}\cdot [4] = q^{-8}\cdot [5],
\]
but e.g.
\[
H_{x^5-x}(P(5,-3,2)^{\#4} \# P(5,-3,2)^{\#4}) = q^{-4} + q^{-17}\cdot [4] \neq q^{-16}\cdot [5].
\]

\begin{question}\label{q:sumdet}
Is $H_{\partial w}(K\# K')$ determined by $H_{\partial w}(K)$ and $H_{\partial w}(K')$?
\end{question}
No -- take $K, K' \in \{T(3,2), P(5,-3,2)^{\#4}\}$ (see the previous Question).

\begin{question}
Is it possible that the reduced concordance homomorphisms arising from degree $3$ polynomials are all just linear combinations of $s_2$ and $s_3$?\footnote{We thank Mikhail Khovanov for raising this question.}
\end{question}
No.  We consider the reduced concordance invariant given by taking the root $x=0$ of the potential $x^3 - x$.  Then computing the invariants for the trefoil knot and the knot $P(5,-3,2)$ one can deduce that if there is such a linear dependence it is of the form:
\[
\reds_{x^3-x,0} = 2s_3 - s_2 {\rm .}
\]
Next, consider the pretzel knot $P(7,-5,4)$.  In \cite{lew2} the first author showed that this knot satisfies $s_2(P(7,-5,4)) = 1$ and $s_n(P(7,-5,4)) = 0$ for any $n>2$.  We can compute the reduced $\sl(3)$ cohomology (using for example \cite{lew1}) and see that in cohomological degree $0$ the cohomology is supported in quantum degree $0$.  Hence in particular $\reds_{x^3-x,0}(P(7,-5,4)) = 0$.  This then shows that $\reds_{x^3-x,0}$ is not in the span of $s_2$ and $s_3$.

\section{Computer calculations}
\label{sec:calc}
\subsection{Bipartite links}
\label{sec:bipartite}
In this section, we consider oriented links with \emph{matched diagrams}, that is to say,
diagrams obtained by gluing together copies of the basic 2-crossing tangle (and its mirror-image) as shown in \Cref{fig:matched}a.
\begin{figure}[t]
(a) \includegraphics{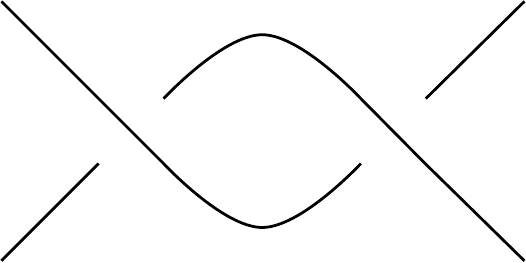}\qquad
(b) \includegraphics{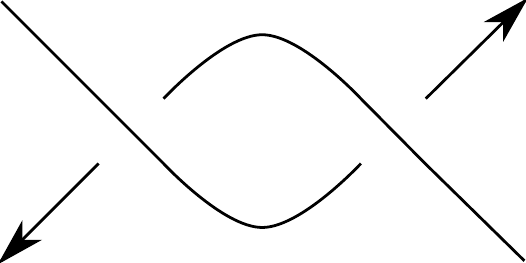}\qquad
\caption{The basic matched tangle, (a) unoriented and (b) oriented.}
\label{fig:matched}
\end{figure}
Such links are called \emph{bipartite links}. If the orientations of the tangles are always as in \Cref{fig:matched}b (or its mirror image),
we call the diagram \emph{orientedly matched} and the link \emph{orientedly bipartite}.
\begin{prop}\label{lem:bip2}
An unoriented matched link diagram $D$ admits an orientation that makes it orientedly matched.
This orientation is unique up to overall reversals of orientations of disjoint diagram components of $D$.
\end{prop}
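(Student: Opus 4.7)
The plan is to first understand the local constraint at a single matched tangle, and then to globally propagate it, separately addressing uniqueness and existence.

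First I would inspect Figure~\ref{fig:matched}(b) to verify that, for the basic matched tangle, the orientations making it orientedly matched form a two-element set whose elements are reverses of each other. In particular, specifying the direction of the orientation at any single one of the four boundary endpoints of the tangle uniquely determines the directions at the other three endpoints. This is the only local fact that will be used in what follows.

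For uniqueness, suppose $\omega_1$ and $\omega_2$ are two orientedly matched orientations of $D$. At each basic tangle $T$, both $\omega_1|_T$ and $\omega_2|_T$ are orientedly matched, so by the local analysis they either coincide or are exact reverses of each other. Let $\mathcal{A}$ be the set of tangles on which they coincide. If a tangle $T \in \mathcal{A}$ shares a boundary endpoint with a tangle $T'$, then $\omega_1$ and $\omega_2$ agree at that endpoint, which by the local analysis forces $\omega_1|_{T'} = \omega_2|_{T'}$, so $T' \in \mathcal{A}$. Hence $\mathcal{A}$ is a union of connected components of the tangle-adjacency graph, equivalently of disjoint diagram components of $D$; on its complement $\omega_2$ is obtained from $\omega_1$ by reversing all orientations, which is the stated freedom.

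For existence, I would propagate an orientation starting from a seed tangle: pick any basic tangle $T_0$ and one of its two orientedly matched orientations. Inductively, whenever a still-unoriented tangle $T$ shares a boundary endpoint with an already-oriented one, assign to $T$ the unique orientedly matched orientation determined (by the local analysis) by the orientation already present at the shared endpoint. Repeat until every tangle is oriented, in each disjoint diagram component of $D$. The main obstacle is to show that the propagation is consistent, i.e.\ that following two different paths from $T_0$ to some tangle $T$ always yields the same orientation of $T$. I would verify this by following an arbitrary link component $C$ of $D$ around its closed path: each passage of $C$ through a tangle corresponds to a specific local direction selected by the orientedly matched condition, and the planarity of the gluing together with the structure of the basic oriented matched tangle ensures that this direction is preserved as $C$ crosses each shared endpoint; returning to the starting tangle along $C$ therefore reproduces the original orientation, which handles the cycles in the tangle-adjacency graph and completes the argument.
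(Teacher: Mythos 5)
Your uniqueness argument is correct and is the natural propagation argument; the paper does not spell it out, but it is implicit. The gap is in your existence argument, and it is a genuine one. You reduce existence to the claim that propagating the seed orientation along any cycle returns the original orientation, and you then assert that ``the planarity of the gluing together with the structure of the basic oriented matched tangle'' ensures this. But that consistency \emph{is} the entire content of the existence statement, and no reason is given for it. (There is no a priori obstruction to a $\pm1$-valued system of constraints around a cycle having a $-1$ holonomy; ruling this out requires an argument, not an appeal to planarity.) Moreover, following a single link component $C$ does not account for all cycles in the tangle-adjacency graph: at each basic tangle the orientedly-matched condition couples \emph{both} strands, and these two strands may belong to different link components, so consistency is not a question about one closed loop.

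The paper's proof is quite different and supplies exactly the missing step. For a knot diagram $D$, it fixes a basic tangle $T$ and observes that the two strands of the complement of $T$ pair up the four endpoints; of the three possible pairings, one is excluded because it would make $D$ a two-component link, and the diagonal pairing is excluded by a parity count of crossings in the complement (the complement has an even number of crossings since every basic tangle contributes two). The remaining pairing forces $T$ to be oriented in the matched sense. Since $T$ was arbitrary, any orientation of the knot is orientedly matched. The multi-component non-split case is then reduced to the knot case by rotating a suitable subset of basic tangles by a quarter-turn to produce a knot diagram $D'$, with a bijection between orientedly-matched orientations of $D$ and of $D'$; split diagrams are handled componentwise. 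To repair your argument you would need to replace the appeal to planarity by a concrete parity or pairing argument of this kind (or some equivalent), since that is where the actual content of the existence claim lies.
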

\begin{proof}
If $D$ is a knot, this is asserted without proof in \cite{bipartite}; and indeed, pick one of the basic tangles: then the two strands in
the complement of the tangle pair up its four endpoints. A priori there are three different pairings possible; but pairing the upper two
endpoints would give a two-component link, and pairing each endpoint with the one diametrically opposed would imply that the complement
of the tangle has an odd number of crossings. So the left endpoints are paired, which implies that the tangle is oriented in the matched sense.

Assume now that $D$ has more than one component and is not split.
Then one can rotate a subset $S$ of the basic tangles constituting $D$ by a quarter-turn,
such that the result is a knot diagram $D'$.
Note that the set of orientations of $D$ that maked $D$ orientedly matched
are in 1-1 correspondence with the orientations of $D'$ that make $D'$ orientedly matched:
the correspondence is given by rotating each tangle in $S$ by a quarter-turn and reversing
its orientation.

If $D$ is split, treat every component separately. 
\end{proof}

Matched diagrams were introduced in \cite{przytycka}
in the context of the Homflypt-polynomial. The authors conjectured that there were non-bipartite knots, a problem
which remained open for 24 years, until it was solved by Duzhin and Shkolnikov \cite{bipartite},
who showed that if a higher Alexander ideal of a bipartite knot contains the polynomial $1+t$, then this ideal must be
trivial. Thus various of 9- and 10-crossing knots are shown to be not bipartite, among them
the $P(3,3,3)$-pretzel knot. In fact, this generalizes to $P(p, q, r)$-pretzel knots
with $p, q, r$ odd and $\lambda = \gcd(p,q,r) > 1$, because their second Alexander ideal is generated by $\lambda$ and $1+t$.
If, on the other hand, $p$ is even, then the $P(p,q,r)$-pretzel knot is bipartite, as we shall prove later on.
\begin{figure}[t]
\includegraphics{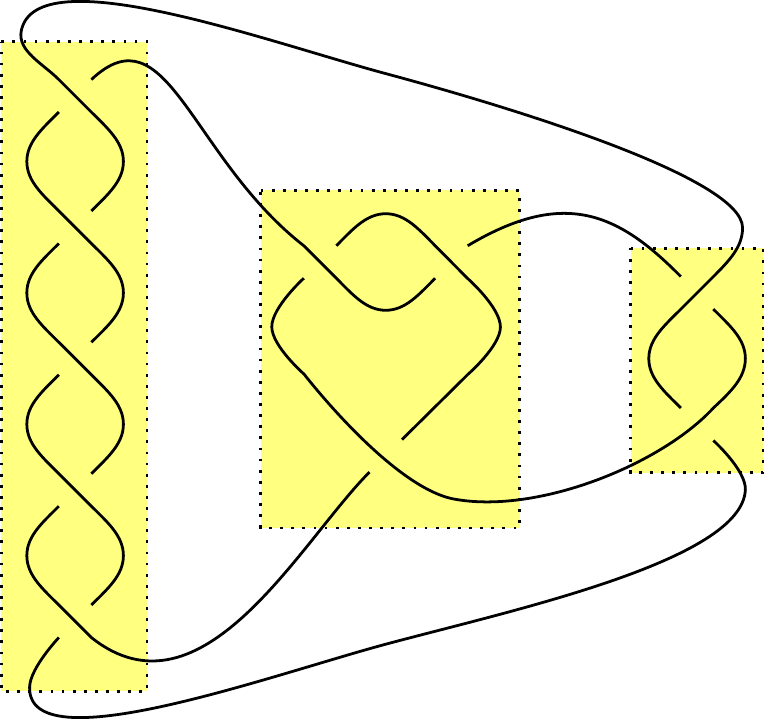}
\caption{The Montesinos-knot $(1/5,2/3,-1/2)$ (which is equivalent to $M(1/5,-1/3,1/2)$, the $(5,-3,2)$-pretzel knot).}
\label{fig:montesinos}
\end{figure}

Our interest in bipartite links is motivated by Krasner's discovery \cite{krasner} that the Khovanov-Rozansky cochain complexes
of the basic oriented matched tangle (\Cref{fig:matched}b), and consequently of orientedly matched diagrams take a particularly simple form:
they are homotopy equivalent to cochain complexes in the TQFT-subcategory -- avoiding MOY-graphs and foams.
In \Cref{thm:krasner}, Krasner's theorem is generalized to equivariant Khovanov-Rozansky cohomologies.

This observation has allowed us to write a computer program called \khoca{} that computes Khovanov-Rozansky cohomologies
of bipartite links. A description is given in the next section.

Duzhin and Shkolnikov prove that rational knots are bipartite; the following is a generalization,
rendering precise a remark of Przytycki's that `half of Montesinos knots should be bipartite'.
A \emph{Montesinos link} is a generalization of pretzel links, where the strands are replaced by \emph{rational tangles} --
see \Cref{fig:montesinos} for an example. Rational tangles up to boundary-fixing isotopy are in one-to-one correspondence with $\mathbb{Q}\cup\{\infty\}$ \cite{conway}.
The rational tangle with twists $a_1, \ldots, a_n$ corresponds to the value of the continued fraction
\[
[a_1, \ldots, a_k] := a_k + \frac{1}{a_{k-1} + \frac{1}{\ldots + a_1}}.
\]
So a Montesinos link may be written as $L = M(p_1/q_1, \ldots, p_n/q_n)$, where $p_i\in\mathbb{Z}, q_i \in \mathbb{Z}^+, (p_i, q_i) = 1$.
In this notation, e.g. $M(3)$ is the trefoil and $M(1/a, 1/b, 1/c)$ the $(a,b,c)$-pretzel link.
Clearly, without changing the isotopy type of $L$ one may insert a $0$ to or remove it from the list of fractions; and
\[
M(p_1/q_1, \ldots, p_n/q_n) = M(\mp 1, p_1/q_1, \ldots, p_i/q_i \pm 1, \ldots, p_n/q_n).
\]
\begin{theorem}\label{thm:montesinos}
Consider the unoriented Montesinos link $L=M(p_1/q_1, \ldots, p_n/q_n)$. %
If $L$ has more than one component, then it is bipartite. %
If $L$ is a knot and one of the denominators $q_i$ is even, then $L$ is bipartite.
\end{theorem}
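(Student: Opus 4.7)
I would reduce the problem to showing that each rational tangle in a suitable Montesinos presentation of $L$ admits a matched diagram, then glue along the Montesinos pattern. The core technical lemma is that a rational tangle with value $p/q$ (in lowest terms) admits a matched tangle diagram whenever $p+q$ is odd: such a diagram corresponds to a continued fraction $[a_1, \ldots, a_k]$ with every $a_i$ even (each $a_i = 2b_i$ supplying $|b_i|$ basic matched $2$-crossing blocks in the $i$-th twist region). A short induction on $k$ proves the forward direction: the base case $[a_1] = a_1/1$ has $p+q = a_1+1$ odd for even $a_1$, and the recursion $[a_1,\ldots,a_{k+1}] = a_{k+1} + 1/[a_1,\ldots,a_k]$ yields new numerator-plus-denominator $(a_{k+1}+1)p+q \equiv p+q \pmod 2$ when $a_{k+1}$ is even. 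Conversely, an ``even Euclidean algorithm'' (at each step subtract the nearest even multiple of $q$ from $p$, then invert) produces such an expansion when $p+q$ is odd; termination is strict because $p = (2m+1)q$ would force $p$ and $q$ to have the same parity, contradicting $p+q$ odd.

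\textbf{Reducing to $p_i+q_i$ odd.} Next I would invoke the equivalence $M(r_1,\ldots,r_n) = M(r_1+k_1, \ldots, r_n+k_n)$ for integer shifts with $\sum k_i = 0$, obtained by iterating the paper's move $M(\ldots,r_i,\ldots) = M(\mp 1,\ldots,r_i \pm 1,\ldots)$ and canceling pairs of $\pm 1$ tangles against $0$-tangles that may then be removed. Writing $r_i+k_i = (p_i + k_i q_i)/q_i$ (still in lowest terms, as $\gcd$ is preserved), the numerator has parity $p_i + k_iq_i \bmod 2$. When $q_i$ is even, $p_i$ is forced odd and the numerator remains odd regardless of $k_i$, so $r_i+k_i$ automatically has odd $p_i'+q_i'$. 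When $q_i$ is odd, one must take $k_i \equiv p_i \pmod 2$. The constraint $\sum k_i = 0$ can always be met if some $q_j$ is even (let $k_j$ absorb any imbalance), whereas if every $q_i$ is odd it forces $\sum p_i$ to be even.

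\textbf{Component count and conclusion.} The final ingredient is the classical formula: for $L = M(r_1,\ldots,r_n)$ with all $q_i$ odd, $L$ is a knot if and only if $\sum p_i$ is odd. This follows by tracking each tangle's strand connectivity --- a rational tangle $p/q$ with $q$ odd connects its four endpoints in one of two ways according to the parity of $p$ --- and counting cycles in the resulting $4$-valent gluing graph of the cyclic Montesinos arrangement. This matches exactly: the parity obstruction $\sum p_i$ odd in the all-$q_i$-odd case occurs precisely when $L$ is a knot with no even $q_i$, the single situation excluded by the theorem. Hence in both asserted cases --- $L$ having more than one component, or $L$ being a knot with some even $q_i$ --- suitable integer shifts exist; applying the rational-tangle lemma to each shifted tangle and gluing along the Montesinos pattern yields a matched diagram, which by \Cref{lem:bip2} can be oriented to be orientedly matched, so $L$ is bipartite.

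\textbf{Main obstacle.} The subtlest point will be verifying the component-count equivalence in the all-$q_i$-odd case, which requires careful bookkeeping of how each tangle's connectivity type propagates around the cyclic Montesinos chain and may need an auxiliary induction on $n$; the rest of the proof is essentially parity arithmetic and a direct application of the continued-fraction lemma.
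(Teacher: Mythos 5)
Your proposal is correct and follows essentially the same route as the paper: reduce each rational tangle to an all-even continued fraction (which the paper simply cites as \Cref{lem:bip1} from Duzhin rather than re-deriving), use integer shifts in the Montesinos presentation to make every numerator-plus-denominator odd, and observe that the parity obstruction to balancing those shifts occurs precisely in the excluded case of a knot with all $q_i$ odd. The bookkeeping differs only superficially --- the paper collects the shifts into a single auxiliary integer tangle $\#A/1$ and then either uses ``$\#A$ even'' or absorbs it into an even-$q_i$ tangle, whereas you distribute $k_i$'s with $\sum k_i = 0$ --- and, like you, the paper states the component-count parity fact without a detailed verification.
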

\begin{lemma}[{\cite[Lemma 2]{duzhin}}] \label{lem:bip1}
If either $p$ or $q$ is even, then $p/q$ can be written as continued fraction
$p/q = [a_1, \ldots, a_k]$
with all $a_i$ even.
\end{lemma}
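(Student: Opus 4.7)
The plan is to establish this by an inductive modification of the standard Euclidean algorithm, where at each step one chooses the nearest \emph{even} integer rather than the floor. Since $\gcd(p,q)=1$ and one of $p,q$ is even, the other must be odd; I will maintain this parity condition as an invariant through the reduction.

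More precisely, I will argue by induction on $|q|$. Given $p/q$ in lowest terms with exactly one of $p,q$ even, the two consecutive even integers bracketing $p/q$ differ by $2$, so at least one of them lies within distance $1$ of $p/q$; call it $a$ and set $r=p-aq$, so $a$ is even and $|r|\leq |q|$. If $|r|=|q|$ then $q\mid p$, which by coprimality forces $q=\pm 1$; this is the base case, in which $p$ is necessarily even and $p/q=\pm p$ is the one-term continued fraction $[\pm p]$, with $\pm p$ even. Otherwise $|r|<|q|$, and I write
\[
\frac{p}{q}\ =\ a+\frac{r}{q}\ =\ a+\cfrac{1}{\;q/r\;}.
\]

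Now the key point is that $r$ has the same parity as $p$ (because $aq$ is even), so exactly one of $q,r$ is even, and moreover $\gcd(q,r)=\gcd(q,p)=1$. Hence $q/r$ is a fraction in lowest terms satisfying the hypothesis of the lemma, but with strictly smaller denominator $|r|<|q|$. By the inductive hypothesis, $q/r=[a_1,\ldots,a_{k-1}]$ with each $a_i$ even, and therefore $p/q=[a_1,\ldots,a_{k-1},a]$ as required, since the convention in the paper places the outermost integer part as the last entry.

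The main (mild) obstacle is simply the bookkeeping: verifying that the parity-preservation invariant really holds at each stage, handling the sign of $r$ correctly (the paper's continued fraction notation allows negative partial quotients, so this causes no issue), and treating the $q=\pm 1$ base case separately. Once these are checked the argument is a direct adaptation of the standard continued fraction algorithm, with ``floor'' replaced by ``nearest even integer''.
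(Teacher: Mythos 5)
The paper does not give its own proof of this lemma; it is cited directly from Duzhin--Shkolnikov, so I can only assess your argument on its own merits. Your overall strategy --- induction on $|q|$, replacing the usual Euclidean step by the nearest \emph{even} integer, and showing that the parity hypothesis propagates to the pair $(q,r)$ --- is sound, and the verifications that exactly one of $q,r$ is even and that $\gcd(q,r)=1$ are correct, as is your handling of the paper's continued fraction convention (with the integer part appearing as the last entry).

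However, you have mis-identified the base case. You trigger it on the condition $|r|=|q|$, but this case is vacuous under the hypotheses: $|r|=|q|$ would force $p/q = a \pm 1$, an \emph{odd} integer, and then coprimality gives $q=\pm1$, whence $p$ and $q$ are both odd --- contradicting the assumption that one of them is even. So that branch is never reached. The case that genuinely terminates the recursion is $r=0$, which occurs precisely when $q\mid p$, i.e.\ $q=\pm 1$; there $p$ is even, $a=p/q$ is the chosen even integer, and $p/q=[a]$. As written, though, $r=0$ falls into your ``$|r|<|q|$'' branch, where you then form the undefined fraction $q/r$. The fix is to structure the induction on $|q|$ directly: if $|q|=1$, take the one-term even continued fraction $[\pm p]$; if $|q|>1$, then $q\nmid p$ so $r\neq 0$, and also $|r|\neq|q|$ by the argument above, so $0<|r|<|q|$ and the inductive reduction to $q/r$ is legitimate. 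With that correction the argument goes through.
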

\begin{proof}[Proof of {\Cref{thm:montesinos}}]
Let $A = \{i \mid p_i, q_i \text{ odd}\}$.
The Montesinos link $L$ is isotopic to 
\[
M(\#A/1, p_1'/q_1, \ldots, p_n'/q_n),
\]
where $p_i' = p_i - q_i$ if $i\in A$ an $p_i'=p_i$ otherwise.
If $L$ has more than one component, and none of the $q_i$ is even, it follows that $\#A$ is even.
If, on the other hand, one of the $q_i$ is even, w.l.o.g. $q_1$, $L'$ is isotopic to
\[
M( (p_1+q_1\cdot \#A)/q_1, p_2'/q_2, \ldots, p_n'/q_n).
\]
So if one of the hypotheses of the \namecref{thm:montesinos} is satisfied, $L$ is isotopic to a Montesinos link who only contains rational tangles whose fractions have even numerator or denominator.
But by \Cref{lem:bip1}, the corresponding rational tangles correspond to a continued fraction $[a_1, \ldots, a_k]$ with all $a_i$ even, and can thus clearly be glued from copies of the basic unoriented matched tangle.
\end{proof}
\begin{lemma}[\cite{krasnerEquivariant}] \label{lem:moy}
In the category of equivariant matrix factorizations
the maps (i), (ii) and (iii) are filtered isomorphisms.
\begin{equation}\LeftEqNo\tag{i}
\bigoplus_{i=0}^{n-1} q^{2i+1-n} \varnothing \xrightarrow{\left(\begin{smallmatrix} c_0 & \cdots & c_{n-1} \end{smallmatrix}\right)} \avcfig{circle},
\end{equation}
where $c_i$ is given by the composition of the following maps:
\[
q^{2i+1-n}\varnothing \xfixed{\iota}
q^{2i}\avcfig{circlex1} \xfixed{x^i}
\avcfig{circle}.
\]
\begin{equation}\LeftEqNo\tag{ii}
\avcfig{digon} \xrightarrow{\left(\begin{smallmatrix} d_0 \\ \vdots \\ d_{n-2}\end{smallmatrix}\right)}
\bigoplus_{i=0}^{n-2} q^{2i+2-n} \avcfig{I}
\end{equation}
where $d_i$ is given by the composition of the following maps:
\[
\avcfig{digon} \xfixed{\chi_1}
q^{-1} \avcfig{ICirclex} \xfixed{x^{n-2-i}}
q^{2i+3-2n} \avcfig{ICircle} \xfixed{\varepsilon}
q^{2i+2-n} \avcfig{I}.
\]
\begin{equation}\LeftEqNo\tag{iii}
\avcfig{square} \xrightarrow{\left(\begin{smallmatrix} s_0 \\ \vdots \\ s_{n-3} \\ * \end{smallmatrix}\right)}
\bigoplus_{i=0}^{n-3} q^{2i+3-n} \avcfig{II} \oplus \avcfig{equals}
\end{equation}
where $s_i$ is given by the composition of the following maps:
\[
\avcfig{square} \xfixed{\chi_1\chi'_1}
q^{-2} \avcfig{IICirclex} \xfixed{x^{n-3-i}}
q^{2i+4-2n} \avcfig{IICircle} \xfixed{\varepsilon}
q^{2i+3-n} \avcfig{II}.
\]
\end{lemma}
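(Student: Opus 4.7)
The plan is to prove the three equivariant MOY isomorphisms by direct matrix factorization calculations, mimicking the original Khovanov--Rozansky proofs but keeping track of the extra variables $a_0, \ldots, a_{n-1} \in R_n$ throughout, and then deducing the isomorphism property from the non-equivariant case via a filtered Nakayama argument. The underlying category is that of $\mathbb{Z}/2$-graded, filtered matrix factorizations over polynomial rings $R_n[x_1, \ldots, x_k]$ with potential $W = \sum_i \pm p(x_i)$; all the maps $\iota$, $\varepsilon$, $\chi_1$, $\chi_1'$, and multiplication by $x^i$ appearing in the statement are the standard TQFT-type MF morphisms, so each $c_i$, $d_i$, $s_i$ is automatically a filtered cochain map of the asserted degree. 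What requires proof is that the resulting matrices of maps are isomorphisms.

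First I would handle (i) directly. The circle with one mark contracts (via Koszul simplification of the closed loop) to the two-periodic complex whose non-trivial cohomology is the Jacobian-type module $R_n[x]/p$. Because $p$ is monic of degree $n$ in $x$, this module is $R_n$-free on $\{1, x, \ldots, x^{n-1}\}$, and tracking the quantum shifts through $\iota$ (degree $1-n$) and multiplication by $x^i$ (degree $2i$) shows that $c_i$ is precisely the map sending the generator of the $i$-th summand to $x^i$. So $\bigl(c_0, \ldots, c_{n-1}\bigr)$ is patently a filtered isomorphism of $R_n$-modules and, hence, of matrix factorizations.

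For (ii) and (iii), I would compute both the digon and the square as tensor products of elementary arc and wide-edge matrix factorizations over $R_n[x_1, x_2]$ (resp. over $R_n[x_1, x_2, x_3, x_4]$), write out the Koszul differentials, and apply Gauss elimination in the MF category (as in \Cref{lem:gauss}) to reduce each to a direct sum of basic MFs. In the digon case one gets precisely the predicted $n-1$ copies of the arc MF, shifted; the maps $d_i$ are identified by the standard recipe: $\chi_1$ produces a circle attached to the surviving arc, multiplication by $x^{n-2-i}$ selects an $R_n$-basis vector of that circle, and $\varepsilon$ removes the circle. In the square case one similarly obtains $n-2$ copies of the two-parallel-arcs MF together with one copy of the ``equals'' (planar crossing-resolution) MF, with the maps $s_i$ identified by the analogous composition of $\chi_1 \chi_1'$, $x^{n-3-i}$, and $\varepsilon$; the extra $(*)$ entry is the unique (up to scalar) cochain map realizing the equals-summand.

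The main obstacle is showing that after the explicit Gauss eliminations, the displayed matrices really do give isomorphisms in the equivariant filtered category, rather than just morphisms whose non-equivariant specializations are isomorphisms. To handle this, I would take the associated graded with respect to the decreasing filtration induced by the degrees of the $a_i$'s; setting $a_0 = a_1 = \cdots = a_{n-1} = 0$ recovers the standard Khovanov--Rozansky category with potential $x^n$, in which the MOY isomorphisms (i)--(iii) are classical. Because both the source and the target of each map are (after Gauss elimination) bounded complexes of free $R_n$-modules of matching ranks, and because the reduction modulo $(a_0, \ldots, a_{n-1})$ yields an isomorphism, a graded Nakayama argument (applied at the augmentation ideal of $R_n$, whose completion is local) lifts this to an isomorphism in the filtered equivariant MF category. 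This step is the one that requires the most care, because the filtration is not by a single variable but by all of the $a_i$'s simultaneously, and one must check that the Gauss-eliminated complexes remain free over $R_n$ in a filtration-respecting way; once this is verified, the three MOY relations follow.
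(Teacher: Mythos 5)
The paper does not give a proof of this lemma: it is attributed to \cite{krasnerEquivariant} and used as a black box in the proof of \Cref{thm:krasner}. So there is no ``paper's own proof'' to compare against; I can only assess the soundness of your sketch.

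Your strategy is plausible and would likely work: handle (i) by identifying the closed-circle factorization's cohomology with the free rank-$n$ module $R_n[x]/p$, redo the Khovanov--Rozansky digon and square reductions keeping the $a_i$'s, and then argue that a graded map whose reduction modulo $(a_0,\ldots,a_{n-1})$ is invertible (this reduction being the classical MOY decomposition at potential $x^n$) must itself be invertible. This Nakayama step does go through, but not quite for the reason you gave: it does not rely on $R_n$ completing to a local ring but simply on the fact that all the variables $a_i$ (and $x$) sit in strictly positive degree, so that any bounded-below graded module $M$ with $M = (a_0,\ldots,a_{n-1})M$ must vanish (look at its lowest nonzero degree). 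Two further imprecisions to clean up: the objects are two-periodic matrix factorizations, not bounded complexes, and the Gauss-eliminated objects are free over the ambient ring $R_n[x_1,\ldots,x_k]$ rather than over $R_n$ itself. Once these are stated correctly, the cokernel- and kernel-vanishing argument yields the isomorphism in each case. Finally, you should verify that the ranks really do match after Gauss elimination (this is exactly what happens at $a_i = 0$, so the Hilbert series agree), and that the maps $c_i$, $d_i$, $s_i$ in the statement are the ones emerging from the explicit reduction rather than merely having the right degrees -- that part still requires tracking the Koszul differentials, as in the original non-equivariant computation.
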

\begin{theorem}\label{thm:krasner}
The following filtered cochain complexes are homotopy equivalent:
\begin{multline*}
C_{\eqva}\Biggl(\,
\avcfig{krasner}\,
\Biggr)
\ \cong\\
\underline{q^{1-n} \avcfig{IIxx}} \xfixed{x_1 - x_2}
q^{-1-n}\avcfig{II} \xfixed{\text{\emph{saddle}}}
q^{-2n}\avcfig{equals}.
\end{multline*}
\end{theorem}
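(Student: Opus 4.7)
The plan is to compute the cube of resolutions of the two crossings in the basic matched tangle, apply the MOY decompositions of \Cref{lem:moy} to each corner, and then use Gauss elimination (\Cref{lem:gauss}) to reduce to the claimed three--term complex.

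First I would write out the cube explicitly. Each of the two crossings has an oriented resolution and a thick--edge resolution, yielding four corners arranged in cohomological degrees $0,1,1,2$. In the matched configuration, the $(0,0)$ corner is the trivial two--strand tangle with two marked points (the ``IIxx'' piece) with the appropriate $q$--shift dictated by the KR conventions for positive crossings; the $(1,1)$ corner is the ``equals'' tangle (both thick--edge resolutions); and the two mixed corners each contain a digon bounded by two thin arcs with matching endpoints. The differentials are the standard Khovanov--Rozansky edge maps $\chi_0$ and $\chi_1$ between the corresponding matrix factorizations.

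Next I would apply \Cref{lem:moy}(ii) to the digons in the two mixed corners, decomposing each as a direct sum $\bigoplus_{i=0}^{n-2} q^{2i+2-n} \,\text{II}$ via the maps $d_i$. Using the explicit formulas for $d_i$ and the compatibility of $\chi_0,\chi_1$ with the MOY isomorphisms, I would express the whole cube as a complex whose cochain groups are $q^{1-n}\text{IIxx}$, two direct sums of shifted II's in homological degree $1$, and $q^{-2n}\text{equals}$. The total rank in the middle degree is $2(n-1)$, while the target complex has rank $1$ there, so Gauss elimination will have to cancel $2n-3$ pairs.

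Then I would perform Gauss elimination on all degree--preserving isomorphisms in the middle. After the MOY decompositions, many of the edge maps between the mixed corners and the outer corners become identifications of matching $q$--graded summands (these are the ``unit'' components coming from the $\chi$ maps combined with the $\varepsilon$ and $\iota$ pieces of \Cref{lem:moy}). Cancelling these via \Cref{lem:gauss} leaves exactly one II in homological degree $1$, and the induced differentials out of $\text{IIxx}$ and into $\text{equals}$ must be tracked through the Gauss elimination formula $k - jh^{-1}i$.

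The main obstacle is identifying the surviving differential from $q^{1-n}\text{IIxx}$ to $q^{-1-n}\text{II}$ as multiplication by $x_1 - x_2$. This requires carefully combining the $x^i$ factors that appear in the maps $c_i$ of \Cref{lem:moy}(i) and the $x^{n-2-i}$ factors appearing in $d_i$ of \Cref{lem:moy}(ii), and matching them against the Koszul--type differentials of the matrix factorizations of the two crossings. The key identity one should obtain is that the composite of all the contributions from the eliminated summands collapses, via the relation $x_1^n - x_2^n = (x_1 - x_2)\sum_{i+j=n-1} x_1^i x_2^j$ inside $\C[x_1,x_2]/(p(x_1),p(x_2))$ (and its equivariant lift), to a single factor of $x_1 - x_2$. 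The surviving differential into the ``equals'' tangle is then just the saddle map inherited from the KR definition, essentially untouched by the elimination. Finally one checks that the $q$--shifts all agree with the stated ones, completing the homotopy equivalence.
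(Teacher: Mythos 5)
Your identification of the four corners of the cube of resolutions is incorrect, and this breaks the argument before it starts. For the oriented matched tangle of \Cref{fig:matched}(b), the oriented resolution at both crossings produces two strands with a small circle between them (the ``IICircle'' picture), not the plain two--strand tangle ``IIxx''; and the thick--edge resolution at both crossings produces a MOY square, not the ``equals'' tangle. Thus the starting complex is
\[
\underline{q^{2-2n}\,\text{IICircle}}\ \longrightarrow\
q^{1-2n}\,\text{digonLeft}\ \oplus\ q^{1-2n}\,\text{digonRight}
\ \longrightarrow\ q^{-2n}\,\text{square},
\]
and the point of the theorem is to simplify this to the IIxx--II--equals complex by MOY decompositions and Gauss elimination. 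You conflated the initial and final forms of the complex: IIxx and equals appear only \emph{after} the reduction, not in the cube.

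This error also makes your plan arithmetically impossible. You propose to decompose only the two digons, leaving degree~$0$ and degree~$2$ with one object each and degree~$1$ with $2(n-1)$ copies of II, and then to cancel $2n-3$ pairs via Gauss elimination. But each elimination pairs a degree--$1$ summand with one in degree~$0$ or degree~$2$, so at most two summands in degree~$1$ can ever be removed. With the correct cube, all four corners must be decomposed: \Cref{lem:moy}(i) splits the IICircle corner into $n$ summands, (ii) splits each digon into $n-1$ summands, and (iii) splits the square into $n-2$ shifted II's plus one ``equals''. That gives ranks $n$, $2(n-1)$, $n-1$, and the $2n-3$ eliminations are then feasible: $n-1$ between degrees $0$ and $1$, and $n-2$ between degrees $1$ and $2$. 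The paper carries these out in two passes by showing the relevant matrices are upper triangular with invertible diagonals, via an explicit computation that the $(i,j)$ entry is $\varepsilon\, x_1^{n-2-i+j}(x_3-x_1)\iota$ (and an analogous expression with $(x_3-x_1)(x_1-x_4)$ for the square), which is zero for $i<j$ and a unit for $i=j$.

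Finally, your strategy for pinning down the surviving differential as $x_1 - x_2$ --- tracking the Gauss--elimination formula $k - jh^{-1}i$ and invoking the factorisation $x_1^n - x_2^n = (x_1-x_2)\sum x_1^i x_2^j$ --- is not what the paper does and is likely to be much harder to push through. The paper sidesteps this by a dimension/closure argument: after the eliminations one knows the cochain groups and that the Hom--spaces $(\text{II},\text{equals})$ and $(\text{II},\text{II})$ in the relevant $q$--degrees are one-- and two--dimensional respectively; closing the tangle off to the unknot and using that the unknot's cohomology is supported in cohomological degree~$0$ then forces the saddle to be nonzero and forces the degree--$0$ differential to be a nonzero multiple of $x_1 - x_2$ (the only thing in its Hom--space annihilated by the saddle). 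A final isomorphism rescales the nonzero constants away. You would do well to adopt this cleaner argument rather than attempting to propagate signs and coefficients through repeated Gauss eliminations.
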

\begin{proof}
We will compose a series of cochain homotopy equivalences to connect the two terms.
By definition,
\begin{multline*}
C_{\eqva}\Biggl(\,
\avcfig{krasner}\,
\Biggr)
\ = \\
\underline{q^{2-2n} \avcfig{IICircle}}\ \xfixed{\left(\begin{smallmatrix}d_1\\ *\end{smallmatrix}\right)}\ 
\begin{matrix}
q^{1-2n}\avcfig{digonLeft}\\[2ex] \oplus \\[1ex]
q^{1-2n}\avcfig{digonRight}
\end{matrix}
\ \xfixed{\left(\begin{smallmatrix}* & e_1\end{smallmatrix}\right)}\ 
q^{-2n}\avcfig{square}.
\end{multline*}
Here and later, a star ($*$) indicates a map that we do not need to know.
To start, replace the circle in cohomological degree 0 and the first digon in cohomological degree 1
using the respective MOY-decompositions.
This leads to $d_1$ being replaced by an $(n-1)\times n$-matrix $d_2$.
For $0 \leq i \leq n-1$ and $0 \leq j \leq n-2$, its $(i+1,j+1)$-entry is a map 
from $q^{3-3n+2j}\,\avcfig[scale=0.3]{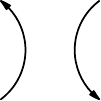}$ to $q^{3-3n+2i}\,\avcfig[scale=0.3]{II}$ given by the following composition (dotted line):

\[\xymatrix{
q^{3-3n+2j} \avcfig{II}
\ar@{..>}[d]
\ar[r]^-{\iota} &
q^{2-2n+2j} \avcfig{IICirclex1}
\ar[r]^-{x_1^j} &
q^{2-2n} \avcfig{IICircle}
\ar[rd]^-{d_1} \\
q^{3-3n+2i} \avcfig{II} &
\ar[l]^-{\varepsilon} 
q^{4-4n+2i} \avcfig{IICircle} &
\ar[l]^-{x_2^{n-2-i}} 
q^{-2n} \avcfig{IICirclex2} &
\ar[l]^-{\chi_1} 
q^{1-2n} \avcfig{digonLeft}
}\]

We have $\chi_1 d_1 = (x_3 - x_2)$, and thus the whole map equals $\varepsilon x_1^{n-2-i+j}(x_3-x_1)\iota$.
This is clearly equal to a multiple of the identity of $\avcfig[scale=0.3]{II}$ if $i = j$.
On the other hand, if $i < j$, then this map is zero. Thus $d_2$ is an upper triangular matrix whose main diagonal
consists of isomorphisms. Therefore the submatrix obtained by deleting the last column is invertible. So using Gauss elimination,
the cochain complex is homotopy equivalent to 
\[
\underline{q^{1-n} \avcfig{IICircle}}\ \xfixed{*}
q^{1-2n}\avcfig{digonRight}
\ \xfixed{e_1}\ 
q^{-2n}\avcfig{square}.
\]

To proceed, use MOY-decompositions again, to replace the remaining digon and the square.
For the digon, we will use the dual of the map given in \Cref{lem:moy} (ii).
In this way, $e_1$ is replaced by a $(n-1)\times (n-1)$-matrix $e_2$.
Let us ignore the last row and last column, and denote by $e_2'$ the corresponding submatrix.
For $i, j \in \{0,\ldots n-2\}$, its $(i+1,j+1)$-entry
is a map from $q^{3-3n+2j} \avcfig[scale=0.3]{II}$ to $q^{3-3n+2i} \avcfig[scale=0.3]{II}$, given by the following composition (dotted line):
\[\scalebox{0.98}{\xymatrix{
q^{3-3n+2j} \avcfig{II}
\ar@{..>}[d]
\ar[r]^-{\iota} &
q^{2-2n+2j} \avcfig{IICirclex1}
\ar[r]^-{x_1^j} &
q^{2-2n} \avcfig{IICircle}
\ar[r]^-{\chi_0} &
q^{1-2n} \avcfig{digonRight}
\ar[d]^-{e_1}\\
q^{3-3n+2i} \avcfig{II} &
\ar[l]^-{\varepsilon}
q^{4-4n+2i} \avcfig{IICircle} &
\ar[l]^-{x_2^{n-3-i}} 
q^{-2-2n} \avcfig{IICirclex234} &
\ar[l]^-{\chi_1\chi_1'} 
q^{-2n} \avcfig{square}
}}\]

Because $\chi_1\chi_1'e_1\chi_0 = (x_3 - x_2)(x_2 - x_4)$, the whole map equals $\varepsilon x_1^{n-3-i+j}(x_3 - x_1)(x_1 - x_4) \iota$.
As before, this is a non-zero multiple of the identity for $i = j$, and vanishes for $i < j$.
Hence $e_2'$ is an invertible submatrix of $e_2$. By Gauss elimination, our cochain complex is homotopy equivalent to
\[
\underline{q^{1-n} \avcfig{IIxx}} \xfixed{d_3}
q^{-1-n}\avcfig{II} \xfixed{e_3}
q^{-2n}\avcfig{equals}.
\]

To determine the maps, note that the Hom-space $(\avcfig[scale=0.3]{II}, \avcfig[scale=0.3]{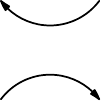})$
is one-dimensional in the $q$-degree in question. Thus $e_3$ is a multiple of the saddle.
Now, close off the original tangle to the unknot. The cohomology of
the unknot has support in cohomological degree $0$; but if $e_3$ were $0$, the above complex
would have cohomology in cohomological degree $2$ after closing off. Hence $e_3$ is a non-zero multiple
of the saddle.

The Hom-space $(\avcfig[scale=0.3]{II}, \avcfig[scale=0.3]{II})$ of the $q$-degree
in question is two-dimensional, but only the subspace generated by $(x_1 - x_2)$ yields $0$ when
composed with the saddle. Closing off as before, we see that the dimension of the first cochain group
is strictly greater than the dimension of the second. So, for the first cohomology group
to vanish, $d_3$ needs to be non-zero. Thus $d_3$ is a non-zero multiple of $(x_1 - x_2)$.

A final isomorphism of cochain complexes may be used to do away with the non-zero factors.
\end{proof}
\begin{remark}\label{rmk:z}
To our knowledge, there is no integral Khovanov-Rozansky cohomology theory yet
that is defined for arbitrary tangles (not just pieces of braids as 
in \cite{krasnerIntegral}).
But if such a theory can be defined based on the Khovanov-Rozansky cube of singular resolutions, it is likely to satisfy \Cref{thm:krasner}.

In particular, \Cref{thm:krasner} gives a complex associated to a matched diagram that is defined over the integers.  Hence one can compute a cohomology theory $H$ over the integers for matched diagrams and thus make conjectural computations of the as-yet-undefined integral Khovanov-Rozansky cohomology.
\end{remark}

\subsection{A computer program}
\label{subsec:compu}
\label{sec:examples}
Although there is a variety of computer programs doing computations
in Khovanov-Rozansky cohomologies, none of them can quickly calculate
$\mathfrak{sl}_n$-cohomologies for small $n$ of small
knots.
One reason is the difficulty of implementing the calculus
of MOY-graphs and matrix factorisations (or some other formalism
describing the differentials) on the tangle level, which is necessary
for Bar-Natan's divide-and-conquer algorithm \cite{fastcompu}.
\Cref{thm:krasner} shows that to compute the cohomology of \emph{bipartite}
knots, a computer program only needs to do calculations in the TQFT-category
of tangles and cobordisms. This category is much easier for a computer program.
While not all knots are bipartite, and regrettably most torus knots appear not to be,
there are still enough bipartite knots which are not two-bridge and have interesting Khovanov-Rozansky cohomologies,
notably the odd-odd-even pretzel knots, which are our main source of examples.

Our program \khoca{} calculates unreduced and reduced $\mathfrak{sl}_n$-cohomology (including all pages of the spectral sequence) of bipartite knots,
for arbitrary potentials of arbitrary degree, over the complex numbers, integers and finite prime fields
(beware: for $n \geq 4$, the results over integers and prime fields have not been proven correct, cf. \Cref{rmk:z}).
Thanks to the divide-and-conquer algorithm (and implementation details such as sparse matrices and multiprocessing)
it does so in reasonable time, e.g. the calculation of $P(11,-9,8)$ (a 28-crossing knot) over some random potential of degree $5$ over the integers takes
five minutes.
Some of the examples calculated with \khoca{} can be found in \Cref{subsec:bunch}.
The program will shortly be made publicly available \cite{khoca-pub}.

\section{Outlook}
Throughout the text, we have worked over the complex numbers.
However, we expect our results to generalize to
\textbf{cohomologies over finite fields}, yielding different slice genus lower bounds.

The \textbf{knight move conjecture} arose quickly after Khovanov cohomology \cite{garoufalidis,dror},
but is still open; phrased in the language of this article, it simply states that
the spectral sequence of $C_{x^2-1}$ (over the complex numbers) collapses on the third page (which is the first significant
page after $E_1$). There is some weak evidence against the conjecture: no `reason why it should be true' is known,
and the lack of a counterexample could simply come from our limited ability to calculate cohomology
of large knots. Moreover, generalizations of the conjecture fail: e.g., the spectral sequences of $C_{x^2-1}(T(7,8))$
over $\mathbb{F}_7$, $C_{x^2-1}(T(6,7))$ over $\mathbb{F}_3$ or $C_{x^3 - 1}(T(5,6))$ over $\mathbb{F}_5$
collapse only on the \emph{second} significant page after $E_1$ (\cite{fastcompu}, and calculations with \texttt{foamho} \cite{foamho}).
Nevertheless, it might be noteworthy that all small knots $K$ that we considered displayed the following behavior:
let $E$ be the spectral sequence of $C_{\partial w}(K)$ (over $\mathbb{C}$), then $E_{2\deg\partial w - 1} = E_{\infty}$.

There is a \textbf{new potential topological application} of the invariants:
we have seen that the sliceness obstructions arising from unreduced cohomologies are not all equivalent to concordance homomorphisms.
So they could potentially be used to prove the non-sliceness of a knot that represents torsion in the concordance group,
such as an amphichiral knot.  We do not know, for example, of a reason why for some amphichiral knot $K$ and
some separable potential $\partial w$, we could not have
\begin{align*}
H_{\partial w}(K) & = 3 & \text{where $\deg\partial w = 3$,} \\
\text{or}\quad H_{\partial w}(K) & = 2q^{-1} + 2q &\text{where $\deg\partial w = 4$}.
\end{align*}
Note that either of these is in accordance with \Cref{prop:relation red unred} and \Cref{prop:mirror},
and obstructs sliceness since $H_{\partial w}(U) = [\deg\partial w]$.
In contrast, invariants such as knot signatures or slice-torus invariants must necessarily vanish
on such knots.

\bibliographystyle{myamsalpha}
\bibliography{main}
\end{document}